\newtheorem{theorem}{Theorem}
\newtheorem{lemma}[theorem]{Lemma}
\newtheorem{corollary}[theorem]{Corollary}
\newtheorem{definition}[theorem]{Definition}
\newtheorem{remark}[theorem]{Remark}
\newtheorem{example}[theorem]{Example}
\numberwithin{theorem}{section}
\numberwithin{equation}{section}
\begin{document}

\title[Effect of a small bounded noise on hyperbolicity]{The effect of a small bounded noise on the hyperbolicity for autonomous semilinear differential equations}

%    author one information
\author[T. Caraballo]{Tom\'as Caraballo$^1$}
%\address{(T. Caraballo and J.A. Langa) Departamento de Ecuaciones Diferenciales y An\'alisis Num\'erico, Universidad de Sevilla, Spain.}
\email{caraball@us.es}
\thanks{$^1$ Departamento de Ecuaciones Diferenciales y An\'alisis Num\'erico, Universidad de Sevilla, Spain.}
%\thanks{$^1$ Partially supported by Spanish Ministerio de Econom\'{i}a y Competitividad and FEDER, project PGC2018-096540-B-I00, and Proyecto I+D+i Programa Operativo FEDER Andaluc\'{\i}a US-1254251.}
%  author two infortmation
\author[A. N. Carvalho]{Alexandre N. Carvalho$^2$}
%\address{(A. N. Carvalho and A. N. Oliveira Sousa) Instituto de Ci\^encias Matem\'aticas e de Computa\c c\~ao, Universidade de S\~ ao Paulo, Brazil.}
\thanks{$^2$ Instituto de Ci\^encias Matem\'aticas e de Computa\c c\~ao, Universidade de S\~ ao Paulo, Brazil.}
%\thanks{$^{2}$ Partially supported by FAPESP grant  2018/10997-6 and CNPq grant  303929/2015-4.}
\email{andcarva@icmc.usp.br}
%    	author three information

\author[J. A. Langa]{Jos\'e A. Langa$^1$}
\email{langa@us.es}
%\thanks{}
%		author four information

\author[A. N. Oliveira-Sousa]{Alexandre N. Oliveira-Sousa$^{2}$}
\email{alexandrenosousa@gmail.com}
%\thanks{$^{3}$ Has been supported by FAPESP grants 2018/10633-4 and 2017/21729-0.}

\subjclass[2010]{Primary 37B55, 37B99, 34D09, 93D09}

\keywords{nonuniform exponential dichotomy, robustness, permanence of hyperbolic equilibria}
\date{}
\begin{abstract}
	In this work we study permanence of hyperbolicity for autonomous differential equations under nonautonomous random/stochastic perturbations. For the linear case, we study 
	robustness and existence of exponential dichotomies for nonautonomous random dynamical systems. Next, we establish a result on the persistence of hyperbolic equilibria for nonlinear differential equations. We show that for each nonautonomous random perturbation of an autonomous semilinear problem with an hyperbolic equilibrium there exists a bounded \textit{random hyperbolic solution} for the associated nonlinear nonautonomous random dynamical systems. Moreover, we show that these random hyperbolic solutions converge to the autonomous equilibrium.
	As an application, we consider a semilinear differential equation with a small nonautonomous multiplicative white noise, and as an example, 
	we apply the abstract results to a strongly
	damped wave equation.
\end{abstract}

\maketitle

\section{Introduction}
The study of permanence properties in dynamical systems has been widely developed in the past decades (see
\cite{Arnold-Boxler,Arnold-Crauel-1,Arnold-Kloeden,Caraballo-Colucci-Cruz,Caraballo-Kloeden-Schmalfu,Caraballo-Langa,Carvalho-Langa-2,Chow-Leiva-existence-roughness,Chow-Leiva-existence-unbounded,Bixiang-Wang-existence-stability,Bixiang-Wang-existence-upper,Yao-Deng,Yao-Zhang,Zhou-Lu-Zhang-1} and references therein).
Some of these works dealt with \textit{exponential dichotomies}, which corresponds to the notion of 
\textit{hyperbolicity} in the non-autonomous framework and gives, for each time,
a decomposition of the space into two parts, one along which solutions decay exponentially to zero forwards, and another along which solutions decay exponentially to zero backwards.
In several of these papers they proved robustness of exponential dichotomies for deterministic 
nonautonomous dynamical systems \cite{Chow-Leiva-existence-roughness,Chow-Leiva-existence-unbounded,Henry-1}, and 
random dynamical systems \cite{Barreira-Dragicevi-Valls,Zhou-Lu-Zhang-1}. 
\par Our purpose is to establish robustness results of exponential dichotomies for \textit{nonautonomous random dynamical systems}. First, we extend the concept of exponential dichotomies 
to encompass random and nonautonomous dynamical systems and provide conditions to guarantee that it persists under perturbation. Then, we apply these abstracts results to study permanence of hyperbolicity on nonautonomous random semilinear differential equations obtained by small perturbations of an autonomous problem.
\par In this way, we consider a autonomous semilinear problem in a Banach space $X$
\begin{equation}\label{newint-autonomous-differential-equation}
\dot{y}=\mathcal{A}y+f_0(y), \ \  t>0, \ y(0)=y \in X,
\end{equation} 
and nonautonomous random perturbations of it
\begin{equation}\label{newint-eq-nonautonomous-random-perturbation}
\dot{y}=\mathcal{A}y+f_\eta(t,\theta_t\omega,y), \  t>\tau, \ y(\tau)=y_\tau \in X, \ \eta\in (0,1],
\end{equation}  
where $\mathcal{A}$ generates a strongly continuous semigroup $\{e^{\mathcal{A}t}: t\geq 0\}\subset \mathcal{L}(X)$, $\theta_t:\Omega\to \Omega$ is a random flow defined in a probability space $(\Omega,\mathcal{F},\mathbb{P})$. 
We assume that there exist a hyperbolic equilibrium for \eqref{newint-autonomous-differential-equation} $y^*_0$, i.e.,
$y_0^*$ is such that 
$f(y_0^*)=-\mathcal{A}y$, and the linearized problem 
$\dot{z}=\mathcal{A}z+f^\prime(y_0^*)z$ admits an exponential dichotomy. Then, we provide conditions to prove \textit{existence and continuity} of ``hyperbolic equilibria" for \eqref{newint-eq-nonautonomous-random-perturbation}. In fact, we show that for each small perturbation $f_\eta$ of \eqref{newint-autonomous-differential-equation}, there exists a global solution of \eqref{newint-eq-nonautonomous-random-perturbation} $\xi_\eta^*$ that presents an \textit{hyperbolic behavior} (which means that
the linear nonautonomous random dynamical system generated by 
\begin{equation*}
\dot{y}=\mathcal{A}y+D_yf_\eta(t,\theta_t\omega,\xi_\eta^*(t,\theta_t\omega))y, \ t\geq \tau,
\end{equation*}
admits an exponential dichotomy), and that these hyperbolic solutions $\xi_\eta^*$ converges to $y^*_0$, as $\eta\to 0$. 
\par To prove this result on the existence and continuity of ``hyperbolic equilibria" for semilinear differential equations, we need first to guarantee permanence of hyperbolicity
for linear nonautonomous random dynamical systems, which can be interpreted as follows: suppose that $f_\eta(t,\theta_t\omega,\cdot):=\mathcal{B}_\eta(t,\theta_t\omega)\in \mathcal{L}(X)$ is a linear perturbation of $\dot{x}=\mathcal{A}x$, and that the autonomous evolution process generated by $\mathcal{A}$ admits an exponential dichotomy, then we prove the linear co-cycle generated by
\begin{equation}\label{newint-linear-nonautonomous-diffeential-equation}
\dot{x}=\mathcal{A}x+\mathcal{B}_\eta(t,\theta_t\omega)x,
\end{equation} 
admits an exponential dichotomy,
under the assumption that $B(t,\theta_t\omega)$ is uniformly small (on time). 
We prove the robustness result via ``discretization'', i.e., we first prove the robustness for the discrete case, then we provide some ``connecting" results between discrete and continuous dynamical systems, and we use these results to establish the robustness result for continuous nonautonomous random dynamical systems.   
\par As an application of these results, we consider a family of stochastic differential equations with a nonautonomous multiplicative white noise
\begin{equation}\label{eq-newintroduction-stochastic}
dy=\mathcal{A}ydt+f_0(y)dt+\eta\kappa_t y\circ dW_t, \ t\geq \tau, \ y(\tau)=y_\tau\in X,
\end{equation}
where $\eta\in [0,1]$, and the mapping $\mathbb{R}\ni t\mapsto \kappa_t\in \mathbb{R}$ is a real function. We use a formal change of variables to obtain a family of nonautonomous random differential equations like \eqref{newint-eq-nonautonomous-random-perturbation}. As an example, we consider 
a damped wave equation with
a nonautonomous multiplicative white noise
\begin{equation}\label{eq-newint-damped-wave}
\left\{
\begin{array}{l l}
u_{tt}+\beta u_t-\Delta u=f(u)+\eta\kappa_t u\circ \dot{W}, &  x\in D,\\
u(0,x)=h_1(x), \ u_t(0,x)=h_2(x), & x\in D,\\
u(t,x)=0, & x\in \partial D, \  t>0.
\end{array}
\right.
\end{equation}
\par This work was motivated by the following stochastic differential equation, with a small (autonomous) multiplicative white noise,
\begin{equation}\label{new-eq-autonomous-noise}
dy_t=Aydt+\epsilon y\circ dW_t, \ t\geq \tau, \ y(\tau)=y_\tau\in X.
\end{equation}
where $\dot{y}=Ay$ admits an exponential dichotomy, the family $\{W_t:t\in \mathbb{R}\}$ is the standard Wiener process, and
$\epsilon>0$.
Consider a stochastic process
$z(t,\omega)=z^*(\theta_t \omega)$ known as the \textit{Orstein-Uhlenbeck process} (which will be describe later),
where $\omega$ is a parameter in the probability space $(\Omega,\mathcal{F},\mathbb{P})$, and
$\{\theta_t: t\in \mathbb{R}\}$ is a group on $\Omega$.
By a standard procedure, we define 
$v(t,\omega)=e^{-\epsilon z^*(\theta_t\omega)}y(t,\omega)$,
that satisfies
\begin{equation}\label{newintroduction-linear-stochastic}
\dot{v} =Av
+\epsilon z^*(\theta_t\omega)v,
\end{equation}
which is a (autonomous) random differential equation as a perturbation of an autonomous problem. However, for each random parameter $\omega\in \Omega$, our perturbation $B_\epsilon(\theta_t\omega):=\epsilon z^*(\theta_t\omega)$ have some sub-linear growth on $t$, which brings some fundamental difficulties to prove robustness, because we cannot assume that operator $B(\theta_t\omega)$ is uniformly bounded in time. 
Therefore, when 
$\dot{v}=Av$ is hyperbolic it is not possible to guarantee that \eqref{newintroduction-linear-stochastic} admits an exponential dichotomy, in the sense of \cite{Barreira-Dragicevi-Valls,Zhou-Lu-Zhang-1}, for $\epsilon>0$ small enough.
To deal with this issue we propose ``to bound'' the noise with respect to time.
\par Bounded noise is very sensible in real life applications, \cite{Caraballo-Colucci-Cruz,Caraballo-Garrido-Cruz,Carr,Onofrio,Yao-Deng}. For instance,
in \cite{Caraballo-Garrido-Cruz} the authors consider an Ornstein-Uhlenbeck process depending on a control parameter that allows them to ensure
that the noise is inside a bounded interval, which is fixed by practitioners based on experiments. They use such a bounded noise to model perturbations on the input flow of a chemostat. This idea is generalized in \cite{Caraballo-Colucci-Cruz}, where several examples in population dynamics with bounded random fluctuations are given. 
In this work, we ``bound" the noise in the following way:
\par Since the stochastic process $z^*$ satisfies 
$\lim_{t\to \pm\infty}|z^*(\theta_t\omega)/t|=0$, for almost all $\omega$, see \cite{Caraballo-Kloeden-Schmalfu},
it is possible to choose any differentiable positive real function $\kappa$ such that 
there is a random variable $m>0$ such that
\begin{equation*}
m(\omega):=\sup_{t\in \mathbb{R}}\big\{|\kappa_tz^*(\theta_t\omega)|+
|\dot{\kappa}_tz^*(\theta_t\omega)|\big\}<+\infty.
\end{equation*}
Hence, instead of \eqref{new-eq-autonomous-noise}, we consider
\begin{equation}
dy_t=Aydt+\epsilon\kappa_t y\circ dW_t,
\end{equation}\label{new-eq-nonautonomous-noise}
where $\kappa_t$ is a real function that allows us to control the white noise on time. By the change of variables
$v(t,\omega)=e^{-\epsilon\kappa_tz^*(\theta_t\omega)}y(t,\omega)$, we obtain
\begin{equation}\label{new-introduction-linear-random}
\dot{v} =Av
-\epsilon [\dot{\kappa}_t z^*(\theta_t\omega)-\kappa_t z^*(\theta_t\omega)]v,
\end{equation}
which is a linear nonautonomous random differential equation.
Since the perturbation
$B_\epsilon(t,\theta_t\omega):=\epsilon [\dot{\kappa}_t -\kappa_t ]z^*(\theta_t\omega)$ is 
uniformly bounded on time, we can develop a theory of exponential dichotomies to guarantee existence of hyperbolicity for \eqref{new-introduction-linear-random}.
\par Historically, to study permanence of properties under perturbations it is sensible to assume that the perturbation is uniformly bounded on time, see \cite{Arnold-Boxler,Arnold-Crauel-1,Arnold-Kloeden,Barreira-Dragicevi-Valls,Bortolan-Cardoso-etal,Carvalho-Langa-2,Henry-1}. For instance, if $\dot{y}=Ay$ is \textit{hyperbolic}, and $B:\mathbb{R}\to \mathcal{L}(X)$ is uniformly bounded with respect of $t$, then the hyperbolicity persists on $\dot{y}=Ay+B(t)y$, see \cite{Henry-1}.
Also, for a differential equation on a Banach space, driven by a group $\theta_t:\Sigma\to\Sigma$ on a compact Hausdorff set $\Sigma$, $\dot{y}=A(\theta_t\sigma)y$, exhibiting an exponential dichotomy, \cite{Chow-Leiva-existence-roughness} proved that the hyperbolicity persists for $B:\Sigma\to \mathcal{L}(X)$, which is naturally uniformly bounded by the compactness of $\Sigma$, and in \cite{Chow-Leiva-existence-unbounded} they considered an the case that $B$ is a unbounded operator, but also with some uniformly boundedness condition on time. 
\par More recently, they consider the random case, in \cite{Barreira-Dragicevi-Valls,Zhou-Lu-Zhang-1} it was studied stability of tempered exponential dichotomies for random difference equations, $y_{n+1}=A(\theta\omega)y_n$, where $\theta :\Omega\to \Omega$ is a random flow defined on a probability space $(\Omega,\mathcal{F},\mathbb{P})$, but usually in the situation the assumption on the perturbation $B$ are more restricted because has to be exponentially small, typically $\|B(\theta_n\omega)\|\leq \delta(\omega)e^{-\nu|n|}$. Therefore, all these results on robustness assume that the perturbation are uniformly bounded on time. However, as seen before, we cannot assume that even a small noise is satisfy such property. For this reason, to prove permanence of hyperbolicity we have to consider a time dependent scalar function $\kappa_t$ to compensate the 
the growth of the noise in time, \eqref{new-eq-nonautonomous-noise}.
\par Accordingly, we consider a small nonautonomous random perturbation $B:\mathbb{R}\times \Omega\to \mathcal{L}(X)$ of a hyperbolic problem 
$\dot{y}=Ay$. 
Our perturbation $B(t,\omega)$ depends on two parameters, 
the time $t$ of deterministic nature, and another $\omega$ varying in a probability set $(\Omega,\mathcal{F},\mathbb{P})$. This leads us to establish robustness results of exponential dichotomies for \textit{nonautonomous random dynamical systems}, which is a co-cycle $(\varphi,\Theta)$ driven by $\Theta:\mathbb{R}\times\Omega\to \mathbb{R}\times\Omega$. 
Hence, using similar techniques to those in \cite{Zhou-Lu-Zhang-1} (for the discrete case), and \cite{Chow-Leiva-existence-roughness} (for the continuous case), we establish stability results, and applied these results to guarantee existence of exponential dichotomy for  
$\dot{y}=Ay+B(\Theta_t(\tau,\omega))y$, where $B$ is uniformly small (with respect to $t$), and $\dot{x}=Ax$ is hyperbolic.
\par For the nonlinear case, inspired by \cite{Carvalho-Langa-2}, considering 
a small nonautonomous bounded noise as in \eqref{eq-newintroduction-stochastic} we 
provide conditions that allow us to prove existence and continuity of \textit{random hyperbolic solutions} for \eqref{newint-eq-nonautonomous-random-perturbation}. 
In \cite{Carvalho-Langa-2} they study existence and continuity of hyperbolic solutions, invariant manifolds, and provide conditions to prove continuity of pullback attractors for evolution processes. This work was crucial for further development on continuity and stability of attractors in a deterministic scenario
\cite{Bortolan-Cardoso-etal,Bortolan-Carvalho-Langa,Carvalho-Langa-Robinson}.
However, for random dynamical systems the theory is still under development. There are many works on the existence and 
upper semi continuity of random attractors, including
nonautonomous random dynamical systems, see \cite{BATES201432,Caraballo-Langa,Bixiang-Wang-existence,Bixiang-Wang-existence-upper,Yao-Zhang}, and references therein. 
Nevertheless, as it is suggested in \cite{Carvalho-Langa-2,Carvalho-Langa-Robinson}, the lack of theorems on the persistence of random hyperbolic solutions 
makes very difficult to prove results on the lower semi-continuity of random attractors for problems under small random perturbations. The study of these special solutions are the core on the problem of continuity and stability of attractors, and we believe that this paper provides a direction to study continuity and stability of random attractors.
\par For random dynamical systems, Arnold, Boxler, Crauel and Kloeden \cite{Arnold-Boxler,Arnold-Crauel-1,Arnold-Kloeden}, studied the effect of the noise
on hyperbolic dynamical systems. 
In \cite{Arnold-Boxler} they proved existence 
of a stationary solution for a nonlinear random differential equation. 
More recently, in \cite{Bixiang-Wang-existence-stability}, they studied stability of tempered stationary solutions, and tempered attractors for small random perturbations of nonautonomous problems.
However, differently of our work, in none of these works it is possible to conclude that their stationary solutions exhibit hyperbolicity, because the noise presents some growth in time that do not allow us to apply any robustness result. 
\par We organize our paper as follows. In Section \ref{sec-uniform-exponential-dichotomies}, we present a notion of exponential dichotomy, simultaneously, for discrete and continuous nonautonomous random dynamical systems. 
Then, for the discrete case, we prove a robustness result, uniqueness, and continuous dependence of projections associated with the exponential dichotomy for the discrete case in Subsection \ref{subsec-ed-discrete}. In Subsection \ref{subsec-ed-continuous} we establish some theorems to compare exponential dichotomies of discrete and continuous nonautonomous random dynamical systems which allows to conclude the same stability results for the continuous case.
In Section \ref{sec-applications}, we apply the abstract robustness results for
nonautonomous random/stochastic differential equations.
In Subsection \ref{subsec-app-linear}, we show existence of 
exponential dichotomy for \eqref{newint-linear-nonautonomous-diffeential-equation} for the linear case. 
Later, in Subsection \ref{subsection-randomperturbation-of-autonomous-semilinear-problem},
we consider a family of nonautonomous random semilinear differential equations, we establish the existence and continuity of 
\textit{random hyperbolic solutions} for \eqref{newint-eq-nonautonomous-random-perturbation}.
Finally, in Subsection \ref{subsec-app-stochastic}, we show how to apply our result on persistence of hyperbolic equilibria for  
\eqref{eq-newintroduction-stochastic}, and as an example, we consider 
a damped wave equation with
a nonautonomous multiplicative white noise \eqref{eq-newint-damped-wave}.

\section{Exponential dichotomies for Nonautonomous RDS}\label{sec-uniform-exponential-dichotomies}

\par In this section, we introduce the notion of \textit{nonautonomous random dynamical systems} 
in a Banach space $X$. We consider a driving flow $\{\Theta_t\}_{t\in \mathbb{T}}$
over $\mathbb{T}\times\Omega$, where $(\Omega,\mathcal{F},\mathbb{P})$ is a probability space, and
$\mathbb{T}=\mathbb{Z}$ or $\mathbb{T}=\mathbb{R}$.
%\par In the following subsections we deal separately the discrete and continuous cases. 
\begin{definition}
	Let $(\Omega,\mathcal{F},\mathbb{P})$ be a probability space. 
	We say that a family of maps $\{\theta_t:\Omega\rightarrow\Omega: \, t\in \mathbb{T}\}$ is a
	\textbf{random flow} if it satisfies
	\begin{itemize}
		\item $\theta_0=Id_\Omega$;
		\item $\theta_{t+s}=\theta_t\circ \theta_s$, for all $t,s\in \mathbb{T}$;
		\item $\theta_t:\Omega\rightarrow \Omega$ is measurable for all $t\in \mathbb{T}$.
	\end{itemize}
	% 	\par If $\mathbb{M}=\Omega$, where $(\Omega,\mathcal{F},\mathbb{P})$ defines a probability space, 
	% and
	%\par Moreover, if $\mathbb{M}=\Sigma$ is a metric space, and 
	%$\theta_t:\Sigma\rightarrow \Sigma$ is continuous, we say that
	%$(\theta, \Sigma )$ is a \textbf{deterministic flow}.
\end{definition}

%	
%	\begin{remark}
%		Let $(\theta,\Omega)$ be a random flow and
%		$(\vartheta,\Sigma)$ be a deterministic flow.
%		Define $\mathbb{M}:=\Sigma\times \Omega$ and consider the product flow,
%		$\Theta:=\vartheta\times \theta$, i.e., 
%		$\Theta:=\{\Theta_t\}_{t\in \mathbb{T}}$, then
%		$(\Theta,\mathbb{M})$ will be called the \textbf{product flow}.
%	\end{remark}

%
%\par In our work, we consider a \textit{product flow}, in a particular case when $\Sigma=\mathbb{T}$ as the driving flow for our 
%\textit{nonautonomous random dynamical system} over continuous and discrete time.

\begin{definition}
	Let $\theta:=\{\theta_t:\Omega\rightarrow\Omega: \, t\in \mathbb{T}\}$ be a random flow. 
	Define $\Theta_t(\tau,\omega):=(t+\tau,\theta_t\omega)$ for each $(\tau,\omega)\in \mathbb{T}\times\Omega$, and $t\in \mathbb{T}$.
	We say that a family of maps $\{\varphi(t,\tau,\omega):X\to X; (t,\tau,\omega)\in \mathbb{T}^+\times\mathbb{T}\times\Omega\}$ 
	is a \textbf{nonautonomous random dynamical system (co-cycle)} driven by
	$\Theta$ if
	\begin{enumerate}
		\item the mapping
		$\mathbb{T}^+\times\Omega\times X\ni (t,\omega,x)\mapsto \varphi(t,\tau,\omega)x\in X$
		is measurable
		for each fixed $\tau\in \mathbb{T}$;
		\item
		$\varphi(0,\tau,\omega)=Id_X$,
		for each $(\tau,\omega)\in \mathbb{T}\times\Omega$;
		\item 
		$\varphi(t+s,\tau,\omega)=\varphi(t,\Theta_s(\tau,\omega))\circ\varphi(s,\tau,\omega)$,
		for every $t,s\geq 0$ in 
		$\mathbb{T}$, and $(\tau\omega)\in\mathbb{T}\times \Omega$;
		\item $\varphi(t,\tau,\omega):X\to X$ is a continuous map for each  $(t,\tau,\omega)\in \mathbb{T}^+\times\mathbb{T}\times\Omega$.
	\end{enumerate}
	We usually denote the pair $(\varphi,\Theta)_{(X,\mathbb{T}\times\Omega)}$, or $(\varphi,\Theta)$, to denote the co-cycle $\varphi$ driven by $\Theta$.
\end{definition}

\begin{remark}
	%	In the particular case $\Sigma=\mathbb{T}$, if the flow is the time translation,
	%	$\vartheta_t\tau:=\tau+t$ for every $t,\tau\in \mathbb{T}$, we obtain a
	%	driven flow on $\mathbb{T}$ which is the one that we use in our 
	%	results.
	%%	\par In this scenario, the product flow $\Theta$ on 
	%	Let $\mathbb{T}\times \Omega$ be the drive defined  by
	%	$\Theta_t(\tau,\omega):=(\tau+t,\theta_t\omega)$ for every 
	%	$\omega_\tau:=(\tau,\omega)\in \mathbb{T}\times \Omega$.
	To simplify the notation we will write $\omega_\tau:=(\tau,\omega)\in \mathbb{T}\times \Omega$, and
	$\Theta_t(\omega_\tau):=(\theta_t\omega)_{\tau+t}$. 
\end{remark}

\begin{remark}\label{remark-associated-processes}
	In many parts of this work we will associate a co-cycle 
	$(\varphi,\Theta)$ with a family of evolutions processes. In fact, 
	let $(\varphi,\Theta)$ be a nonautonomous random dynamical system.  
	For each $\omega_p\in \mathbb{T}\times\Omega$ we define the following \textbf{evolution process} 
	\begin{equation*}
	\Phi_{\omega_p}:=\{\varphi_{t,s}(\omega_p):=\varphi(t-s,\Theta_s\omega_p)\,; \, t\geq s \}.
	\end{equation*}
	This means that, for each $\omega_p\in \mathbb{T}\times \Omega$ the family of maps $\Phi_{\omega_p}$ satisfies
	\begin{enumerate}
		\item $\phi_{t,t}(\omega_p)=Id_X$, for each $t\in \mathbb{T}$;
		\item $\phi_{t,s}(\omega_p)\circ \phi_{s,r}=\varphi_{t,r}(\omega_p)$, for each
		$t\geq s\geq r$;
		\item the mapping $\{(t,s)\in \mathbb{T}^2: t\geq s\}\times X\ni (t,s,x)\mapsto \varphi_{t,s}(\omega_p)x\in X$ is continuous. 
	\end{enumerate}
	%	This association is very usefully in order to simplify the proof of some results.
\end{remark}

\par In this paper, we will use, in many of our proofs, the concept of \textit{global solutions} for the evolution process $\Phi_{\omega_p}$, for each $\omega_p\in \mathbb{T}\times \Omega$, associated with a given co-cycle $(\varphi,\Theta)$.
\begin{definition}
	Let $\mathcal{S}=\{S(t,s): t\geq s, t,s\in \mathbb{T}\}$ a evolution process on a Banach space $X$. We say that
	a map $\xi:\mathbb{R}\to X$ is a \textbf{global solution} for $\mathcal{S}$ if
	$S(t,s)\xi(s)=\xi(t)$ for every $t,s\in \mathbb{T}$ with $t\geq s$. 
	\par A global solution $\xi$ is \textbf{backwards bounded }if $\xi(-\infty,0])=\{\xi(t): t\leq 0\}$ is a bounded subset of $X$.
\end{definition}
\par We set $\mathcal{L}(X)$ as the space of all bounded linear maps between $X$ and $X$.
\par Recall the definition of \textit{strongly measurable}:
\begin{definition}
	Let $\Omega$ be a measurable space, and $X$ a Banach space. A map
	$P:\Omega\rightarrow \mathcal{L}(X)$ is said to be \textbf{strongly measurable}
	if for every $x\in X$ the map
	$\Omega\ni\omega\mapsto P(\omega)x\in X$ is measurable.	
\end{definition}
\begin{definition}
	A map $D:\mathbb{T}\times \Omega\to \mathbb{R}$ is said to be \textbf{$\Theta$-invariant} if
	for each $\omega_\tau\in \mathbb{R}\times\Omega$ we have that
	$D(\Theta_t\omega_\tau)=D(\omega_{\tau})$, for every $t\in\mathbb{T}$.
\end{definition}
\par Now, we define the notion of \textit{exponential dichotomy} for linear nonautonomous random dynamical systems.
\begin{definition}
	A nonautonomous random dynamical system $(\varphi,\Theta)$ such that
	$\varphi(t,\tau,\omega)\in \mathcal{L}(X)$, for all 
	$(t,\tau,\omega)\in \mathbb{T}^+\times\mathbb{T}\times\Omega$, is said to admit
	an \textbf{(uniform) exponential dichotomy} if 
	there exists a  $\theta$-invariant subset $\tilde{\Omega}$ of  $\Omega$ with full measure, 
	$\mathbb{P}(\tilde{\Omega})=1$, and a family of projections,
	$\Pi^s:=\{\Pi^s(\omega_\tau): \omega_\tau\in \mathbb{T}\times \tilde{\Omega}\}$
	such that
	\begin{enumerate}%[(i)]
		\item for each $\tau\in \mathbb{T}$ the map $\Pi_\tau^s(\cdot):=\Pi^s(\tau,\cdot):\tilde{\Omega}\to \mathcal{L}(X)$ is strongly measurable;
		\item $\Pi^s(\Theta_t\omega_\tau)\varphi(t,\omega_\tau)=
		\varphi(t,\omega_\tau)\Pi^s(\omega_\tau)$, for every
		$t\in \mathbb{T}^+$ and $\omega_\tau\in \mathbb{T}\times \tilde{\Omega}$;
		\item $\varphi(t,\omega_\tau):R(\Pi^u(\omega_\tau))\to R(\Pi^s(\Theta_t\omega_\tau)) $ is an isomorphism, where
		$\Pi^u_{\tau}:=Id_X-\Pi^s_{\tau}$ for all $\tau\in \mathbb{T}$;
		\item there exist $\Theta$-invariant maps
		$\alpha:\mathbb{T}\times\Omega\to (0,+\infty)$ and $K:\mathbb{T}\times\Omega\to [1,+\infty)$
		such that 
		\begin{eqnarray*}
		\|\varphi(t,\omega_\tau)\Pi^s(\omega_\tau)\|_{\mathcal{L}(X)}
		&\leq&
		K(\omega_\tau)e^{-\alpha(\omega_\tau)t}, \hbox{ for every } t\geq 0;\\
		\|\varphi(t,\omega_\tau)\Pi^u(\omega_\tau)\|_{\mathcal{L}(X)}
		&\leq&
		K(\omega_\tau)e^{\alpha(\omega_\tau)t}, \hbox{ for every } t\leq 0,
		\end{eqnarray*}
		for every $\omega_\tau\in \mathbb{T}\times\tilde{\Omega}$
	\end{enumerate}
	In this case, the function $K$ is called a \textbf{bound} and
	$\alpha$ an \textbf{exponent} for the exponential dichotomy.
	\par We refer to the exponential dichotomy as:
	\textbf{continuous} if $\mathbb{T}:=\mathbb{R}$, and \textbf{discrete} if $\mathbb{T}:=\mathbb{N}$.
\end{definition}

\begin{remark}
	\par If for each $\omega_p$ the map $\mathbb{R}\ni t\to K(\Theta_t\omega_p)$ is not constant we say that 
		$(\varphi,\Theta)$ admits an \textbf{nonuniform (with respect to $t$) exponential dichotomy}. 
		In the special case when the mapping $\mathbb{R}\ni t\to K(\Theta_t\omega_p)$ is \textit{tempered} we say that $(\varphi,\Theta)$ admits a \textbf{tempered exponential dichotomy}
		In this work, we do not deal with the case of nonuniform exponential dichotomies. For more information on this topic we recommend \cite{Barreira-Dragicevi-Valls,Barreira-Valls-Sta,Zhou-Lu-Zhang-1}.
\end{remark}
%
%\par In the following subsection we only consider uniform exponential dichotomies. 

\begin{remark}
	If a co-cycle $(\varphi,\Theta)$ admits an exponential dichotomy, then for each fixed
	$\omega_p\in \mathbb{T}\times \tilde{\Omega}$ the associated evolution process
	$\Phi_{\omega_p}$ also admits it in the sense of Henry \cite[Section 7.6]{Henry-1}.
	%	In fact, let $\Pi^s$ be a family of projections associated with the exponential dichotomy,
	%	then for each fixed $\omega_p\in \mathbb{T}\times \tilde{\Omega}$ define
	%	$P_t(\omega_p):=\Pi^s(\Theta_t\omega_p)$, for every $t\in \mathbb{T}$.
	%	Thus
	%	\begin{enumerate}
	%		\item for every $t,s\in \mathbb{T}$ with $t\geq s$ we have
	%		\begin{equation*}
	%		P_t(\omega_p)\varphi_{t,s}(\omega_p)=
	%		\Pi^s(\Theta_{t-s}\Theta_s\omega_p)\varphi(t-s,\Theta_s\omega_p)
	%		=\varphi(t-s,\Theta_s\omega_p)\Pi^s(\Theta_{s}\omega_p)
	%		=\varphi_{t,s}(\omega_p)P_s(\omega_p).
	%		\end{equation*}
	%		\item if
	%		$Q=Id_X-P$ and $\Pi^u=Id_X-\Pi^s$, then 
	%		$\varphi_{t,s}(\omega_p)|_{R(Q_s(\omega_p))}=
	%		\varphi(t-s,\Theta_s\omega_p)|_{R(\Pi^u(\Theta_s\omega_p))}$ 
	%		is an isomorphism
	%		for all $t\geq s$ in $\mathbb{T}$.
	%		\item Finally,
	%		\begin{eqnarray}
	%		\|\varphi(t-s,\Theta_s\omega_p)\Pi^s(\Theta_{s}\omega_p)\|_{\mathcal{L}(X)}
	%		&\leq&
	%		K(\Theta_{s}\omega_p)e^{-\alpha(\omega_p)(t-s)}, \hbox{ for every } t\geq s;\\
	%		\|\varphi(t-s,\Theta_s\omega_p)\Pi^u(\Theta_{s}\omega_p)\|_{\mathcal{L}(X)}
	%		&\leq&
	%		K(\Theta_{s}\omega_p)e^{\alpha(\omega_p)(t-s)}, \hbox{ for every } t\leq s,
	%		\end{eqnarray}
	%		where $K:\mathbb{T}\times\tilde{\Omega}\to [1,+\infty)$ is the bound of the exponential dichotomy, which is $\Theta$-invariant, i.e.,
	%		$K(\Theta_{s}\omega_p)=K(\omega_p)$.
	%		%		\leq D(\omega_p)e^{\nu|s|}$ for all 
	%		%		$s\in \mathbb{T}$, for some 
	%		%		$\theta$-invariant random variable $D$ and $\nu>0$ and the proposition is proved.
	%	\end{enumerate}
\end{remark}

\par The interpretation of a nonautonomous random dynamical system as a family of evolution processes with a parameter provides also a special function 
usually called as \textit{Green functions}:
\begin{definition}
	Let $(\varphi,\Theta)$ be a co-cycle which admits 
	an exponential dichotomy with family of projections 
	$\Pi^s$. 
	A \textbf{Green function} associated to $(\varphi,\Theta)$ and family of 
	projection $\Pi^s$ is given by
	\begin{equation*}
	G_{\omega_p}(t,s)= \left\{ 
	\begin{array}{l l} 
	\varphi_{t,s}(\omega_p)\Pi^s(\Theta_s\omega_p), 
	&  \quad \hbox{if } t\geq s,
	\\ -\varphi_{t,s}(\omega_p)\Pi^u(\Theta_s\omega_p), \, 
	& \quad \hbox{if } t<s,
	\end{array} 
	\right.
	\end{equation*} 
	for each $\omega_p$ fixed. 
\end{definition}
%
%\begin{remark}
%	The Green function $G$ defined above satisfies 
%	\begin{equation*}
%	\|G_{\omega_p}(t,s)\|_{\mathcal{L}(X)}\leq K(\Theta_s\omega_p)e^{-\alpha(\omega_p)|t-s|},
%	\end{equation*}
%	for every $t,s\in \mathbb{T}$ and $\omega_p\in \mathbb{T}\times \tilde{\Omega}$.
%\end{remark}

\subsection{Exponential dichotomy for nonautonomous random co-cycles: discrete case}\label{subsec-ed-discrete}

\quad
\par In this subsection, we study a discrete nonautonomous random dynamical system with exponential dichotomy. In this paper, the purpose of the discrete case is to works as a tool to obtain results for the continuous case of exponential dichotomies and hence, for differential equations. The goal is to presented a summary of results concerning exponential dichotomy for discrete co-cycles driving by flows over non-compact symbols spaces that we are going to need in order to establish robustness results of hyperbolicity for differential equations.
We prove that the property of admitting an discrete exponential dichotomy is stable under perturbation (Theorem \ref{th-discrete-robstness-random-perturbations-of-autonomous-nonuniform-case}), a type of admissibility result (Theorem \ref{th-admissibility-for-pertubed-process}), and uniqueness and continuous dependence of projections (see Corollary \ref{cor-uniqueness-projection-discrete} and Theorem \ref{th-continuity-projection-continuous}, respectively).
\par The techniques used in this subsection are the same introduced by Henry \cite{Henry-1} (for deterministic dynamical systems) and by Zhou \textit{et al.} in \cite{Zhou-Lu-Zhang-1} (for random dynamical systems).
%The results on admissibility and robustness are inspired in \cite{Zhou-Lu-Zhang-1}
%and the results concerning the projections are motivated by \cite{Henry-1}.
%\par We reinforce that the 
%techniques used in this subsection are the same used by Zhou \textit{et al.} in \cite{Zhou-Lu-Zhang-1}, where they studied robustness for discrete autonomous random dynamical systems with tempered exponential dichotomy. 

%\par However, we choose to work in a simpler situation, because we want to understand the effect of a ``small bouded noise'' on the hyperbolicity of an autonomous problem.
%An important difference is that, in our case we consider a uniform exponential dichotomy and they prove their result for \textit{tempered exponential dichotomies}.
\par A linear discrete nonautonomous random dynamical systems
$(\varphi,\Theta)$
can be associated with
nonautonomous random difference equations. In fact, for each $\omega_p\in \mathbb{Z}\times\Omega$ we study
\begin{equation}\label{equation-linear-homogeneus-nonautonomous-random}
x_{n+1}=A(\Theta_n\omega_p)x_n, \ \  x_n\in X \hbox{ and } n\in \mathbb{Z},
\end{equation}
where $A:\mathbb{T}\times \Omega\rightarrow \mathcal{L}(X)$ 
and 
%$\varphi_n(\omega_p):=A(\Theta_n\omega_p)$,
%$\varphi(1,\omega_p):=A(\omega_p)$, and
$\varphi(n,\omega_p):=A(\Theta_{n-1}\omega_p)\circ \cdots \circ A(\omega_p)$ for $n>0$ and $\varphi(0,\omega_p)=Id_X$.

\par We prove existence of solutions for the non-homogeneous problem
\begin{equation}\label{eq-random-perturbed-nonhomogen-linear}
x_{n+1}=A(\Theta_n\omega_p)x_n+B(\Theta_n\omega_p)x_n f_n, \hbox{ for every } n\in \mathbb{Z}.
\end{equation}
As in the deterministic case, see Henry \cite{Henry-1}, this plays an important role
in the proof of robustness of exponential dichotomy. 

\begin{theorem}\label{th-admissibility-for-pertubed-process}
	Let $(\varphi,\Theta)$ be a co-cycle generated by
	$A:\mathbb{Z}\times\Omega\to \mathcal{L}(X)$ 
	and assume that it admits an
	exponential dichotomy with bound $K$ and exponent $\alpha$. 
	Then there exists a $\Theta$-invariant map $\delta$ with
	\begin{equation*}
	0\leq \delta(\omega_p) < 
	\frac{1-e^{-\alpha(\omega_p)}}{1+e^{-\alpha(\omega_p)}},
	\hbox{ for each }\omega_p\in \mathbb{Z}\times\Omega,
	\end{equation*}
	for which, if $B:\mathbb{Z}\times\Omega\to \mathcal{L}(X)$ satisfies
	\begin{equation*}
	\|B(\Theta_k\omega_p)\|_{\mathcal{L}(X)} \leq \delta(\omega_p) K(\omega_p)^{-1}, \forall k\in \mathbb{Z},
	\end{equation*}
	then, for each $\omega_p$ fixed and $\{f_n\}\in l^\infty(\mathbb{Z})$
	the difference equation
	\begin{equation}\label{eq-discrete-perturbed-random-equation}
	x_{n+1}=A(\Theta_n\omega_p)x_n+B(\Theta_n\omega_p)x_n +f_n, \hbox{ for every } n\in \mathbb{Z},
	\end{equation}
	possesses a unique bounded solution $x(\cdot,\omega_p)$.
\end{theorem}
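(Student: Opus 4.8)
The plan is to reformulate \eqref{eq-discrete-perturbed-random-equation} as a fixed-point equation in the Banach space $l^\infty(\mathbb{Z},X)$ of bounded $X$-valued sequences and apply the contraction mapping principle, arranging matters so that the stated threshold on $\delta$ is exactly the contraction condition. Throughout I would fix $\omega_p\in\mathbb{Z}\times\tilde\Omega$ and work with the associated evolution process $\Phi_{\omega_p}$ and its Green function $G_{\omega_p}$. The key structural point is that $K$ and $\alpha$ are $\Theta$-invariant, so the dichotomy estimates hold with the \emph{constant} values $K:=K(\omega_p)$ and $\alpha:=\alpha(\omega_p)$ at every time index; this is what makes the ensuing sums geometric with explicit, $n$-independent bounds, and is precisely the feature that fails in the nonuniform regime deliberately excluded in this work.

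First I would handle the unperturbed nonhomogeneous equation $x_{n+1}=A(\Theta_n\omega_p)x_n+g_n$ with $g=\{g_n\}\in l^\infty(\mathbb{Z},X)$, defining the candidate solution operator through the Green function,
\[
(\mathcal{G}g)_n:=\sum_{m\in\mathbb{Z}}G_{\omega_p}(n,m+1)\,g_m=\sum_{m\leq n-1}\varphi_{n,m+1}(\omega_p)\Pi^s(\Theta_{m+1}\omega_p)g_m-\sum_{m\geq n}\varphi_{n,m+1}(\omega_p)\Pi^u(\Theta_{m+1}\omega_p)g_m,
\]
and establishing two claims: (i) $\mathcal{G}g$ is a well-defined bounded solution of this equation, and (ii) it is its unique bounded solution. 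For (i), estimating each term with the dichotomy bounds and substituting $j=n-m-1$ in the stable sum and $j=m+1-n$ in the unstable sum turns both into geometric series, yielding
\[
\|\mathcal{G}\|_{\mathcal{L}(l^\infty)}\leq K\left(\frac{1}{1-e^{-\alpha}}+\frac{e^{-\alpha}}{1-e^{-\alpha}}\right)=K\,\frac{1+e^{-\alpha}}{1-e^{-\alpha}}.
\]
For (ii), it suffices to show that the only bounded global solution of the homogeneous equation $z_{n+1}=A(\Theta_n\omega_p)z_n$ is trivial: writing $z_n=\Pi^s(\Theta_n\omega_p)z_n+\Pi^u(\Theta_n\omega_p)z_n$, propagating the stable component and letting $m\to-\infty$, and propagating the unstable component (using that $\varphi$ is an isomorphism on the unstable fibers) and letting $n\to+\infty$, the two dichotomy estimates force each component to vanish.

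Next, I would choose any $\Theta$-invariant $\delta$ with $0\leq\delta(\omega_p)<(1-e^{-\alpha(\omega_p)})/(1+e^{-\alpha(\omega_p)})$, for instance half the right-hand side, which is automatically $\Theta$-invariant since $\alpha$ is. Claims (i)--(ii), applied with forcing $g_n=B(\Theta_n\omega_p)x_n+f_n\in l^\infty$, show that $x\in l^\infty$ solves \eqref{eq-discrete-perturbed-random-equation} if and only if it is a fixed point of the affine map $\mathcal{T}(x):=\mathcal{G}(Bx+f)$, where $(Bx)_n:=B(\Theta_n\omega_p)x_n$. Since $\|Bx\|_\infty\leq\big(\sup_k\|B(\Theta_k\omega_p)\|\big)\|x\|_\infty\leq\delta(\omega_p)K^{-1}\|x\|_\infty$ by hypothesis, the bound on $\|\mathcal{G}\|$ gives, after the factors $K$ and $K^{-1}$ cancel,
\[
\|\mathcal{T}(x)-\mathcal{T}(y)\|_\infty\leq\|\mathcal{G}\|\,\|B(x-y)\|_\infty\leq\frac{1+e^{-\alpha}}{1-e^{-\alpha}}\,\delta(\omega_p)\,\|x-y\|_\infty,
\]
and the constraint on $\delta(\omega_p)$ makes this Lipschitz constant strictly less than $1$. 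Hence $\mathcal{T}$ is a contraction on $l^\infty(\mathbb{Z},X)$, and the Banach fixed point theorem produces a unique fixed point $x(\cdot,\omega_p)$, which is the desired unique bounded solution.

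The main obstacle I anticipate is establishing claims (i)--(ii) cleanly in the discrete random setting, namely that $\mathcal{G}g$ genuinely solves the unperturbed equation and is its only bounded solution, together with the sharp operator norm $K(1+e^{-\alpha})/(1-e^{-\alpha})$; once this admissibility for the unperturbed problem is in hand, the contraction step is routine and the precise threshold on $\delta$ falls out automatically. The $\Theta$-invariance of $K$ and $\alpha$ is essential at every stage, as it is what renders the dichotomy bounds uniform in the time index and the resulting series summable to explicit constants.
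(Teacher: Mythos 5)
Your proposal is correct and follows essentially the same route as the paper: the map $\mathcal{T}(x)=\mathcal{G}(Bx+f)$ is precisely the paper's operator $\Gamma_f$, and the contraction estimate with Lipschitz constant $\delta(\omega_p)(1+e^{-\alpha(\omega_p)})/(1-e^{-\alpha(\omega_p)})$ is identical. The only difference is that you explicitly verify the equivalence between fixed points of the Green-function operator and bounded solutions of \eqref{eq-discrete-perturbed-random-equation} (your claims (i)--(ii)), a step the paper dispatches as ``a standard procedure'' with a citation to Henry and Zhou \emph{et al.}; this is a welcome addition, not a divergence.
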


\par The proof of Theorem \ref{th-admissibility-for-pertubed-process} follows step by step the proof presented by Zhou \textit{et al.} \cite{Zhou-Lu-Zhang-1}, which is done for tempered exponential dichotomies.
Some details of the proof are included for they will be needed in forthcoming results.
%It is important to notice that one can change the driving flow 
%$\theta:\Omega\to \Omega$, by any flow $\Theta: \mathbb{M}\to \mathbb{M}$ with suitable properties on an nonempty set $\mathbb{M}$ (which usually will be a product between a metric space $\Sigma$ and a probability space $\Omega$ as in \cite{Bixiang-Wang-existence}).
\begin{proof}
	Let $\omega_p\in \mathbb{Z}\times\tilde{\Omega}$ and
	$f\in l^\infty(\mathbb{Z})$. As a standard procedure, see for instance \cite{Henry-1,Zhou-Lu-Zhang-1},
	we only need to prove that the operator
	\begin{equation*}
	(\Gamma_fx)(n,\omega_p) := \sum_{k=-\infty}^{+\infty} G_{\omega_p}(n,k+1)
	(B(\Theta_k\omega_p) x_k + f_k), \ \ \forall n\in \mathbb{Z}
	\end{equation*}
	has a unique fixed point $x(\cdot,\omega_p)$ in $l^\infty(\mathbb{Z})$.
	\par First, let us prove that $\Gamma_fx(\cdot,\omega_p) \in l^\infty(\mathbb{Z})$, 
	for $x\in l^\infty(\mathbb{Z})$.
	\begin{eqnarray*}
		& &\|(\Gamma_f x)(n,\omega_p)\|_X \leq \sum_{k=-\infty}^{+\infty}
		\|G_{\omega_p}(n,k+1)\|_{\mathcal{L}(X)}
		(\|B(\Theta_k\omega_p)\|_{\mathcal{L}(X)}\, \|x_k\|_X +\|f_k\|_X) \\
		& & \leq \sum_{k=-\infty}^{+\infty} K(\Theta_{k+1}\omega_p) e^{-\alpha(\omega_p)|n-1-k|}
		(\delta(\omega_p) K(\omega)^{-1}\|x_k\|_X+\|f_k\|_X) \\
		& & \leq  \sum_{k=-\infty}^{+\infty}  e^{-\alpha(\omega_p)|n-1-k|} (\delta(\omega_p) \|x\|_{l^\infty}+\|f\|_{l^\infty}K(\omega_p)) \\
		& &\leq \frac{1+e^{-\alpha(\omega_p)}}{1-e^{-\alpha(\omega_p)}} (\delta(\omega_p) \|x\|_{l^\infty}+ \|f\|_{l^\infty}K(\omega_p))
		<+\infty.
	\end{eqnarray*}
	Then, $\Gamma_f(\cdot,\omega_p)(l^\infty(\mathbb{Z}))\subset l^\infty(\mathbb{Z})$.
	Finally, if $x,y\in l^\infty(\mathbb{Z})$, we have that
	\begin{eqnarray*}
		& &\|(\Gamma_f x)(n,\omega_p) - (\Gamma_f y)(n,\omega_p)\|_X  \\
		& &\leq \sum_{k=-\infty}^{+\infty} \|G_{\omega_p}(n,k+1)\|_{\mathcal{L}(X)}\, \|B(\Theta_k\omega_p\|_{\mathcal{L}(X)} \|x_k-y_k\|_X \\
		& &\leq \sum_{k=-\infty}^{+\infty} K(\Theta_{k+1}\omega_p) e^{-\alpha(\omega_p)|n-1-k|}\delta(\omega_p) K(\omega_p)^{-1} \|x_k-y_k\|_X \\
		& &\leq  \frac{1+e^{-\alpha(\omega_p)}}{1-e^{-\alpha(\omega_p)}} \delta(\omega_p) \|x-y\|_{l^\infty}.
	\end{eqnarray*}
	Therefore,
	\begin{equation*}
	\|\Gamma_fx(\cdot,\omega_p)-\Gamma_fy(\cdot,\omega_p)\|_{l^\infty}\leq \frac{1+e^{-\alpha(\omega_p)}}{1-e^{-\alpha(\omega_p)}} \delta(\omega_p) \|x-y\|_{l^\infty},
	\end{equation*}
	thus, we choose $\delta(\omega_p)<\frac{1-e^{-\alpha(\omega_p)}}{1+e^{-\alpha(\omega_p)}}$, 
	thus
	$\Gamma_f(\cdot,\omega_p)$ is a contraction in $l^\infty(\mathbb{Z})$. In this way, we obtain
	$x\in l^\infty(\mathbb{Z})$ such that $x_n(\omega_p)=(\Gamma_fx)(n,\omega_p)$ 
	for each $n\in \mathbb{Z}$, in other words, $x(\cdot,\omega_p)$ is the only solution for 
	\eqref{eq-discrete-perturbed-random-equation}.
\end{proof}

\par The following corollary establishes uniqueness for the family of projections. The proof follows the ideas of \cite[Corollary 7.5]{Carvalho-Langa-Robison-book} and it is included for the readers convenience.
\begin{corollary}\label{cor-uniqueness-projection-discrete}
	If $(\varphi,\Theta)$ admits an exponential dichotomy,
	%	 with $\nu <\alpha(\omega_p)$,
	then the family of projections are uniquely determined.
\end{corollary}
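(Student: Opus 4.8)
The plan is to show that the ranges $R(\Pi^s(\omega_p))$ and $R(\Pi^u(\omega_p))$ are determined intrinsically by the dynamics of the associated evolution process $\Phi_{\omega_p}$, and hence do not depend on the particular family of projections realizing the dichotomy. Since a bounded projection on $X$ is completely determined by its range and its kernel, and since $X=R(\Pi^s(\omega_p))\oplus R(\Pi^u(\omega_p))$, this forces any two admissible families to coincide pointwise in $\omega_p\in\mathbb{T}\times\tilde{\Omega}$. Thus the whole argument reduces to two dynamical characterizations, one per subspace, making no reference to the projections themselves.

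First I would characterize the stable subspace as the set of points with bounded forward orbit:
\[
R(\Pi^s(\omega_p))=\{x\in X:\ \sup_{t\geq 0}\|\varphi(t,\omega_p)x\|_X<+\infty\}.
\]
The inclusion $\subseteq$ is immediate from the first dichotomy estimate, since $\Pi^s(\omega_p)x=x$ gives $\|\varphi(t,\omega_p)x\|_X\leq K(\omega_p)e^{-\alpha(\omega_p)t}\|x\|_X$. For $\supseteq$ I would split $x=\Pi^s(\omega_p)x+\Pi^u(\omega_p)x$; the stable part stays bounded, so the unstable part $\varphi(t,\omega_p)\Pi^u(\omega_p)x$ is bounded as well. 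Using that $\varphi(t,\omega_p)$ is an isomorphism from $R(\Pi^u(\omega_p))$ onto $R(\Pi^u(\Theta_t\omega_p))$, together with the second estimate applied to $-t\leq 0$ and the $\Theta$-invariance of $K,\alpha$, I would obtain the reverse bound $\|\varphi(t,\omega_p)\Pi^u(\omega_p)x\|_X\geq K(\omega_p)^{-1}e^{\alpha(\omega_p)t}\|\Pi^u(\omega_p)x\|_X$, which blows up as $t\to+\infty$ unless $\Pi^u(\omega_p)x=0$. Hence $x\in R(\Pi^s(\omega_p))$.

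Next I would characterize the unstable subspace through backward-bounded backward solutions of $\Phi_{\omega_p}$: namely, $x\in R(\Pi^u(\omega_p))$ if and only if there is a global solution $\xi$ on $(-\infty,0]$ with $\xi(0)=x$ and $\sup_{s\leq 0}\|\xi(s)\|_X<+\infty$. For one direction, extending $x\in R(\Pi^u(\omega_p))$ backwards by the inverse isomorphism yields such a $\xi$, which in fact decays as $s\to-\infty$ by the second estimate. For the converse, given a backward-bounded solution $\xi$ with $\xi(0)=x$, I would apply the commuting relation $\Pi^s(\Theta_t\omega_\tau)\varphi(t,\omega_\tau)=\varphi(t,\omega_\tau)\Pi^s(\omega_\tau)$ with $\omega_\tau=\Theta_s\omega_p$ and $t=-s\geq 0$ to write $\Pi^s(\omega_p)\xi(0)=\varphi(-s,\Theta_s\omega_p)\Pi^s(\Theta_s\omega_p)\xi(s)$; the first estimate and $\Theta$-invariance then give $\|\Pi^s(\omega_p)\xi(0)\|_X\leq K(\omega_p)e^{\alpha(\omega_p)s}\sup_{r\leq 0}\|\xi(r)\|_X\to 0$ as $s\to-\infty$, forcing $\Pi^s(\omega_p)x=0$, i.e. $x\in R(\Pi^u(\omega_p))$.

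Both characterizations involve only the maps $\varphi(t,\omega_p)$, so if $\Pi^s$ and $\tilde{\Pi}^s$ are two families giving an exponential dichotomy for $(\varphi,\Theta)$, then $R(\Pi^s(\omega_p))=R(\tilde{\Pi}^s(\omega_p))$ and $R(\Pi^u(\omega_p))=R(\tilde{\Pi}^u(\omega_p))$ for every $\omega_p$, whence $\Pi^s(\omega_p)=\tilde{\Pi}^s(\omega_p)$. I expect the main obstacle to be bookkeeping rather than conceptual: one must be careful that the ``backward'' values $\varphi(t,\cdot)$ with $t\leq 0$ on the unstable subspace refer to the inverse isomorphism, track how the base point $\Theta_s\omega_p$ moves under the flow, and repeatedly invoke the $\Theta$-invariance of $K$ and $\alpha$ to reduce every estimate back to the fixed fiber $\omega_p$. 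Measurability plays no role here, since uniqueness is a pointwise-in-$\omega_p$ statement.
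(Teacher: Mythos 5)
Your proof is correct, but it follows a different route from the paper's. The paper deduces uniqueness as a quick corollary of Theorem \ref{th-admissibility-for-pertubed-process} (with $B=0$): choosing the forcing $f_{-1}=z$, $f_n=0$ otherwise, the unique bounded solution of the inhomogeneous equation is given by $x_n=\sum_k G^{(i)}_{\omega_p}(n,k+1)f_k$ for either Green function, and evaluating at $n=0$ forces $\Pi^{u,(1)}(\omega_p)z=x_0=\Pi^{u,(2)}(\omega_p)z$. You instead characterize the two ranges intrinsically --- $R(\Pi^s(\omega_p))$ as the points with bounded forward orbit, $R(\Pi^u(\omega_p))$ as the points admitting a backward-bounded solution --- and conclude since a projection is determined by its range and kernel. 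Both arguments are sound; the paper's is shorter because the admissibility theorem has already been established, while yours is more self-contained (it uses only the dichotomy estimates, the commutation relation, and the isomorphism property) and produces the dynamical characterizations of the stable and unstable subspaces, which the paper in fact proves separately later (Claims 4 and 5 in the proof of Theorem \ref{th-discrete-dichotomy-implies-continuous-dichotomy}, and the subspaces $V^{\pm}$ in Lemma \ref{lemma-key-to-prove-robstness-of-discrete}). Your handling of the inverse on the unstable range and the repeated use of $\Theta$-invariance of $K$ and $\alpha$ are exactly the bookkeeping required, and the observation that measurability is irrelevant for a pointwise uniqueness statement is correct.
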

\begin{proof}
	Let $\Pi^{u,(i)}$, for $i=1,2$, 
	be projections associated with an exponential dichotomy of $(\varphi,\Theta)$.
	\par Given $\omega_p\in \mathbb{Z}\times \tilde{\Omega}$ and $z\in X$, define $f_n=0$, for all $n\neq -1$, and $f_{-1}=z$. From Theorem \ref{th-admissibility-for-pertubed-process} with $B=0$, there exists
	$\{x(n,\omega_p): n\in \mathbb{Z}\}$ the unique bounded solution of 
	\begin{equation*}
	x_{n+1}(\omega_p)=A(\Theta_n\omega_p) x_n +f_n, \ \ n\in \mathbb{Z}.
	\end{equation*}
	From the proof of Theorem \ref{th-admissibility-for-pertubed-process} (with $B=0$), it is possible to see that this solution is given by
	\begin{equation}
	x_n(\omega_p)=\sum_{k=-\infty}^{+\infty}G_{\omega_p}^{(i)}(n,k+1)f_k, \  \hbox{ for }
	i=1,2,
	\end{equation}
	where $G^{(i)}$ is the Green function associated with $\Pi^{(i)}$, for $i=1,2$.
	By uniqueness of the solution, we must have that
	$x_0(\omega_p)=\sum_{-\infty}^{+\infty}G_{\omega_p}^{(i)}(0,k+1)f_k=G_{\omega_p}^{(i)}(0,0)f_{-1}=\Pi^{u,(i)}(\omega_p)z$,
	for $i=1,2$.
	Therefore, $\Pi^{u,1}(\omega_p)=\Pi^{u,2}(\omega_p)$ for all
	$\omega_p\in \mathbb{T}\times\tilde{\Omega}$.
\end{proof}

\par For later, we will need a type of Gr\"onwall inequality, see \cite{Barreira-Valls-Nonuniform} for a proof.

\begin{lemma}\label{lemma-grownwall-inequality}
	Let $a$ and $D$ be positive constants and $\gamma,\delta$ nonnegative constants. Suppose that 
	$u:=\{u_n\}_{n\in \mathbb{J}}$ is a nonnegative bounded sequence on 
	$\mathbb{J}=\mathbb{Z}_N^+\ \  (\hbox{or }\mathbb{Z}_N^-)$, such that
	\begin{equation*}
	u_n\leq \gamma D e^{-a(n-N)} + \delta D \sum_{k=N}^{+\infty} e^{-a|n-k-1|}u_k, \ \ n\in \mathbb{J}=\mathbb{Z}_N^+,
	\end{equation*}
	\begin{equation*}
	(\hbox{ or } u_n\leq \gamma D e^{-a(n-N)} + \delta D \sum_{k=-\infty}^{N-1} e^{-a|n-k-1|}u_k, \ \ n\in \mathbb{J}=\mathbb{Z}_N^-,)
	\end{equation*}
	where $\delta < D^{-1}(1-e^{-a})/(1+e^{-a})$.
	\par Then
	\begin{equation*}
	u_n\leq \frac{\gamma D}{1-\delta De^{-a}/(1-e^{-(a+\tilde{a})})} e^{-\tilde{a}(n-N)}, \ \
	n\in \mathbb{J}=\mathbb{Z}_N^+,
	\end{equation*}
	\begin{equation*}
	(\hbox{ or }u_n\leq \frac{\gamma D}{1-\delta De^{-\tilde{b}}/(1-e^{-(a+\tilde{b})})} e^{-\tilde{b}(N-n)}, \ \
	n\in \mathbb{J}=\mathbb{Z}_N^-),
	\end{equation*}
	where $\tilde{a}:=-\ln(\cosh a - [\cosh ^2 a-1-2\delta\sinh a]^{1/2})$ and 
	$\tilde{b}:= \tilde{a}+\ln(1+2\delta D \sinh a)$.
\end{lemma}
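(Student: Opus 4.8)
The plan is to recast the hypothesis as an affine fixed-point inequality driven by a positive operator and then to compare $u$ with an explicit exponential solution; I treat the forward case $\mathbb{J}=\mathbb{Z}_N^+$, the backward one being symmetric. Writing $g_n := \gamma D e^{-a(n-N)}$ and
\begin{equation*}
(Tv)_n := \delta D\sum_{k=N}^{+\infty} e^{-a|n-k-1|} v_k, \qquad n \in \mathbb{Z}_N^+,
\end{equation*}
the assumption reads $u \le Tu + g$ coordinatewise, with $u,g \in l^\infty(\mathbb{Z}_N^+)$. The operator $T$ is order-preserving with nonnegative kernel, and
\begin{equation*}
\|T\|_{l^\infty\to l^\infty} = \delta D\sup_n\sum_{k=N}^{+\infty} e^{-a|n-k-1|} \le \delta D\,\frac{1+e^{-a}}{1-e^{-a}} < 1,
\end{equation*}
where the final inequality is exactly the standing hypothesis $\delta < D^{-1}(1-e^{-a})/(1+e^{-a})$. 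First I would iterate: since $T$ preserves order, $u \le Tu + g \le \cdots \le T^{m+1}u + \sum_{j=0}^{m} T^j g$, and boundedness of $u$ gives $\|T^{m+1}u\|_{l^\infty} \le \|T\|^{m+1}\|u\|_{l^\infty}\to 0$; letting $m\to\infty$ yields $u \le (I-T)^{-1}g$.

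It then remains to identify $(I-T)^{-1}g$ with the asserted bound, for which I would exhibit $w_n := C e^{-\tilde a(n-N)}$ as the exact bounded solution of $w = Tw + g$; since $\|T\|<1$ this fixed point is unique, so $w = (I-T)^{-1}g$ and $u \le w$ follows. Setting $\rho := e^{-\tilde a}$ and splitting the kernel at $k=n$, the forward and backward geometric series
\begin{equation*}
\sum_{k=N}^{n-1} e^{-a(n-1-k)}\rho^{k-N}, \qquad \sum_{k=n}^{+\infty} e^{-a(k+1-n)}\rho^{k-N}
\end{equation*}
sum in closed form. For $n>N$, demanding that the leftover $e^{-a(n-N)}$ contributions cancel against $g_n$ forces the characteristic equation $\rho^2 - 2\rho\cosh a + 1 + 2\delta D\sinh a = 0$, whose smaller root $\rho = \cosh a - (\cosh^2 a - 1 - 2\delta D\sinh a)^{1/2}$ defines $\tilde a$; this root is real precisely when $\delta D \le \tfrac12\sinh a$, which the hypothesis guarantees since $(1-e^{-a})/(1+e^{-a}) = \tanh(a/2)\le \tfrac12\sinh a$. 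Evaluating $w = Tw + g$ at the endpoint $n=N$, where the forward sum is empty, then fixes $C = \gamma D\big/\big(1 - \delta D e^{-a}/(1-e^{-(a+\tilde a)})\big)$, as claimed.

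The main obstacle will be the bookkeeping that reconciles the two determinations of the constant: the cancellation condition at $n>N$ gives $C = (\gamma/\delta)(\rho - e^{-a})$, while the endpoint $n=N$ gives the quotient above, and I must verify that these agree. This is an algebraic identity equivalent to the characteristic equation — substituting $\rho^2 = 2\rho\cosh a - 1 - 2\delta D\sinh a$ collapses the difference to zero — and the same manipulation shows $1 - \delta D e^{-a}/(1-e^{-(a+\tilde a)}) = (e^a-\rho)/(2\sinh a) > 0$, so that $C>0$ and $w$ is genuinely the bounded fixed point rather than a spurious one. For the backward case on $\mathbb{Z}_N^-$, the reflection $n\mapsto -n$ turns the kernel around and the relevant rate is now governed by the other root $\rho_+ = e^{\tilde a}(1 + 2\delta D\sinh a)$ of the same quadratic, i.e. by $\tilde b = \tilde a + \ln(1 + 2\delta D\sinh a)$; after this change the identical positivity-plus-Neumann-series comparison applies verbatim and produces the stated estimate.
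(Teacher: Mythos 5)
Your proof is correct. Note first that the paper itself contains no proof of this lemma --- it is quoted from \cite{Barreira-Valls-Nonuniform} --- so there is nothing internal to compare against; your route (monotone Neumann iteration $u\le (I-T)^{-1}g$ for the positive kernel operator $T$ with $\|T\|_{l^\infty\to l^\infty}\le \delta D(1+e^{-a})/(1-e^{-a})<1$, followed by identification of $(I-T)^{-1}g$ with an explicit exponential fixed point) is a legitimate self-contained argument. Every identity you defer to ``bookkeeping'' does hold: the two determinations of the constant, $C=(\gamma/\delta)(\rho-e^{-a})$ from the cancellation of the $e^{-a(n-N)}$ terms and $C=\gamma D\big/\big(1-\delta De^{-a}/(1-e^{-(a+\tilde a)})\big)$ from the endpoint $n=N$, coincide exactly because $\rho^2-2\rho\cosh a+1+2\delta D\sinh a=0$; moreover $1-\delta De^{-a}/(1-e^{-(a+\tilde a)})=\delta D/(\rho-e^{-a})=(e^a-\rho)/(2\sinh a)>0$, and the discriminant is nonnegative since $\tanh(a/2)=\sinh a/(1+\cosh a)\le \tfrac12\sinh a$.

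Two caveats, both of which reflect typos in the printed statement rather than gaps in your argument. First, your characteristic equation carries $2\delta D\sinh a$, whereas the printed definition of $\tilde a$ has $2\delta\sinh a$; yours is the correct reading, since with the printed one and $D>1$ the conclusion is false (your exact solution $w_n=C\rho^{\,n-N}$ satisfies the hypothesis yet decays strictly slower than the printed rate), and it is the reading consistent with the printed $\tilde b$ and with how the lemma is invoked in the proofs of Theorem \ref{th-discrete-robstness-random-perturbations-of-autonomous-nonuniform-case} and Lemma \ref{lemma-key-to-prove-robstness-of-discrete}, where the quantity playing the role of $\delta D$ is $\delta(\omega_p)$. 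Second, in the backward case the inhomogeneity must be $\gamma De^{a(n-N)}=\gamma De^{-a|n-N|}$ rather than $\gamma De^{-a(n-N)}$, which grows as $n\to-\infty$ and makes any decaying conclusion impossible (take $\delta=0$); your reflection $n\mapsto N-n$, which flips the kernel shift from $e^{-a|n-k-1|}$ to $e^{-a|n-k+1|}$ and selects the reciprocal root $e^{-\tilde b}=1/\rho_+$, silently uses this corrected hypothesis, and it does reproduce both the stated rate $\tilde b$ and the stated constant. If you write the argument up, make both corrections explicit.
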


\par Now, we state a robustness result of exponential dichotomies for nonautonomous random dynamical systems. 

\begin{theorem}[Robustness of exponential dichotomy]\label{th-discrete-robstness-random-perturbations-of-autonomous-nonuniform-case}
	Let $(\psi,\Theta)$ be a discrete nonautonomous random dynamical system with
	an exponential dichotomy with
	bound $K$ and exponent $\alpha$.
	There exists a $\Theta$-invariant map with
	\begin{equation*}
	0\leq \delta(\omega_p) < 
	\frac{1-e^{-\alpha(\omega_p)}}{1+e^{-\alpha(\omega_p)}},
	\hbox{ for each }\omega_p\in \mathbb{Z}\times\Omega,
	\end{equation*}
	for which, 
	if $(\varphi,\Theta)$ is a discrete nonautonomous 
	random dynamical system
	such that
	\begin{equation}\label{th-discrete-robstness-random-perturbations-of-autonomous-nonuniform-case-hypothesis1}
	\sup_{n\in \mathbb{N}} \{K(\omega_p)\|\psi(1,\Theta_n\omega_p)-\varphi(1,\Theta_n\omega_p)\|_{\mathcal{L}(X)}\}
	\leq \delta(\omega_p),
	\end{equation}
	then $(\varphi,\Theta)$ admits an exponential dichotomy with
	bound
	\begin{equation*}
	M(\omega_p):=K(\omega_p) \left(1+\frac{\delta(\omega_p)}{(1-\rho(\omega_p))(1-e^{-\alpha(\omega_p)})}\right)
	\max\{D_1(\omega_p),D_2(\omega_p)\},
	\end{equation*}
	and exponent
	\begin{equation*}
	\tilde{\alpha}(\omega_p):=-\ln(\cosh \alpha (\omega_p)- [\cosh ^2 \alpha(\omega_p)-1-2\delta(\omega_p)\sinh \alpha(\omega_p)]^{1/2}),
	\end{equation*}
	where 
	$\rho(\omega_p):=\delta(\omega_p)(1+e^{-\alpha(\omega_p)})/(1-e^{-\alpha(\omega_p)})$,
	$D_1(\omega_p):=[1-\delta(\omega_p) e^{-\alpha(\omega_p)}/(1-e^{-\alpha(\omega_p)-\tilde{\alpha}(\omega_p)})]^{-1}$,
	$D_2:=[1-\delta(\omega_p) e^{-\tilde{\beta}(\omega_p)}/(1-e^{-\alpha(\omega_p)-\tilde{\beta}(\omega_p)})]^{-1}$
	and
	$\tilde{\beta}(\omega_p):=\tilde{\alpha}(\omega_p)+\ln(1+2\delta(\omega_p)\sinh\alpha(\omega_p))$.
\end{theorem}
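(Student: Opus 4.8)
The plan is to reduce the statement to an application of the admissibility result (Theorem \ref{th-admissibility-for-pertubed-process}) together with the discrete Gr\"onwall inequality (Lemma \ref{lemma-grownwall-inequality}), and then to build the dichotomy projections for $(\varphi,\Theta)$ by hand. Fix $\omega_p\in\mathbb{Z}\times\tilde\Omega$ and set $A(\Theta_n\omega_p):=\psi(1,\Theta_n\omega_p)$ and $B(\Theta_n\omega_p):=\varphi(1,\Theta_n\omega_p)-\psi(1,\Theta_n\omega_p)$, so that the perturbed cocycle is generated by the difference equation $x_{n+1}=A(\Theta_n\omega_p)x_n+B(\Theta_n\omega_p)x_n$, and hypothesis \eqref{th-discrete-robstness-random-perturbations-of-autonomous-nonuniform-case-hypothesis1} reads exactly $\|B(\Theta_n\omega_p)\|_{\mathcal{L}(X)}\le\delta(\omega_p)K(\omega_p)^{-1}$ for all $n$. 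I would choose the same $\Theta$-invariant $\delta$ as in Theorem \ref{th-admissibility-for-pertubed-process}, so that $\rho(\omega_p)=\delta(\omega_p)(1+e^{-\alpha(\omega_p)})/(1-e^{-\alpha(\omega_p)})<1$; note that $\rho(\omega_p)$ is precisely the contraction constant already obtained in the proof of that theorem.

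Next I would construct the stable subspace of the perturbed system at each base point. For $\xi\in R(\Pi^s(\omega_p))$, consider on $l^\infty(\mathbb{Z}_0^+)$ the operator $(\mathcal{T}_\xi x)_n=\psi_{n,0}(\omega_p)\xi+\sum_{k\ge 0}G_{\omega_p}(n,k+1)B(\Theta_k\omega_p)x_k$. The estimates in the proof of Theorem \ref{th-admissibility-for-pertubed-process} show that $\mathcal{T}_\xi$ is a $\rho(\omega_p)$-contraction, so it has a unique bounded fixed point $x^\xi$, which is the forward solution of the perturbed equation starting from $\xi$ in the stable directions. The map $\xi\mapsto x_0^\xi$ is linear and bounded, and its range defines the new stable subspace $S(\omega_p)$; the analogous backward fixed point on $l^\infty(\mathbb{Z}_0^-)$, using the unstable part of the Green function, produces the new unstable subspace $U(\omega_p)$. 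I would then verify $X=S(\omega_p)\oplus U(\omega_p)$ (using the smallness of $\delta$ to show the associated graph maps are well defined and the sum is direct) and define $\tilde\Pi^s(\omega_p)$ to be the projection onto $S(\omega_p)$ along $U(\omega_p)$, with $\tilde\Pi^u=Id_X-\tilde\Pi^s$. The equivariance $\tilde\Pi^s(\Theta_1\omega_p)\varphi(1,\omega_p)=\varphi(1,\omega_p)\tilde\Pi^s(\omega_p)$ follows from uniqueness of bounded solutions together with the shift covariance of $\mathcal{T}_\xi$ under $\Theta$, and the isomorphism property (item (3) of the dichotomy definition) from the same uniqueness argument run backwards.

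To obtain the exponents and the bound, I would feed the decay information into the Gr\"onwall inequality. Setting $u_n=\|x^\xi_n\|_X$, the fixed-point identity gives an estimate of the exact form required by Lemma \ref{lemma-grownwall-inequality} with $D=K(\omega_p)$, $a=\alpha(\omega_p)$ and the same $\delta(\omega_p)$; its conclusion yields the forward rate $e^{-\tilde\alpha(\omega_p)(n-N)}$ with $\tilde\alpha$ exactly the stated exponent, and the backward half-line yields $e^{-\tilde\beta(\omega_p)(N-n)}$. Tracking the multiplicative constants through the contraction (a factor $(1-\rho(\omega_p))^{-1}$ and the geometric sum $\delta/((1-\rho)(1-e^{-\alpha}))$) and through the Gr\"onwall step (the factors $D_1,D_2$) assembles the claimed bound $M(\omega_p)$. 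Since $K,\alpha,\delta$ are $\Theta$-invariant and $M,\tilde\alpha$ are built from them by $\Theta$-invariant operations, they are $\Theta$-invariant as required; strong measurability of $\omega\mapsto\tilde\Pi^s(\tau,\omega)$ follows because the fixed points $x^\xi$ are uniform limits of Banach iterates of strongly measurable maps.

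The main obstacle I expect is not any single estimate but the bookkeeping that turns the qualitative roughness argument into the explicit constants: one must align the contraction constant $\rho$, the geometric-series bound from the admissibility theorem, and the precise Gr\"onwall output $(\tilde\alpha,\tilde\beta,D_1,D_2)$ so that they combine into $M(\omega_p)$ and $\tilde\alpha(\omega_p)$ exactly as stated, all while carrying the dependence on $\omega_p$ in a $\Theta$-invariant and measurable way. Verifying the direct-sum decomposition $X=S(\omega_p)\oplus U(\omega_p)$ and the equivariance of the projections, rather than merely their existence, is the second delicate point, since in the random nonautonomous setting one cannot appeal to compactness and must instead rely on uniqueness of bounded solutions and the covariance of the fixed-point operators under the driving flow $\Theta$.
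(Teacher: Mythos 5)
Your proposal is correct and rests on exactly the same machinery as the paper's proof: the admissibility result of Theorem \ref{th-admissibility-for-pertubed-process} (the Green-function convolution operator being a $\rho(\omega_p)$-contraction on $l^\infty$), the discrete Gr\"onwall inequality of Lemma \ref{lemma-grownwall-inequality} producing the exponents $\tilde\alpha,\tilde\beta$ and the factors $D_1,D_2$, the two-stage bootstrapping of the uniform bound $K/(1-\rho)$ into the sharper exponential estimate that yields $M(\omega_p)$, and the measurability argument via Banach iterates of strongly measurable operators. The one place where you genuinely diverge is the construction of the invariant splitting. The paper (Lemma \ref{lemma-key-to-prove-robstness-of-discrete}) defines $V^+(\omega_p)$ intrinsically as the set of points with bounded forward orbit and $V^-(\omega_p)$ as the set admitting backwards bounded solutions, and obtains $X=V^+\oplus V^-$ essentially for free by feeding the impulse $f_{-1}=z/K(\omega_p)$, $f_n=0$ otherwise, into the admissibility theorem: the unique bounded solution of the inhomogeneous equation automatically splits $z$ into a forward-bounded piece and a backwards-bounded piece, and uniqueness of bounded complete solutions gives $V^+\cap V^-=\{0\}$. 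You instead build $S(\omega_p)$ and $U(\omega_p)$ as Lyapunov--Perron graphs over $R(\Pi^s(\omega_p))$ and $R(\Pi^u(\omega_p))$ via half-line fixed points, which forces you to run an extra small-norm fixed-point argument to verify that the two graphs span $X$ and intersect trivially. Both routes work; your graph construction has the advantage of exhibiting the new subspaces explicitly as perturbations of the old ones (so the continuity of projections in Theorem \ref{th-continuity-projection-discrete} is almost visible from the construction), while the paper's impulse device disposes of the direct-sum verification, which you correctly flag as your most delicate step, in a single application of the admissibility theorem. Neither difference affects the constants, which you track correctly.
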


\begin{remark}
	The proof of Theorem \ref{th-discrete-robstness-random-perturbations-of-autonomous-nonuniform-case} 
	is entirely based on the proof of Zhou \textit{et al.} in \cite{Zhou-Lu-Zhang-1}. 
	We remark that their proof works for any co-cycle defined of a noncompact symbol space. 
	In fact, let 
	$\varphi$ be a linear co-cycle driving by a flow $\Sigma\times \Omega \ni (\sigma,\omega)\mapsto\Theta_t(\sigma,\omega)=(\theta^1_t\sigma,\theta^2_t\omega)$, where 
	$\theta^1_t:\Sigma\to \Sigma$ is a flow in a metric space $\Sigma$ and
	$\theta^2_t:\Omega\to \Omega$ is a random flow, and $t\in \mathbb{Z}$. 
	Then, following the ideas of \cite{Zhou-Lu-Zhang-1} and this present work, it is possible to provide a suitable definition of tempered exponential dichotomy for a general linear co-cycle $(\varphi,\Theta)_{(X,\Sigma\times \Omega)}$ and to prove a robustness result for it. 
	\par In this work we choose to work with the case where the bound $K$ is $\Theta$-invariant, because we want to understand the effect of a bounded noise on the hyperbolicity of an autonomous problem, and therefore it is not expect to obtain tempered exponential dichotomies which means that, the mapping $t\mapsto K(\theta_t\omega_p)$ has a sub-exponential growth, see \cite{Zhou-Lu-Zhang-1}.
\end{remark}
\par The proof of Theorem \ref{th-discrete-robstness-random-perturbations-of-autonomous-nonuniform-case} follows the same line of arguments of \cite[Theorem 13]{Zhou-Lu-Zhang-1}. One of the main differences is that we use the evolution processes $\Phi_{\omega_p}$ associated with a given co-cycle $\varphi$, see Remark \ref{remark-associated-processes}. This simple fact provides fundamental ideas for the proof on the continuous case (Subsection \ref{subsec-ed-continuous}), and makes the writing much simpler for the discrete case. Some of the details of the proof are included for the readers convenience.
\par To prove Theorem 
\ref{th-discrete-robstness-random-perturbations-of-autonomous-nonuniform-case},
we first prove a lemma that provides a decomposition of the space $X$
associated with the linear nonautonomous random dynamical system generated by the perturbed homogeneous problem
\eqref{eq-random-perturbed-nonhomogen-linear}.

\begin{lemma}\label{lemma-key-to-prove-robstness-of-discrete}
	Assume conditions of Theorem 
	\ref{th-admissibility-for-pertubed-process}
	are satisfied. Then, for each 
	$\omega_p\in \mathbb{Z}\times\tilde{\Omega}$, 
	$X$ admits 
	a decomposition
	\begin{equation*}
	X=V^+(\omega_p)\bigoplus V^-(\omega_p).
	\end{equation*}
	Furthermore, $\psi(n,\omega_p) V^+(\omega_p)\subset V^+(\Theta_n\omega_p)$, 
	$\psi(n,\omega_p) V^-(\omega_p)=V^-(\Theta_n\omega_p)$ and\\
	$\psi(n,\omega_p)|_{V^-(\omega_p)}$ is an isomorphism for each $n\geq 0$,
	where $(\psi,\Theta)$ is the co-cycle associated with $A+B$ in problem
	\eqref{eq-random-perturbed-nonhomogen-linear}.
\end{lemma}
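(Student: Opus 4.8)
The plan is to realize $V^+(\omega_p)$ and $V^-(\omega_p)$ as the sets of initial data, at the base index $n=0$ corresponding to $\omega_p$, that generate respectively \emph{forward-bounded} and \emph{backward-bounded} solutions of the perturbed homogeneous equation $x_{n+1}=(A+B)(\Theta_n\omega_p)x_n$, and to produce these solutions by the same Green-function fixed-point scheme used in Theorem \ref{th-admissibility-for-pertubed-process}. Fix $\omega_p\in\mathbb{Z}\times\tilde\Omega$ and write $G_{\omega_p}$, $\Pi^s$, $\Pi^u$, $K$, $\alpha$ for the data of the (unperturbed) exponential dichotomy of the co-cycle generated by $A$, reserving $\psi$ for the perturbed co-cycle generated by $A+B$. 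For $a\in R(\Pi^s(\omega_p))$ I would introduce, on $l^\infty(\mathbb{Z}_0^+)$, the operator
\[ (\Gamma_a^+ x)_n=\varphi(n,\omega_p)a+\sum_{k=0}^{+\infty}G_{\omega_p}(n,k+1)B(\Theta_k\omega_p)x_k,\qquad n\geq 0, \]
and check, exactly as in the proof of Theorem \ref{th-admissibility-for-pertubed-process} (the smallness $\|B(\Theta_k\omega_p)\|_{\mathcal{L}(X)}\le\delta(\omega_p)K(\omega_p)^{-1}$ together with the dichotomy bound), that $\Gamma_a^+$ maps $l^\infty(\mathbb{Z}_0^+)$ into itself and is a contraction; its unique fixed point $x^a$ is then a forward-bounded solution of the perturbed equation with prescribed stable part $a$. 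Symmetrically, for $b\in R(\Pi^u(\omega_p))$ the analogous operator $\Gamma_b^-$ summing over $k\le -1$ on $l^\infty(\mathbb{Z}_0^-)$ produces a unique backward-bounded solution $y^b$. I would then set $V^+(\omega_p):=\{x^a_0:a\in R(\Pi^s(\omega_p))\}$ and $V^-(\omega_p):=\{y^b_0:b\in R(\Pi^u(\omega_p))\}$, both linear subspaces since the fixed point depends linearly on its parameter.

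The key step is the decomposition $X=V^+(\omega_p)\oplus V^-(\omega_p)$, which I would obtain through a graph representation. Evaluating the fixed-point identity at $n=0$ gives $x^a_0=a+\sum_{k\ge 0}G_{\omega_p}(0,k+1)B(\Theta_k\omega_p)x^a_k$; since $G_{\omega_p}(0,k+1)=-\varphi_{0,k+1}(\omega_p)\Pi^u(\Theta_{k+1}\omega_p)$ for $k\ge 0$ has range inside $R(\Pi^u(\omega_p))$, one reads off $\Pi^s(\omega_p)x^a_0=a$. Hence $a\mapsto x^a_0$ is injective and $V^+(\omega_p)=\{a+h^+(a):a\in R(\Pi^s(\omega_p))\}$ is the graph of a bounded linear map $h^+:R(\Pi^s(\omega_p))\to R(\Pi^u(\omega_p))$ whose norm is controlled by $\delta(\omega_p)$; symmetrically $V^-(\omega_p)$ is the graph of a small map $h^-:R(\Pi^u(\omega_p))\to R(\Pi^s(\omega_p))$. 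Writing an arbitrary $\xi=\Pi^s(\omega_p)\xi+\Pi^u(\omega_p)\xi$ and seeking $(a+h^+(a))+(b+h^-(b))=\xi$ reduces to a $2\times 2$ block system on $R(\Pi^s(\omega_p))\times R(\Pi^u(\omega_p))$ that is invertible by a Neumann-series argument because $h^+,h^-$ are small; this yields both existence and uniqueness of the splitting, i.e. the direct sum. Transversality can alternatively be read off from the fact that any $\xi\in V^+(\omega_p)\cap V^-(\omega_p)$ generates a solution bounded on all of $\mathbb{Z}$, which must vanish by the uniqueness clause of Theorem \ref{th-admissibility-for-pertubed-process} with $f\equiv 0$.

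Finally, for the invariance I would argue by shifting the index and re-anchoring the base point. If $x^a$ is the forward-bounded solution through $\xi\in V^+(\omega_p)$, then $\{x^a_{m+n}\}_{m\ge 0}$ is a forward-bounded solution of the perturbed equation based at $\Theta_n\omega_p$ taking the value $\psi(n,\omega_p)\xi$ at $m=0$, whence $\psi(n,\omega_p)\xi\in V^+(\Theta_n\omega_p)$ and $\psi(n,\omega_p)V^+(\omega_p)\subset V^+(\Theta_n\omega_p)$. For $V^-$, a backward-bounded solution through $\xi\in V^-(\omega_p)$ is continued forward by the co-cycle and re-indexed to give a backward-bounded solution based at $\Theta_n\omega_p$ through $\psi(n,\omega_p)\xi$, while conversely every element of $V^-(\Theta_n\omega_p)$ is reached by re-anchoring its backward solution $n$ steps earlier; this gives $\psi(n,\omega_p)V^-(\omega_p)=V^-(\Theta_n\omega_p)$ and surjectivity. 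Injectivity of $\psi(n,\omega_p)|_{V^-(\omega_p)}$ follows once more from uniqueness of the trivial bounded full solution: if $\psi(n,\omega_p)\eta=0$ with $\eta\in V^-(\omega_p)$, then gluing the backward-bounded solution through $\eta$ to the identically zero sequence for indices $\ge n$ produces a bounded full solution, forcing $\eta=0$. The main obstacle I anticipate is the decomposition step: verifying that the graph maps $h^\pm$ are genuinely bounded and small and that the resulting $2\times 2$ system is invertible is exactly where the smallness of $B$ (through $\delta(\omega_p)$ and the constant $\rho(\omega_p)$ appearing in Theorem \ref{th-discrete-robstness-random-perturbations-of-autonomous-nonuniform-case}) must be used; the remaining estimates are a routine transcription of those already established in Theorem \ref{th-admissibility-for-pertubed-process}, and the sharper dichotomy bounds are deferred to the proof of the robustness theorem.
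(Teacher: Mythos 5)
Your proposal is correct in substance but reaches the key decomposition by a genuinely different route. The paper defines $V^+(\omega_p)$ and $V^-(\omega_p)$ intrinsically (bounded forward orbits, respectively existence of a backwards bounded solution), and obtains $X=V^+(\omega_p)\oplus V^-(\omega_p)$ by a single further application of Theorem \ref{th-admissibility-for-pertubed-process}: it feeds in the point-mass inhomogeneity $f_{-1}=z/K(\omega_p)$, $f_n=0$ otherwise, and reads the splitting of $z$ off the unique bounded solution, with the intersection killed by the uniqueness clause at $f\equiv 0$. You instead parametrize $V^\pm(\omega_p)$ by Lyapunov--Perron fixed points over $R(\Pi^s(\omega_p))$ and $R(\Pi^u(\omega_p))$, exhibit them as graphs of small maps $h^\pm$, and invert a $2\times 2$ block system by Neumann series. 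Your version buys extra structure (the subspaces are isomorphic to the unperturbed stable/unstable ranges, with an explicit bound on the tilt $h^\pm$), at two costs. First, the Neumann step needs $\|h^+\|\,\|h^-\|<1$, and since $\|h^\pm\|\lesssim \delta e^{-\alpha}(1-e^{-\alpha})^{-1}(1-\rho)^{-1}$ blows up as $\rho(\omega_p)\to 1$, you may have to shrink $\delta$ below the threshold $(1-e^{-\alpha})/(1+e^{-\alpha})$ that suffices for the paper's argument; this is harmless for the statement (which only asserts existence of some admissible $\delta$) but quantitatively weaker. Second, your invariance and injectivity arguments tacitly identify the graph-parametrized sets with the dynamically defined ones, i.e.\ you need the converse statement that \emph{every} forward-bounded (resp.\ backward-bounded) solution of the perturbed equation is a fixed point of $\Gamma_a^+$ (resp.\ $\Gamma_b^-$) with $a=\Pi^s(\omega_p)x_0$; this is exactly the variation-of-constants representation that the paper derives in its Step 1 and should be stated explicitly rather than absorbed into the ``routine transcription.'' Your gluing argument for injectivity of $\psi(n,\omega_p)|_{V^-(\omega_p)}$ (extend the backward solution by zero past index $n$ and invoke uniqueness of the trivial bounded full solution) is a clean alternative to the paper's quantitative estimate \eqref{eq-prop-claim2-for-V^-}, though note the paper's estimate is reused later in the robustness theorem, so you would still need to establish it there.
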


\begin{proof}
	\par Thanks to Theorem \ref{th-admissibility-for-pertubed-process}, 
	there exists a $\Theta$-invariant measurable map $\delta$ such that
	for each perturbation satisfying condition
	\eqref{th-admissibility-for-pertubed-process}, the pair
	$(l^\infty(\mathbb{Z}),l^\infty(\mathbb{Z}))$ is admissible for
	\eqref{eq-discrete-perturbed-random-equation}, for each $\omega_p\in \mathbb{Z}\times \tilde{\Omega}$. 
	\par Hence, take $(\psi,\Theta)$ a co-cycle satisfying
	\eqref{th-discrete-robstness-random-perturbations-of-autonomous-nonuniform-case-hypothesis1}
	and let $B(\omega_p):=\psi(1,\omega_p)-\varphi(1,\omega_p)$ be a linear bounded perturbation
	and consider problem
	\eqref{eq-random-perturbed-nonhomogen-linear}. Also, for each 
	$\omega_p$ we define the evolution process
	$\Psi_{\omega_p}:=\{\psi_{n,m}(\omega_p): n\geq m\}$ associated with $(\psi,\Theta)$.
	\par Define the following sets
	\begin{eqnarray*}
		V^+(\omega_p)&:=&\{z\in X \, ; \, 
		\sup_{n\in \mathbb{N}}\|\psi(n,\omega_p)z\|_{\mathcal{L}(X)} <+\infty \},\\
		V^-(\omega_p)&:=&\{z\in X \, ; \, \hbox{ there is a backwards bounded solution for } 
		\Psi_{\omega_p} \hbox{ through }z \}.
	\end{eqnarray*}
	These are the candidates to be the subspaces that provide the desire decomposition. 
	We prove this fact in four steps.
	\par \textbf{Step 1:} $V^+(\omega_p)$ and $V^-(\omega_p)$ are closed subspaces, 
	for each $\omega_p\in \mathbb{Z}\times \tilde{\Omega}$.
	\par First, note that as $\psi(n,\omega_p)$ is a bounded linear operator it follows that for each 
	$\omega_p$ fixed we have that
	$V^+(\omega_p)$ is a subspace of $X$. 
	We prove now that it is closed. 
	Let $z\in V^+(\omega_p)$, then
	$x_n(\omega_p):=\psi(n,\omega_p)z$ is a bounded solution of
	\eqref{eq-random-perturbed-nonhomogen-linear} for $n\in \mathbb{N}$.
	Then $x(\cdot,\omega_p)$ satisfies
	\begin{eqnarray*}
		x_n(\omega_p)&=&\varphi(n,\omega_p)\Pi^s(\omega_p)z+
		\sum_{k=0}^{n-1}\varphi(n-k-1,\Theta_{k+1}\omega_p)\Pi^s(\theta_{k+1}\omega_p)
		B(\Theta_{k}\omega_p)x_k(\omega_p)\\
		&+&\sum_{k=n}^{+\infty}\varphi(n-k-1,\Theta_{k+1}\omega_p)\Pi^u(\theta_{k+1}\omega_p)B(\Theta_{k}\omega_p)x_k(\omega_p), \ \ n\in \mathbb{N}.
	\end{eqnarray*}
	Hence
	\begin{equation*}
	\|x_n(\omega_p)\|_X\leq K(\omega_p)e^{-\alpha(\omega_p)n}\|z\|_X+
	\delta(\omega_p)\sum_{k=0}^{+\infty}e^{-\alpha(\omega_p)|n-1-k|}
	\|x(k,\omega_p)\|_X, \ \ n\in \mathbb{N}.
	\end{equation*}
	Since 
	$\delta(\omega_p)\leq (1-e^{-\alpha(\omega_p)})/(1+e^{-\alpha(\omega_p)})$,
	by Lemma \ref{lemma-grownwall-inequality},
	we obtain
	\begin{equation*}
	\|x_n(\omega_p)\|_X\leq
	\frac{K(\omega_p)\|z\|_X}{1-\delta(\omega_p)e^{-\alpha(\omega_p)}/
		(1-e^{\alpha(\omega_p)+\tilde{\alpha}(\omega_p)})} e^{-\alpha(\omega_p)n},
	\ \ n\in \mathbb{N}.
	\end{equation*}
	Let $\{z_j\}_{j\in\mathbb{N}}$ be a sequence in
	$V^+(\omega_p)$ such that $z_j\to z$ as 
	$j\to +\infty$.
	Note that
	\begin{equation*}
	\|\psi(n,\omega_p)(z_j-z_i)\|_X\leq
	K(\omega_p)D_1(\omega_p) e^{-\alpha(\omega_p)n}\|z_j-z_i\|_X,
	\ \ n\in \mathbb{N},
	\end{equation*}
	which implies that
	$\{\psi(n,\omega_p)(z_j)\}_{j\in\mathbb{N}}$ is a Cauchy sequence in $X$
	for each fixed $\omega_p$ and $n\in \mathbb{N}$.
	Then
%	$\lim_{j\to +\infty}\psi(n,\omega_p)z_j=\psi(n,\omega)z$ which yields to
	\begin{equation*}
	\|\psi(n,\omega)z\|_{X}\leq D_1(\omega_p)K(\omega_p)e^{-\alpha(\omega_p)n}
	\|z\|_X, \hbox{ for } n\in \mathbb{N},
	\end{equation*}
	and therefore $V^+(\omega_p)$ is closed for each
	$\omega_p\in \mathbb{Z}\times \tilde{\Omega}$.
	%The proof for $V^-(\omega_p)$ is analogous and its left for the reader.
	\par Now we show that  $V^-(\omega_p)$ is a closed subspace. 
	Note that, since $\psi(n,\omega_p)$ is a linear operator for all
	$n\in \mathbb{N}$, it follows that $V^-(\omega_p)$ is subspace of $X$.
	Let us prove that it is closed; it will be very similar to the proof for $V^+(\omega_p)$ and, for this reason, some steps will be omitted. 
	We claim that each $z\in V^-(\omega_p)$ satisfies
	\begin{equation}\label{eq-prop-claim2-for-V^-}
	\|\xi(n)\|_{X}\leq D_2(\omega_p)K(\omega_p)e^{\tilde{\beta}(\omega_p)n}
	\|z\|_X, \ \ n\leq 0,
	\end{equation}
	where $\xi$ is a backwards bounded solution of 
	$\Psi_{\omega_p}:=\{\psi(t-s,\Theta_s\omega_p)\}$ through $z$, $D_2,$ and $\tilde{\beta}$
	are in Theorem \ref{th-discrete-robstness-random-perturbations-of-autonomous-nonuniform-case}.
	Indeed, note that
	\begin{eqnarray*}
		\xi(n)&=&\varphi(n,\omega_p)\Pi^u(\omega_p)z-
		\sum_{k=n}^{-1}\varphi(n-k-1,\Theta_{k+1}\omega_p)\Pi^u(\theta_{k+1}\omega_p)
		B(\Theta_{k}\omega_p)\xi(k)\\
		&+&\sum_{k=-\infty}^{n-1}\varphi(n-k-1,\Theta_{k+1}\omega_p)\Pi^s(\theta_{k+1}\omega_p)
		B(\Theta_{k}\omega_p)\xi(k).
	\end{eqnarray*}
	Thus
	\begin{equation*}
	\|\xi(n)\|_X\leq K(\omega_p)e^{\alpha(\omega_p)n}\|z\|_X+
	\delta(\omega_p)\sum_{k=-\infty}^{-1}e^{-\alpha(\omega_p)|n-1-k|}
	\|\xi(k)\|_X, \ \ n\leq 0,
	\end{equation*}
	and Lemma \ref{lemma-grownwall-inequality} implies \eqref{eq-prop-claim2-for-V^-}.
	Then, for a sequence 
	$\{z_j\}_{j\in \mathbb{Z}}$ in $V^-(\omega_p)$ such that
	$z_j\to z$ as $j\to +\infty$, we will show that $z\in V^-(\omega_p)$.
	In fact, for each $j\in \mathbb{N}$ there exists 
	a backwards bounded solution $\xi_j$ of
	$\Psi_{\omega_p}$ such that
	$\xi_j(0)=z_j$. 
	Thanks to \eqref{eq-prop-claim2-for-V^-}
	it is possible to construct a backwards solution $\xi$
	through $z$, such that
	$\xi(n)=\lim_{j\to +\infty}\xi_j(n)$
	for each fixed $n\leq 0$.
	\par To see that $z\in V^-(\omega_p)$ we have to show that
	$\xi$ is bounded. In fact, for all fixed $n\leq 0$ we have
	\begin{eqnarray*}
		\|\xi(n)\|_X&\leq& \|\xi(n)-\xi_j(n)\|_X+\|\xi_j(n)\|_X\\
		&\leq& \|\xi(n)-\xi_j(n)\|_X+K(\omega_p)D_2(\omega_p)e^{\tilde{\beta}(\omega_p)n}\|z_j\|_X,
	\end{eqnarray*}
	which yields to
	\begin{equation*}
	\|\xi(n)\|_X\leq K(\omega_p)D_2(\omega_p)e^{\tilde{\beta}(\omega_p)n}\|z\|_X, \ \  n\leq 0,
	\end{equation*}
	and concludes the proof of the first step.
	\par \textbf{Step 2:} $V^+(\omega_p)$ is positively invariant and 
	$V^-(\omega_p)$ is invariant.
	\par Note that
	if $z\in V^+(\omega_p)$, 
	then
	$\{\psi(k,\omega_p)z\}_{k\in\mathbb{N}}$ 
	is a bounded sequence,
	then for a fixed $m\geq0$, by the co-cycle property
	\begin{equation*}
	\psi(k,\Theta_{m}\omega_p)\psi(m,\omega_p)z=
	\psi(m+k,\omega_p)z
	\end{equation*}
	is an element of the bounded sequence $\{\psi(k,\omega_p)z\}_{k\in\mathbb{N}}$ for each $k>0$,
	and therefore is also bounded. 
	Thus 
	\begin{equation*}
	\psi(m,\omega_p)z\in V^+(\Theta_m \omega_p), \hbox{ and }
	\psi(m,\omega_p)V^+(\omega_p)\subset V^+(\Theta_{m}\omega_p).
	\end{equation*}
	\par Let us prove now that 
	$\psi(k,\omega_p) V^-(\omega_p)\subset V^-(\Theta_k\omega_p)$, for all $k\geq 0$.
	Let $z\in V^-(\omega_p)$ and $\xi$ a backwards bounded solution of
	$\Psi_{\omega_p}$ such that $\xi(0)=z$.
	\par For fixed $k\geq 0$ define $\tilde{z}=\psi(k,\omega_p)z$ and
	\begin{equation*}
	\tilde{\xi}(n)= \left\{ 
	\begin{array}{l l} 
	\psi(n,\omega_p)z, 
	&  \quad \hbox{if } n\in\{0,1,\cdots,k\}
	\\ \xi(n),  
	& \quad \hbox{if } n\leq 0. 
	\end{array} 
	\right.
	\end{equation*} 
	Notice that $\tilde{\xi}$ is a backwards bounded solution for
	$\Psi_{\Theta_{k}\omega_p}$ 
	and $\tilde{\xi}(k)=\tilde{z}$, which implies that
	$\tilde{z}\in V^-(\Theta_k\omega_p)$.
	\par Now, let $z\in V^-(\Theta_k\omega_p)$, 
	then there is $\hat{\xi}$ a backwards bounded solution for 
	$\Psi_{\theta_k\omega_p}$ 
	such that $\hat{\xi}(0)=z$. 
	In particular
	\begin{equation*}
	z=\psi_{0,-k}(\Theta_k\omega_p)\hat{\xi}(-k)=
	\psi(k,\omega_p)\hat{\xi}(-k),
	\end{equation*}
	thus it is enough to prove that $\hat{\xi}(-k)\in V^-(\omega_p)$.
	Define $\xi(n):=\hat{\xi}(n-k)$ for $n\leq 0$. 
	It is clear that $\xi$ is a backwards bounded solution for 
	$\Psi_{\omega_p}$ and through $\hat{\xi}(-k)$. By definition
	we conclude that $\hat{\xi}(-k)\in V^-(\omega_p)$ and Step 2 is complete.
	\par \textbf{Step 3:} $X=V^+(\omega_p)\oplus V^-(\omega_p)$.
	\par Let $z\in V^+(\omega_p)\cap V^-(\omega_p)$. Then by the definition of these subspaces 
	we see that there is a bounded global solution 
	$\xi$ for $\Psi_{\omega_p}$
	through $z$. 
	On the other hand, applying Theorem \ref{th-admissibility-for-pertubed-process} with 
	$f=0$, we know that the only complete bounded solution is the $\xi=0$, thus by uniqueness we have that $\xi(n)=0$, for all $n\in\mathbb{Z}$, in particular
	$z=0$. Then $V^+(\omega_p)\cap V^-(\omega_p)=\{0\}$.
	\par Let $z\in X$ and define $f_n=0$ for all $n\neq -1$, and $f_{-1}=z/K(\omega_p)$.
	Thus by Theorem \ref{th-admissibility-for-pertubed-process} there exists 
	$x(\cdot,\omega_p)=\{x_n(\omega_p)\}_{n\in \mathbb{Z}}\in l^\infty(\mathbb{Z})$ the solution of \eqref{eq-random-perturbed-nonhomogen-linear}. 
	This solution satisfies for $n\geq m$
	\begin{equation*}
	x_n(\omega_p)=\psi_{n,m}(\omega_p)x_m(\omega_p
	)+\sum_{k=m}^{n-1}\psi(n-k-1,\Theta_{k+1}\omega_p)f_k.
	\end{equation*}
	Rewriting this we obtain
	\begin{eqnarray*}
		x_n(\omega_p)&=&\psi(n,\omega_p)x_{0}(\omega_p), \ \  n\geq 0, \\
		x_{0}(\omega_p)&=&\psi(1,\Theta_{-1}\omega_p)x_{-1}(\omega_p) +z/K(\omega_p), \\
		x_n(\omega_p)&=&\psi_{n,m}(\omega_p) x_m, \ \ m\leq n\leq -1.
	\end{eqnarray*}
	Thus, since $x$ is bounded, we see that $x_{0}(\omega_p)\in V^+(\omega_p)$.
	\par Note that $\psi(1,\Theta_{-1}\omega_p)x_{-1}(\omega_p)\in V^-(\omega_p)$.
	In fact, define
	$\xi(0):=x_0(\omega_p)-z/K(\omega_p)$ and $\xi(n):=x_n(\omega_p)$ for $n\leq-1$. 
	Then $\xi$ is a backwards bounded solution through 
	$\psi(1,\Theta_{-1}\omega_p)x_{-1}(\omega_p)=x_{0}(\omega_p)-z/K(\omega_p)$,
	which means that
	$x_{0}(\omega_p)-z/K(\omega_p)\in V^-(\omega_p)$.
	\par Therefore, 
	\begin{equation*}
	z=x_{0}(\omega_p)K(\omega_p) - (x_{0}(\omega_p)K(\omega_p)-z)\in V^+(\omega_p)+ V^-(\omega_p),
	\end{equation*} 
	which completes the proof of Step 3.
	\par \textbf{Step 4: } $\psi(m,\omega_p)|_{V^-(\omega_p)}: V^-(\omega_p)\rightarrow V^-(\Theta_m\omega_p)$ is an isomorphism, 
	for $m\geq 0$.
	\par By Step 2, we already have that $\psi(m,\omega_p)|_{V^-(\omega_p)}$ is surjective,
	so now we
	show that is injective. 
	Given $z\in V^-(\omega_p)$, then from 
	the proof of Step 2, we know that there exists a backwards bounded solution 
	$\tilde{\xi}$ of
	$\Psi_{\Theta_m\omega_p}$ such that
	$\tilde{\xi}(0)=\psi(m,\omega_p)z$ and 
	$\tilde{\xi}(-m)=z$.
	Thus, from \eqref{eq-prop-claim2-for-V^-}
	\begin{equation*}
	\|\tilde{\xi}(n)\|_X\leq D_2(\omega_p)K(\Theta_m\omega_p)e^{\tilde{\beta}(\omega_p)m}\|\psi(m,\omega_p)z\|_X,
	\ \ n\leq 0.
	\end{equation*}
	In particular, 
	\begin{equation*}
	\|z\|_X\leq D_2(\omega_p)K(\Theta_m\omega_p)e^{-\tilde{\beta}(\omega_p)m}\|\psi(m,\omega_p)z\|_X.
	\end{equation*}
	Hence, $\psi(m,\omega_p)|_{V^-(\omega_p)}$ is injective, and the proof of the lemma
	is complete.
\end{proof}

\par Now, we are ready to prove our result on the robustness of exponential dichotomies for discrete nonautonomous random dynamical systems:
\medskip
\par \noindent \textbf{Proof of Theorem
	\ref{th-discrete-robstness-random-perturbations-of-autonomous-nonuniform-case}.}
From Lemma \ref{lemma-key-to-prove-robstness-of-discrete}, for each 
$\omega_p\in \mathbb{Z}\times \tilde{\Omega}$, we have
$X=V^+(\omega_p)\oplus V^-(\omega_p)$, so define
$\widetilde{\Pi}^s(\omega_p)$ as the projection from $X$ onto 
$V^+(\omega_p)$, and
$\widetilde{\Pi}^u(\omega_p):=Id_X-\widetilde{\Pi}^s(\omega_p)$.
By the invariance of $V^-(\omega_p)$ and positively invariance of $V^+(\omega_p)$ we have that
\begin{equation*}
\psi(n,\omega_p)\widetilde{\Pi}^s(\omega_p)=
\widetilde{\Pi}^s(\Theta_n\omega_p)\psi(n,\omega_p), \hbox{ for all }n\in \mathbb{N}.
\end{equation*}
Again, by Lemma \ref{lemma-key-to-prove-robstness-of-discrete},
$\psi(n,\omega_p)|_{R(\widetilde{\Pi}^u(\omega_p))}$ is an isomorphism and we define\\
$\psi(-n,\Theta_n\omega_p)$ as the inverse of the map 
$\psi(n,\omega_p)|_{V^-(\omega_p)}$.
\par Let $z\in X$ and define $f_n=0$ for all $n\neq -1$, and $f_{-1}=z/K(\omega_p)$.
Then Equation \eqref{eq-discrete-perturbed-random-equation} has a unique bounded solution
$x(\cdot,\omega_p)$, with $x_n(\omega_p)\in V^+(\Theta_n\omega_p)$ for all $n\in\mathbb{N}$,
$x_n(\omega_p)\in V^-(\Theta_n\omega_p)$ for all $n\leq -1$, and
$x_0(\omega_p)-z/K(\omega_p)\in V^-(\omega_p)$. 
Note that
\begin{equation}\label{eq-th-robstness-positive}
x_n(\omega_p)=\frac{1}{K(\omega_p)}\psi(n,\omega_p)\widetilde{\Pi}^s(\omega_p)z, \hbox{ for all }n\in \mathbb{N},
\end{equation}
and
\begin{equation}\label{eq-th-robstness-negative}
x_m(\omega_p)=\frac{-1}{K(\omega_p)}\psi(m,\omega_p)\widetilde{\Pi}^u(\omega_p)z, \hbox{ for all }m\leq -1.
\end{equation}
\par Now, since $x(\cdot,\omega_p)$ is a solution of 
$x_{n+1}=A(\Theta_n\omega_p)x_n+B(\Theta_n\omega_p)x_n+f_n$, for all 
$n\in \mathbb{Z}$ we have that
\begin{equation*}
x_n(\omega_p)=G_{\omega_p}(n,0)f_{-1}+\sum_{-\infty}^{+\infty}G_{\omega_p}(n,k+1) B(\Theta_k\omega_p)x_k(\omega_p),
\end{equation*}
where $G$ is the Green function associated with the co-cycle $(\varphi,\Theta)$ and family of 
projections $\Pi^s$. Hence,
%\begin{equation*} 
%\begin{array}{l l} 
%\|x_n(\omega_p)\|_X \leq e^{-\alpha(\omega_p)|n|}\|z\|_X +\sum_{-\infty}^{+\infty}
%K(\omega_p)e^{-\alpha(\omega_p)|n-k-1|} 
%\|B(\Theta_k\omega_p)\|_{\mathcal{L}(X)} \|x_k(\omega_p)\|_X\\
%\leq e^{-\alpha(\omega_p)|n|}\|z\|_X+ \|x(\cdot,\omega_p)\|_{l^\infty(\mathbb{Z})}\delta(\omega_p)
%\frac{1+e^{-\alpha(\omega_p)}}{1-e^{-\alpha(\omega_p)}}.
%\end{array} 
%\end{equation*}
\begin{eqnarray*}
	\|x_n(\omega_p)\|_X&\leq &e^{-\alpha(\omega_p)|n|}\|z\|_X +\sum_{-\infty}^{+\infty}
	K(\omega_p)e^{-\alpha(\omega_p)|n-k-1|} 
	\|B(\Theta_k\omega_p)\|_{\mathcal{L}(X)} \|x_k(\omega_p)\|_X\\
	&\leq &e^{-\alpha(\omega_p)|n|}\|z\|_X+ \|x(\cdot,\omega_p)\|_{l^\infty(\mathbb{Z})}\delta(\omega_p)
	\frac{1+e^{-\alpha(\omega_p)}}{1-e^{-\alpha(\omega_p)}}.
\end{eqnarray*}
Thus,
\begin{equation*}
\|x(\cdot,\omega_p)\|_{l^\infty(\mathbb{Z})}\leq \frac{\|z\|_X}{1-\rho(\omega_p)},
\end{equation*}
where $\rho(\omega_p)=\delta(\omega_p)(1+e^{-\alpha(\omega_p)})/(1-e^{-\alpha(\omega_p)})$.
In particular, from \eqref{eq-th-robstness-positive}
\begin{equation}\label{eq-th-robstness-positive-uniform}
\|\psi(n,\omega_p)\widetilde{\Pi}^s(\omega_p)z\|_X\leq 
\frac{K(\omega_p)}{1-\rho(\omega_p)}\|z\|_X, \ n\geq 0,
\end{equation}
and from \eqref{eq-th-robstness-negative}
\begin{equation}\label{eq-th-robstness-negative-uniform}
\|\psi(m,\omega_p)\widetilde{\Pi}^u(\omega_p)z\|_X\leq 
\frac{K(\omega_p)}{1-\rho(\omega_p)}\|z\|_X, \ m\leq -1.
\end{equation}
\par Now, we use \eqref{eq-th-robstness-negative-uniform} to obtain a better estimate than \eqref{eq-th-robstness-positive-uniform} for the norm of
$\psi(n,\omega_p)\widetilde{\Pi}^s(\omega_p)$.
Since
$\widetilde{\Pi}^s(\omega_p)z\in V^+(\omega_p)$, we know that
$\psi(n,\omega_p)\widetilde{\Pi}^s(\omega_p)z$ defines a bounded solution
for 
\begin{equation*}
x_{n+1}(\omega_p)=A(\Theta_n\omega_p)x_n(\omega_p)+B(\Theta_n\omega_p)x_n(\omega_p), \ \  n\geq 0,
\end{equation*}
and since 
$\widetilde{\Pi}^u(\omega_p)z\in V^-(\omega_p)$, there exists
$\xi:(-\infty,0]\cap\mathbb{Z}\rightarrow X$ backwards bounded solution of
$\Psi_{\omega_p}$ such that
$\xi(0)=\widetilde{\Pi}^u(\omega_p)z$. 
In particular,
$\xi(n):=\psi(n,\omega_p)\widetilde{\Pi}^u(\omega_p)z$, for $n\leq -1.$
Hence
\begin{eqnarray*}
\psi(n,\omega_p)\widetilde{\Pi}^s(\omega_p)z&=&
\varphi(n,\omega_p)\Pi^s(\omega_p)\widetilde{\Pi}^s(\omega_p)z\\
&+&\sum_{k=0}^{+\infty}
G_{\omega_p}(n,k+1)B(\Theta_k\omega_p)\psi(k,\omega_p)\widetilde{\Pi}^s(\omega_p)z,  \ n\geq 0,
\end{eqnarray*}
and
\begin{eqnarray*}
	\psi(n,\omega_p)\widetilde{\Pi}^u(\omega_p)z&=&
	\varphi(n,\omega_p)\Pi^u(\omega_p)\widetilde{\Pi}^u(\omega_p)z\\
	&+&\sum_{k=-\infty}^{-1} G_{\omega_p}(n,k+1)B(\Theta_k\omega_p)\psi(k,\omega_p)\widetilde{\Pi}^u(\omega_p)z, \  n\leq 0.
\end{eqnarray*}
For $n=0$ we obtain
\begin{eqnarray}\label{eq-3.39}
& &\widetilde{\Pi}^u(\omega_p)z=
\Pi^u(\omega_p)\widetilde{\Pi}^u(\omega_p)z\\
& &+\sum_{k=-\infty}^{-1} \varphi(-k-1,\Theta_{k+1}\omega_p)\Pi^s(\Theta_{k+1}\omega_p)B(\Theta_k\omega_p)\psi(k,\omega_p)\widetilde{\Pi}^u(\omega_p)z.
\end{eqnarray}
Thus
\begin{eqnarray*}
	\psi(n,\omega_p)\widetilde{\Pi}^s(\omega_p)z&=&
	\varphi(n,\omega_p)\Pi^s(\omega_p)z-\varphi(n,\omega_p)\Pi^s(\omega_p)\widetilde{\Pi}^u(\omega_p)z\\
	&+&\sum_{k=0}^{+\infty}
	G_{\omega_p}(n,k+1)B(\Theta_k\omega_p)\psi(k,\omega_p)\widetilde{\Pi}^s(\omega_p)z,  \ n\geq 0,
\end{eqnarray*}
hence, from \eqref{eq-3.39}
\begin{eqnarray*}
	& &\psi(n,\omega_p)\widetilde{\Pi}^s(\omega_p)z=
	\varphi(n,\omega_p)\Pi^s(\omega_p)z\\
	& &-\sum_{k=-\infty}^{-1} \varphi(n-k-1,\Theta_{k+1}\omega_p)\Pi^s(\Theta_{k+1}\omega_p)B(\Theta_k\omega_p)\psi(k,\omega_p)\widetilde{\Pi}^u(\omega_p)z\\
	& &+\sum_{k=0}^{+\infty}
	G_{\omega_p}(n,k+1)B(\Theta_k\omega_p)\psi(k,\omega_p)\widetilde{\Pi}^s(\omega_p)z,  \ n\geq 0.
\end{eqnarray*}
Thus, from \eqref{eq-th-robstness-negative-uniform}
\begin{eqnarray*}
	\|\psi(n,\omega_p)\widetilde{\Pi}^s(\omega_p)\|_{\mathcal{L}(X)}&\leq&
	K(\omega_p)\Bigg( 1+\frac{\delta(\omega_p)}{(1-\rho(\omega_p))(1-e^{\alpha(\omega_p)})}\Bigg)e^{-\alpha(\omega_p)n}\\
	&+&\delta(\omega_p)\sum_{k=0}^{+\infty}e^{-\alpha(\omega_p)|n-k-1|}
	\|\psi(k,\omega_p)\widetilde{\Pi}^s(\omega_p)\|_{\mathcal{L}(X)},  \ n\geq 0.
\end{eqnarray*}
Again, from \eqref{eq-th-robstness-negative-uniform} we have that 
\begin{equation*}
\|\psi(n,\omega_p)\widetilde{\Pi}^s(\omega_p)\|_{\mathcal{L}(X)}\leq K_1(\omega_p):=1+K(\omega_p)/(1-\rho(\omega_p)),  \ \forall n\geq 0.
\end{equation*}
Hence, 	$u_n:=K_1(\omega_p)^{-1}\|\psi(n,\omega_p)\widetilde{\Pi}^s(\omega_p)\|_{\mathcal{L}(X)}$ is uniformly bounded
for each $n\geq 0$ and $\omega_p$. 
Thus, 
\begin{equation*}
u_n\leq \frac{K(\omega_p)}{K_1(\omega_p)}
\Bigg[1+\frac{\delta(\omega_p)}{(1-\rho(\omega_p))(1-e^{-\alpha(\omega_p)})}\Bigg]
e^{-\alpha(\omega_p)n}+
\delta(\omega_p)\sum_{k=0}^{+\infty}e^{-\alpha(\omega_p)|n-k-1|}u_k.
\end{equation*}
Then, by Lemma \ref{lemma-grownwall-inequality} 
\begin{equation*}
\|\psi(n,\omega_p)\widetilde{\Pi}^s(\omega_p)\|_{\mathcal{L}(X)}\leq 
\frac{K(\omega_p)\bigg[1+\frac{\delta(\omega_p)}{(1-\rho(\omega_p))(1-e^{-\alpha(\omega_p)})}\bigg]}{
	1-\delta(\omega_p)e^{-\alpha(\omega_p)}/(1-e^{-\alpha(\omega_p)-\tilde{\alpha}(\omega_p)})} e^{-\tilde{\alpha}(\omega_p)n}, \  n\geq 0.
\end{equation*}
Similarly, 
we use \eqref{eq-th-robstness-positive-uniform} and 
\begin{eqnarray*}
	& &\psi(n,\omega_p)\widetilde{\Pi}^u(\omega_p)z=
	\varphi(n,\omega_p)\Pi^u(\omega_p)z\\
	& &+\sum_{k=0}^{+\infty} \varphi(n-k-1,\Theta_{k+1}\omega_p)\Pi^u(\Theta_{k+1}\omega_p)B(\Theta_k\omega_p)\psi(k,\omega_p)\widetilde{\Pi}^s(\omega_p)z\\
	& &+\sum_{k=-\infty}^{-1}
	G_{\omega_p}(n,k+1)B(\Theta_k\omega_p)\psi(k,\omega_p)\widetilde{\Pi}^u(\omega_p)z, \  n\leq 0
\end{eqnarray*}
to obtain
\begin{eqnarray*}
	& &\|\psi(n,\omega_p)\widetilde{\Pi}^u(\omega_p)\|_{\mathcal{L}(X)}\leq
	K(\omega_p)\Bigg( 1+\frac{\delta(\omega_p)e^{-\alpha(\omega_p)}}{(1-\rho(\omega_p))(1-e^{\alpha(\omega_p)})}\Bigg)e^{\alpha(\omega_p)n}\\
	& &+\,\delta(\omega_p)\sum_{k=-\infty}^{-1}e^{-\alpha(\omega_p)|n-k-1|}
	\|\psi(k,\omega_p)\widetilde{\Pi}^u(\omega_p)\|_{\mathcal{L}(X)},  \ n\leq 0;
\end{eqnarray*}
and from Lemma \ref{lemma-grownwall-inequality} 
\begin{equation*}
\|\psi(n,\omega_p)\widetilde{\Pi}^u(\omega_p)\|_{\mathcal{L}(X)}\leq 
\frac{K(\omega_p)\bigg[1+\frac{\delta(\omega_p)e^{-\alpha(\omega_p)}}{(1-\rho(\omega_p))(1-e^{-\alpha(\omega_p)})}\bigg]}{
	1-\delta(\omega_p)e^{-\tilde{\beta}(\omega_p)}/(1-e^{-\alpha(\omega_p)-\tilde{\beta}(\omega_p)})} e^{\tilde{\beta}(\omega_p)n}, \  n\leq 0.
\end{equation*}
\par Finally, we prove that, for each $p\in \mathbb{Z}$,
the operator 
$\Pi^s(p,\cdot):\Omega\to \mathcal{L}(X)$ is strongly measurable.
Note that
\begin{equation}\label{eq-3.51}
\widetilde{G}_{\omega_p}(n,0)z=G_{\omega_p}(n,0)z+
\sum_{k=-\infty}^{+\infty}G_{\omega_p}(n,k+1)B(\Theta_{k}\omega_p)\tilde{G}_{\omega_p}(k,0)z,
\forall n\in \mathbb{Z}, \ z\in X,
\end{equation}
where $\widetilde{G}$ is the Green function of $(\psi,\Theta)$ and family of projection $\widetilde{\Pi}^s$.
Let $T(\omega,p,z)$ be the operator defined on 
$l^{\infty}(\mathbb{Z})$ by
\begin{equation*}
T(\omega,p,z)g_n:=G_{\omega_p}(n,0)z+\sum_{k=-\infty}^{+\infty}
G_{\omega_p}(n,k+1)B(\Theta_{k}\omega_p)g_k.
\end{equation*}
Note that
$T(\omega,p,z)$ has a unique fixed point $g\in l^\infty(\mathbb{Z})$, and
from \eqref{eq-3.51} we know that 
$g=\{\widetilde{G}_{\omega_p}(n,0)z\}_{n\in \mathbb{Z}}$.
Moreover, since $T(\cdot,p,z)$ is $(\mathcal{F}, \mathcal{B}(X))$-measurable
and  
$\{\widetilde{G}_{\omega_p}(n,0)z\}_{n\in \mathbb{Z}}$ is a limit of the iteration of $T(\cdot,p,z)$ starting at $0\in l^\infty(\mathbb{Z})$ we conclude that
$\{\widetilde{G}_{(p,\cdot)}(n,0)z\}_{n\in \mathbb{Z}}$ is a $(\mathcal{F}, \mathcal{B}(X))$-measurable sequence, which implies that 
$\Pi^s(p,\cdot)$ is strongly measurable.
\par Therefore $(\psi,\Theta)$ admits an exponential dichotomy on a $\Theta$-invariant subset $\tilde{\Omega}$ of full measure with projection
$\widetilde{\Pi}^s$, exponent $\tilde{\alpha}$, and bound $\tilde{K}$, the proof is complete.
\qed

\begin{remark}
	Note that, thanks to the approach given by \cite{Zhou-Lu-Zhang-1} we are able to provide
	explicit expression for the bound $\tilde{K}$ and exponent $\tilde{\alpha}$ for the obtained exponential dichotomy. This also happens in the continuous case.
\end{remark}

\par Now, we prove a continuous dependence result of the projections associated with exponential dichotomy for co-cycle. The proof is inspired in \cite[Theorem 7.9]{Carvalho-Langa-Robison-book}.

\begin{theorem}\label{th-continuity-projection-discrete}
	Suppose that  $\varphi$ and $(\psi,\Theta)$ are nonautonomous random dynamical systems and that they
	admit an exponential dichotomy with projections 
	$\Pi_\varphi^s$ and $\Pi_\psi^s$,
	exponents $\alpha_\varphi$
	and 
	$\alpha_\psi$, respectively,
	and with the same bound 
	$K$.
	If %$\nu<\alpha$ and 
	\begin{equation*}
	\sup_{n\in \mathbb{Z}}
	\{K(\omega_p)\|\varphi_n(\omega_p)-\psi_n(\omega_p)\|_{{\mathcal L}(X)}\}\leq
	\epsilon , %\hbox{ for all } n\in {\mathbb Z},
	\end{equation*}
	then
	\begin{equation*}
	\sup_{n\in \mathbb{Z}}  \|\Pi_\varphi^s(\Theta_n\omega_p)-\Pi_\psi^s(\Theta_n\omega_p)\|_{{\mathcal L}(X)}\leq \frac{e^{-\alpha_\psi(\omega_p)} + e^{-\alpha_\varphi(\omega_p)}}{1-e^{-(\alpha_\psi(\omega_p)+\alpha_\varphi(\omega_p))}}\, \epsilon.
	\end{equation*}
\end{theorem}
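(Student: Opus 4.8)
The plan is to express both stable projections through the Green functions of the two systems and to use the elementary fact that the Green function on the diagonal recovers the stable projection. Writing $G^\varphi_{\omega_p}$ and $G^\psi_{\omega_p}$ for the Green functions (in the sense of the definition above) of $\varphi$ and $\psi$, evaluation of the defining formula at $t=s=n$ together with $\varphi_{n,n}(\omega_p)=Id_X$ gives
\begin{equation*}
\Pi_\varphi^s(\Theta_n\omega_p)=G^\varphi_{\omega_p}(n,n),\qquad \Pi_\psi^s(\Theta_n\omega_p)=G^\psi_{\omega_p}(n,n),
\end{equation*}
so that it suffices to estimate the difference of the two Green functions on the diagonal.

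Next I would set $B(\Theta_k\omega_p):=\psi(1,\Theta_k\omega_p)-\varphi(1,\Theta_k\omega_p)$, so that $\psi$ is the perturbation of $\varphi$ by $B$ and the hypothesis reads $\|B(\Theta_k\omega_p)\|_{\mathcal{L}(X)}\le \epsilon\,K(\omega_p)^{-1}$ for every $k$. The core step is the connecting formula between the two Green functions, i.e. the base--point--$m$ analogue of \eqref{eq-3.51}:
\begin{equation*}
G^\psi_{\omega_p}(n,m)=G^\varphi_{\omega_p}(n,m)+\sum_{k=-\infty}^{+\infty}G^\varphi_{\omega_p}(n,k+1)\,B(\Theta_k\omega_p)\,G^\psi_{\omega_p}(k,m).
\end{equation*}
This is proved exactly as the measurability identity \eqref{eq-3.51} at the end of the proof of Theorem \ref{th-discrete-robstness-random-perturbations-of-autonomous-nonuniform-case}: for fixed $m$ and $z\in X$ the sequence $n\mapsto G^\psi_{\omega_p}(n,m)z$ is bounded and solves the $\psi$-equation with a unit source at $k=m-1$; rewriting that equation as the $\varphi$-equation forced additionally by $B(\Theta_k\omega_p)G^\psi_{\omega_p}(k,m)z$ and applying the variation-of-constants representation through $G^\varphi$ from Theorem \ref{th-admissibility-for-pertubed-process} yields the identity, the series converging absolutely by the two exponential dichotomy bounds.

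Putting $n=m$, the leading terms cancel and I obtain
\begin{equation*}
\Pi_\psi^s(\Theta_m\omega_p)-\Pi_\varphi^s(\Theta_m\omega_p)=\sum_{k=-\infty}^{+\infty}G^\varphi_{\omega_p}(m,k+1)\,B(\Theta_k\omega_p)\,G^\psi_{\omega_p}(k,m).
\end{equation*}
Inserting $\|G^\varphi_{\omega_p}(m,k+1)\|\le K(\omega_p)e^{-\alpha_\varphi(\omega_p)|m-k-1|}$, $\|G^\psi_{\omega_p}(k,m)\|\le K(\omega_p)e^{-\alpha_\psi(\omega_p)|k-m|}$ and the bound $\|B(\Theta_k\omega_p)\|\le \epsilon\,K(\omega_p)^{-1}$, the right--hand side is dominated by $\epsilon$ times the two-sided geometric series $\sum_k e^{-\alpha_\varphi(\omega_p)|m-k-1|}e^{-\alpha_\psi(\omega_p)|k-m|}$. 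Evaluating this series is the only genuine computation: after the shift $j=k-m$ it splits into the ranges $j\ge 0$, $j=-1$ and $j\le -2$, and a telescoping cancellation (the $j\le -2$ tail cancels the surplus produced by the $j=-1$ term) collapses it to $(e^{-\alpha_\varphi(\omega_p)}+e^{-\alpha_\psi(\omega_p)})/(1-e^{-(\alpha_\varphi(\omega_p)+\alpha_\psi(\omega_p))})$, which is exactly the constant in the statement. Since the resulting bound does not depend on $m$, running the argument at each shifted base point $\Theta_n\omega_p$ and using the $\Theta$-invariance of $K,\alpha_\varphi,\alpha_\psi$ gives the claimed uniform-in-$n$ estimate.

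The step I expect to require the most care is the rigorous justification of the connecting formula: one must verify that $n\mapsto G^\psi_{\omega_p}(n,m)z$ is precisely the bounded solution selected by $G^\varphi$ when $\varphi$ is perturbed by $B$ — that the admissibility/variation-of-constants identity of Theorem \ref{th-admissibility-for-pertubed-process} applies with the correct source and jump at $k=m-1$ — and that the rearrangement needed to evaluate at $n=m$ is legitimate. Both points rest only on the absolute convergence granted by the two dichotomies, so once the formula is in place the remainder is the algebra and the geometric series above.
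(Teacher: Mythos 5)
Your proposal is correct and is essentially the paper's own argument: the paper likewise feeds the unit impulse $f_{m-1}=z$ into Theorem \ref{th-admissibility-for-pertubed-process}, identifies the resulting bounded solutions with $G^{\varphi}_{\omega_p}(\cdot,m)z$ and $G^{\psi}_{\omega_p}(\cdot,m)z$, represents their difference as one Green function sandwiching $\varphi_k-\psi_k$ against the other, evaluates at $n=m$ where the diagonal values are the stable projections, and sums the same two-sided geometric series to get the stated constant. The differences are cosmetic (you swap the roles of $\varphi$ and $\psi$, solving for $G^{\psi}_{\omega_p}(\cdot,m)z$ in the $\varphi$-system rather than for the difference in the $\psi$-system), and you also inherit the paper's own bookkeeping slip: the two Green-function bounds contribute $K(\omega_p)^2$, which against $\|B(\Theta_k\omega_p)\|\leq \epsilon K(\omega_p)^{-1}$ leaves $\epsilon K(\omega_p)$ rather than $\epsilon$, so both proofs as written actually yield the estimate with an extra factor $K(\omega_p)\geq 1$.
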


\begin{proof}
	Let $z\in X$, $\omega\in \tilde{\Omega}$, and $m,p\in \mathbb{Z}$ be fixed and consider
	\begin{equation*}
	f_n(\omega_p)= \left\{ 
	\begin{array}{l l} 
	0,
	& \quad \hbox{if } n\neq m-1,
	\\ z, \, 
	& \quad \hbox{if } n=m-1.
	\end{array} 
	\right.
	\end{equation*} 
	Thus by Theorem \ref{th-admissibility-for-pertubed-process} for each
	$\omega_p\in \mathbb{Z}\times\tilde{\Omega}$
	there exists a bounded solution $x(\omega_p)=\{x_n(\omega_p)\}_{n\in\mathbb{Z}}$ given by
	$x_n^{j}(\omega_p):=G_{\omega_p}^{j}(n,m) z^{-1}$ for $j=\varphi,\psi$. 
	Note that
	\begin{equation*}
	x_{n+1}^{\varphi} - \psi_n(\omega_p) x_n^{\varphi}=
	\varphi_n(\omega_p) x_n^{\varphi}-\psi_n(\omega_p) x_n^{\varphi}+f_n(\omega_p)  
	\end{equation*}
	and
	%\begin{equation*}
	$	x_{n+1}^{\psi} - \psi_n(\omega_p) x_n^{\psi}=f_n(\omega_p)$.
	%\end{equation*}
	Then, if
	$z_n:=x_n^{\varphi}-x_n^{\psi}$ we obtain that  
	$z_{n+1}=\varphi_n z_n + y_n$,
	where
	$y_n:=(\varphi_n(\omega_p)-\psi_n(\omega_p)x_n^{\varphi}(\omega_p)$ for all $n\in \mathbb{Z}$. 
	Thanks to the boundedness of the sequence $\{x_n^{\varphi}(\omega_p)\}_{n\in\mathbb{Z}}$ 
	and by the hypotheses on $\varphi_n-\psi_n$ we have that 
	$\{y_n\}_{n\in {\mathbb Z}}$ is bounded and by 
	Theorem \ref{th-admissibility-for-pertubed-process} 
	we have that
	\begin{equation*}
	z_n=\sum_{k=-\infty}^\infty G_{\omega_p}^{\psi}(n,k+1)(\omega_p)
	(\varphi_k(\omega_p)-\psi_k(\omega_p))G_{\omega_p}^{\varphi}(k,m) z
	\end{equation*}
	and therefore, by the hypotheses on 
	$\Psi-\Phi$,
	we deduce
	\begin{eqnarray*}
		\|z_m\|_X&\leq&\sum_{k=-\infty}^\infty K(\omega_p) e^{-\alpha_\psi(\omega_p)|m-k-1|}
		e^ {-\alpha_\psi(\omega_p)|k-m|} \|\varphi_k(\omega_p)-\psi_k(\omega_p)\|_{{\mathcal
				L}(X)}\|z\|_{X} \\
		&\leq& \frac{e^{-\alpha_\psi(\omega_p)} + e^{-\alpha_\varphi(\omega_p)}}{1-e^{-(\alpha_\psi(\omega_p)+\alpha_\varphi(\omega_p))}}\, \epsilon \,
		\|z\|_X.
	\end{eqnarray*}
	The definition of $z$ in $m$ yields
	$$
	z_m=x_m^{\varphi}-x_m^{\psi}=(G_{\omega_p}^{\varphi}(m,m)-G_{\omega_p}^{\psi}(m,m))z=
	(\Pi_\varphi^s(\Theta_m\omega_p)-\Pi_\psi^s(\Theta_m\omega_p))z.
	$$
	Consequently,
	\begin{equation*}
	\|(\Pi_\varphi^s(\Theta_m\omega_p)-\Pi_\psi^s(\Theta_m\omega_p))z\|_X\leq \frac{e^{-\alpha_\psi(\omega_p)} + e^{-\alpha_\varphi(\omega_p)}}{1-e^{-(\alpha_\psi(\omega_p)+\alpha_\varphi(\omega_p))}}\, \epsilon \,
	\|z\|_X,
	\end{equation*}
	which concludes the proof of the theorem.
\end{proof}

\subsection{Exponential dichotomy for co-cycle: continuous case}\label{subsec-ed-continuous}
\par We study exponential dichotomies for a 
continuous nonautonomous random dynamical system 
$(\varphi,\Theta)$. 
\par Our goal is to prove a robustness result for nonautonomous random dynamical systems
that possesses a  
uniform exponential dichotomy. This section follows closely the ideas of Chow and Leiva in
\cite{Chow-Leiva-existence-roughness}. However, while they consider a driving flow 
$\theta_t:\Sigma\rightarrow\Sigma$ on a compact Hausdorff space $\Sigma$ in a deterministic context, we deal with a nonautonomous random dynamical systems driven by a flow $\mathbb{R}\times \Omega\ni (\tau,\omega)\mapsto \Theta_t(\tau,\omega)=(t+\tau,\theta_t\omega)\in \mathbb{R}\times \Omega$, where $\theta_t:\Omega\to \Omega$ is a random flow defined on a probability space $\Omega$.

\par We first prove that if a co-cycle possesses an exponential dichotomy, then 
its discretization also admits an exponential dichotomy.

\begin{theorem}\label{th-continousED-implies-discreteED}
	Let $(\varphi,\Theta)_{(X,\mathbb{R}\times\Omega)}$ be a linear co-cycle that admits an exponential dichotomy with bound $K$, exponent $\alpha$ and family of projections $\Pi^u:=\{\Pi^u(\omega_p): \omega_p\in \mathbb{R}\times \widetilde{\Omega}\}$, where $\mathbb{P}(\widetilde{\Omega})=1$ and $\widetilde{\Omega}$ is $\theta$-invariant.
	Then
	for each $\omega_p\in \mathbb{R}\times \widetilde{\Omega}$ the sequence of linear operators 
	$\{\varphi_n(\omega_p):=\varphi(1,\Theta_n\omega_p)\, ; \, n\in \mathbb{Z}\}$ 
	admits an exponential dichotomy with bound 
	$K(\omega_p)$ and 
	exponent $\alpha(\omega_p)$.
\end{theorem}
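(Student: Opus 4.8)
The plan is to show that the sampled sequence inherits its dichotomy \emph{verbatim} from the continuous one: the discrete co-cycle generated by $\{\varphi_n(\omega_p)\}$ is nothing but the continuous co-cycle restricted to integer times, so one may take as discrete projections the restriction of $\Pi^u$ (equivalently $\Pi^s$) to the integer-time fibers $\Theta_n\omega_p$, $n\in\mathbb{Z}$, and keep the same bound $K(\omega_p)$ and exponent $\alpha(\omega_p)$.

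First I would record the identification of the two co-cycles. Iterating the co-cycle identity $\varphi(t+s,\omega_p)=\varphi(t,\Theta_s\omega_p)\circ\varphi(s,\omega_p)$ with $t=1$ gives, by induction on $n$,
\begin{equation*}
\varphi(n,\omega_p)=\varphi(1,\Theta_{n-1}\omega_p)\circ\cdots\circ\varphi(1,\omega_p)=\varphi_{n-1}(\omega_p)\circ\cdots\circ\varphi_0(\omega_p),
\end{equation*}
so the discrete co-cycle generated by the sequence $\{\varphi_n(\omega_p)\}_{n\in\mathbb{Z}}$ coincides with $n\mapsto\varphi(n,\omega_p)$, the restriction of the continuous co-cycle to $n\in\mathbb{Z}$.

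Next I would set the candidate discrete dichotomy data to be $\widetilde{\Pi}^s(\Theta_n\omega_p):=\Pi^s(\Theta_n\omega_p)$ and $\widetilde{\Pi}^u:=Id_X-\widetilde{\Pi}^s$, that is, evaluate the continuous projection family only at the points $\Theta_n\omega_p$, $n\in\mathbb{Z}$. The four defining conditions of a discrete exponential dichotomy are then obtained by specializing the corresponding continuous conditions to $t=n\in\mathbb{Z}$: strong measurability of $\omega\mapsto\Pi^s(\tau,\omega)$ restricts directly; the intertwining relation $\Pi^s(\Theta_t\omega_p)\varphi(t,\omega_p)=\varphi(t,\omega_p)\Pi^s(\omega_p)$ holds in particular at $t=n$; the isomorphism property of $\varphi(t,\omega_p)$ on the unstable range holds at $t=n$; and the exponential estimates, evaluated at $t=n\geq 0$ and $t=n\leq 0$, read exactly $\|\varphi(n,\omega_p)\Pi^s(\omega_p)\|\leq K(\omega_p)e^{-\alpha(\omega_p)n}$ and $\|\varphi(n,\omega_p)\Pi^u(\omega_p)\|\leq K(\omega_p)e^{\alpha(\omega_p)n}$. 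Finally, since $K$ and $\alpha$ are $\Theta$-invariant under the full flow, they are in particular invariant under the time-one map $\Theta_1$, hence constant along the discrete orbit $\{\Theta_n\omega_p\}$, as the discrete definition requires.

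The argument involves no estimate beyond those already assumed; the only point demanding care is bookkeeping — checking that the \emph{directions} of the intertwining and isomorphism conditions in the discrete definition are matched by their continuous counterparts at integer times, and that restricting a strongly measurable, $\Theta$-invariant family to integer-time fibers preserves both properties. All of this is immediate once the identification $\varphi(n,\omega_p)=\varphi_{n-1}(\omega_p)\circ\cdots\circ\varphi_0(\omega_p)$ is in hand, so I expect the proof to be short.
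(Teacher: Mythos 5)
Your proposal is correct and follows essentially the same route as the paper: define the discrete projections $P_n(\omega_p):=\Pi^s(\Theta_n\omega_p)$, use the co-cycle identity to identify $\varphi(n,\omega_p)$ with the product $\varphi_{n-1}(\omega_p)\circ\cdots\circ\varphi_0(\omega_p)$, and read off the intertwining, isomorphism and exponential estimates by restricting the continuous conditions to integer times, invoking $\Theta$-invariance of $K$ and $\alpha$ at the end. Nothing is missing.
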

\begin{proof}
	Let $\Pi^s$ be a family of projections associated with the
	exponential dichotomy.
	Define, for each $n\in \mathbb{Z}$ and 
	$\omega_p$, the projector $P_n(\omega_p):=\Pi^s(\Theta_n\omega_p)$. Then
	%First we write for $m\geq n$, $T_{m, n}(\omega_p,t)=S_{\omega_p}(t+ml,t+nl)$. 
	%	Define $Q_m(t,\omega_p)= \Pi^u(\Theta_{t+ml}(\omega_p))$
	\begin{eqnarray*}
		P_{n+1}(\omega_p)\varphi_n(\omega_p)&=&
		\Pi^s(\Theta_{n+1}(\omega_p))\varphi(1,\Theta_n(\omega_p))\\
		&=&\varphi(1,\Theta_n(\omega_p))\Pi^s(\Theta_{n}(\omega_p))\\
		&=&\varphi_n(\omega_p)P_{n}(\omega_p),
	\end{eqnarray*}
	and the first property is proved.
	Note that, if $Q=Id_X-P$, we have that
	\begin{equation*}
	\varphi_n(\omega_p)|_{R(Q_n(\omega_p))  )}
	=\varphi(1,\Theta_n(\omega_p))|_{R(\Pi^u(\Theta_{n}(\omega_p))  )}
	\end{equation*}
	is an isomorphism onto $R(Q_{n+1}(\omega_p)) $.
	\par Finally, for $n\geq m$ we see that
	\begin{equation*}
	\|\varphi_{n,m}(\omega_p)P_m(\omega_p)\|_{\mathcal{L}(X)}\leq K(\Theta_m\omega_p)
	e^{-\alpha(\omega_p)(n-m)},
	\end{equation*}
	and for $n<m$
	\begin{equation*}
	\|\varphi_{n,m}(\omega_p)Q_m(\omega_p)\|_{\mathcal{L}(X)}\leq K(\Theta_m\omega_p)
	e^{\alpha(\omega_p)(n-m)},
	\end{equation*}
	where $\varphi_{n,m}(\omega_p)$ is the inverse of $\varphi_{m,n}(\omega_p)$ over 
	$R(Q_m(\omega_p))$, and the proof follows by the $\Theta$-invariance property of $K$.
\end{proof}

\par As a corollary of Theorem \ref{th-continousED-implies-discreteED} and 
Corollary \ref{cor-uniqueness-projection-discrete} we obtain uniqueness of projectors for the continuous case.

\begin{corollary}\label{cor-uniqueness-projection-continuous}
	If $(\varphi,\Theta)$ admits an exponential dichotomy, then
	the family of projections are uniquely determined.
\end{corollary}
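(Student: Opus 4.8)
The plan is to deduce the continuous uniqueness from the discrete one (Corollary~\ref{cor-uniqueness-projection-discrete}) by passing through the discretization furnished by Theorem~\ref{th-continousED-implies-discreteED}. First I would suppose, toward this, that $(\varphi,\Theta)$ admits two exponential dichotomies with families of projections $\Pi^s_{(1)}$ and $\Pi^s_{(2)}$, defined respectively on $\theta$-invariant full-measure sets $\tilde\Omega_1,\tilde\Omega_2$ and with bounds/exponents $K_i,\alpha_i$. Setting $\tilde\Omega:=\tilde\Omega_1\cap\tilde\Omega_2$ (again $\theta$-invariant with $\mathbb{P}(\tilde\Omega)=1$), it suffices to prove $\Pi^s_{(1)}(\omega_p)=\Pi^s_{(2)}(\omega_p)$ for every $\omega_p\in\mathbb{R}\times\tilde\Omega$.

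Then I would fix $\omega_p\in\mathbb{R}\times\tilde\Omega$ and look at the discretized sequence $\{\varphi_n(\omega_p):=\varphi(1,\Theta_n\omega_p)\,;\,n\in\mathbb{Z}\}$. By Theorem~\ref{th-continousED-implies-discreteED}, each of the two continuous dichotomies induces a discrete exponential dichotomy of this one fixed sequence, the associated discrete projections being $P^{(i)}_n(\omega_p):=\Pi^s_{(i)}(\Theta_n\omega_p)$ for $i=1,2$. Thus I obtain two discrete dichotomies of the very same discrete system, differing only in their projection families.

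Next I would invoke Corollary~\ref{cor-uniqueness-projection-discrete}. Its proof proceeds fiber-by-fiber: for a fixed base point it uses the admissibility result Theorem~\ref{th-admissibility-for-pertubed-process} with $B=0$ to produce a unique bounded solution, whose value at the origin pins down the projection through the Green function $G_{\omega_p}(0,0)$. Hence it applies directly to the single orbit generated by $\omega_p$ and yields $P^{(1)}_n(\omega_p)=P^{(2)}_n(\omega_p)$ for all $n\in\mathbb{Z}$. Evaluating at $n=0$, and using that $\Theta_0\omega_p=\omega_p$ because $\Theta_0(\tau,\omega)=(\tau,\theta_0\omega)=(\tau,\omega)$, gives $\Pi^s_{(1)}(\omega_p)=\Pi^s_{(2)}(\omega_p)$. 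Since $\omega_p\in\mathbb{R}\times\tilde\Omega$ is arbitrary, the continuous family of projections is uniquely determined.

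The step I expect to require the most care is not an estimate but a bookkeeping matter: I must make sure the discrete uniqueness of Corollary~\ref{cor-uniqueness-projection-discrete} is genuinely a statement along a single fiber, so that it can be applied to the frozen sequence $\{\varphi_n(\omega_p)\}$ obtained by discretizing along the $\Theta$-orbit of one $\omega_p$, rather than demanding a globally defined discrete random co-cycle. Granting this, the chain Theorem~\ref{th-continousED-implies-discreteED} $\Rightarrow$ Corollary~\ref{cor-uniqueness-projection-discrete} $\Rightarrow$ evaluation at $n=0$ closes the proof.
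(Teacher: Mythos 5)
Your proposal is correct and follows essentially the same route as the paper, which derives Corollary~\ref{cor-uniqueness-projection-continuous} precisely by combining Theorem~\ref{th-continousED-implies-discreteED} with the discrete uniqueness statement of Corollary~\ref{cor-uniqueness-projection-discrete} and evaluating the induced projections $P_n(\omega_p)=\Pi^s(\Theta_n\omega_p)$ at $n=0$. Your ``bookkeeping'' concern is also resolved exactly as you suspect: the proof of Corollary~\ref{cor-uniqueness-projection-discrete} is carried out fiber-by-fiber for a fixed $\omega_p$ via Theorem~\ref{th-admissibility-for-pertubed-process} with $B=0$, so it applies to the frozen sequence $\{\varphi_n(\omega_p)\}_{n\in\mathbb{Z}}$ along a single $\Theta$-orbit.
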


\par Now, we provide conditions to prove a kind of converse result of Theorem
\ref{th-continousED-implies-discreteED}. If the discretization admits an exponential dichotomy then the continuous co-cycle also possesses.

\begin{theorem}\label{th-discrete-dichotomy-implies-continuous-dichotomy}
	Let $(\varphi,\Theta)_{(X,\mathbb{R}\times\Omega)}$ be
	a co-cycle and for each $\omega_p$ consider the 
	associated sequence of operators 
	\begin{equation*}
	\{\varphi_n(\omega_p):=\varphi(1,\Theta_n\omega_p)\}_{n\in \mathbb{Z}}.
	\end{equation*}
	Suppose that there is a full measure set 
	$\tilde{\Omega}$ such that for each $\omega_p\in \mathbb{R}\times \tilde{\Omega}$ 
	\begin{itemize}
		\item we have that
		\begin{equation*}
		L(\omega_p):=\sup_{0\leq t\leq 1} \|\varphi(t,\omega_p)\|_{\mathcal{L}(X)} < +\infty,
		\end{equation*}
		satisfies $L(\Theta_t\omega_p)\leq L(\omega_p)$, for all $t\in \mathbb{R}$.
		\item there exists $\Theta$-invariant maps
		$K,\alpha$ such that the sequence 
		$\{\varphi_n(\omega_p)\}_{n\in \mathbb{Z}}$ admits an
		exponential dichotomy with bound $K(\omega_p)$,
		exponent $\alpha(\omega_p)$, and family of projections
		$\{P_n(\omega_p): n\in \mathbb{Z}\}$ such that
		for each $(n,p)\in \mathbb{Z}\times\mathbb{R}$ 
		the map
		$P_n(p,\cdot):\tilde{\Omega}\rightarrow \mathcal{L}(X)$ is strongly measurable.
	\end{itemize}
	Then $(\varphi,\Theta)$ admits an exponential dichotomy with exponent
	$\alpha$, and bound
	\begin{equation*}
	\hat{K}(\omega_p)=K(\omega_p)\sup_{0\leq t\leq  1}
	\{\|\varphi(t,\omega_p)\|_{\mathcal{L}(X)}e^{\alpha(\omega_p)t}\,\}.
	\end{equation*}
\end{theorem}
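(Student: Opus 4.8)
The plan is to build the continuous family of projections directly from the discrete one $\{P_n(\omega_p)\}_{n\in\mathbb{Z}}$ and then verify each item in the definition of exponential dichotomy, extracting the exponential estimates from the discrete ones by an interpolation over the intervals $[n,n+1]$. Fix $\omega_p\in\mathbb{R}\times\tilde{\Omega}$ and write $Q_n(\omega_p):=Id_X-P_n(\omega_p)$. A preliminary fact I would record first is an injectivity property: since $\varphi(1,\Theta_n\omega_p)|_{R(Q_n(\omega_p))}$ is an isomorphism onto $R(Q_{n+1}(\omega_p))$ and, by the co-cycle property, $\varphi(1,\Theta_n\omega_p)=\varphi(1-s,\Theta_{n+s}\omega_p)\,\varphi(s,\Theta_n\omega_p)$, the partial map $\varphi(s,\Theta_n\omega_p)|_{R(Q_n(\omega_p))}$ is injective for every $s\in[0,1]$.

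For $t\in\mathbb{R}$ I write $n=\lfloor t\rfloor$ and $s=t-n\in[0,1)$ and set
\[
U(t):=\varphi(s,\Theta_n\omega_p)\,R(Q_n(\omega_p)),\qquad
S(t):=\{x\in X:\ \varphi(1-s,\Theta_t\omega_p)x\in R(P_{n+1}(\omega_p))\},
\]
and I let $\Pi^s(\Theta_t\omega_p)$ be the projection onto $S(t)$ along $U(t)$, with $\Pi^u:=Id_X-\Pi^s$. The heart of the argument, and the step I expect to be the main obstacle, is to prove $X=S(t)\oplus U(t)$ for every $t$ (so that these projections are well defined and, both summands being closed, automatically bounded, i.e. lie in $\mathcal{L}(X)$). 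For the intersection, if $x=\varphi(s,\Theta_n\omega_p)q\in U(t)$ with $q\in R(Q_n)$ also lies in $S(t)$, then $\varphi(1-s,\Theta_t\omega_p)x=\varphi(1,\Theta_n\omega_p)q\in R(P_{n+1})\cap R(Q_{n+1})=\{0\}$, and the injectivity above forces $q=0$. For the sum, given $x\in X$ I decompose $y:=\varphi(1-s,\Theta_t\omega_p)x=P_{n+1}y+Q_{n+1}y$, use that $\varphi(1,\Theta_n\omega_p)$ maps $R(Q_n)$ onto $R(Q_{n+1})$ to write $Q_{n+1}y=\varphi(1-s,\Theta_t\omega_p)x_u$ with a (unique) $x_u\in U(t)$, and then check $x-x_u\in S(t)$. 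At integer times one verifies $S(n)=R(P_n)$ and $U(n)=R(Q_n)$, so the construction recovers $P_n$ and is consistent with Theorem~\ref{th-continousED-implies-discreteED}.

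With the decomposition in hand the remaining axioms follow routinely. The inclusions $\varphi(t,\omega_p)S(0)\subseteq S(t)$ and $\varphi(t,\omega_p)U(0)=U(t)$ (using the discrete invariance $P_n\varphi(n,\omega_p)=\varphi(n,\omega_p)P_0$ and the discrete isomorphism property) yield both the commuting relation $\Pi^s(\Theta_t\omega_p)\varphi(t,\omega_p)=\varphi(t,\omega_p)\Pi^s(\omega_p)$ and, together with the injectivity of $\varphi(s,\cdot)$ on the unstable fibres, the fact that $\varphi(t,\omega_p)$ restricts to an isomorphism $R(\Pi^u(\omega_p))\to R(\Pi^u(\Theta_t\omega_p))$.

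The exponential bounds come from interpolation: for $t\ge0$ and $x\in X$, the co-cycle property and the discrete stable estimate give
\[
\|\varphi(t,\omega_p)\Pi^s(\omega_p)x\|_X=\|\varphi(s,\Theta_n\omega_p)\varphi(n,\omega_p)P_0(\omega_p)x\|_X\le \|\varphi(s,\Theta_n\omega_p)\|_{\mathcal{L}(X)}\,K(\omega_p)e^{-\alpha(\omega_p)n}\|x\|_X,
\]
and substituting $e^{-\alpha(\omega_p)n}=e^{\alpha(\omega_p)s}e^{-\alpha(\omega_p)t}$ produces the factor $\|\varphi(s,\Theta_n\omega_p)\|_{\mathcal{L}(X)}\,e^{\alpha(\omega_p)s}$, which the hypothesis on $L$ controls by $\sup_{0\le r\le1}\|\varphi(r,\omega_p)\|_{\mathcal{L}(X)}e^{\alpha(\omega_p)r}$, i.e. by $\hat{K}(\omega_p)/K(\omega_p)$; the unstable estimate for $t\le0$ is symmetric. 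The hypothesis $L(\Theta_t\omega_p)\le L(\omega_p)$ (which, holding for all $t$, forces $L$ to be $\Theta$-invariant) is precisely what guarantees that $\hat{K}$ is $\Theta$-invariant, as the definition requires. Finally, strong measurability of $\omega\mapsto\Pi^s(\Theta_t\omega_p)$ follows by writing
\[
\Pi^u(\Theta_t\omega_p)=\varphi(s,\Theta_n\omega_p)\,\big(\varphi(1,\Theta_n\omega_p)|_{R(Q_n)}\big)^{-1}Q_{n+1}(\omega_p)\,\varphi(1-s,\Theta_t\omega_p)
\]
and invoking the measurability of the co-cycle together with the assumed strong measurability of the maps $P_n(p,\cdot)$, exactly as in the discrete case.
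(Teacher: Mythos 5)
Your argument is correct in its essentials, but it takes a genuinely different route from the paper's. The paper exploits the fact that the hypothesis already supplies a discrete dichotomy for the discretization based at \emph{every} real time $p$: it simply defines $\Pi^s(\omega_\tau):=P_0(\omega_\tau)$, checks consistency along integer shifts via the uniqueness of discrete projections (Corollary \ref{cor-uniqueness-projection-discrete}), and then derives the continuous estimates, the characterizations of $R(\Pi^s)$ and $R(\Pi^u)$ by bounded forward orbits and backwards bounded solutions, and the invariance and isomorphism properties. You instead use the discrete data only at the integer translates of a single base point and reconstruct the intermediate projections by hand, proving $X=S(t)\oplus U(t)$ directly. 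This buys a more self-contained argument that uses less of the hypothesis, but it incurs obligations the paper's definition avoids: (i) closedness of $U(t)$ is not automatic for the continuous image of a closed subspace of a Banach space --- it does hold here because $\varphi(1-s,\Theta_t\omega_p)\circ\varphi(s,\Theta_n\omega_p)|_{R(Q_n)}$ is the discrete isomorphism, so $\varphi(s,\Theta_n\omega_p)|_{R(Q_n)}$ is bounded below, but this should be stated before invoking boundedness of the projection; (ii) the family $\Pi^s$ must be well defined on all of $\mathbb{R}\times\tilde{\Omega}$, i.e., independent of which point of the orbit is used as the integer anchor --- this does follow from your characterizations of $S(t)$ and $U(t)$ (equivalently, from Corollary \ref{cor-uniqueness-projection-discrete} applied at the shifted base point, which is exactly how the paper's Claim~1 proceeds), but you do not address it; and (iii) strong measurability of the constructed intermediate projections requires justifying measurability of the inverse $\bigl(\varphi(1,\Theta_n\omega_p)|_{R(Q_n)}\bigr)^{-1}$ in your formula, whereas the paper obtains measurability for free from the assumption on $P_0(p,\cdot)$. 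Your interpolation estimate and the resulting bound $\hat{K}$ coincide with the paper's Claims~2 and~3, and the verification at integer times recovers Theorem \ref{th-continousED-implies-discreteED} as you note.
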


\begin{proof}
	Let $\{P_n(\omega_p); n\in \mathbb{Z} \}$ be the family of projectors associated with the 
	exponential dichotomy of $\{\varphi_n(\omega_p)\}_{n\in \mathbb{Z}}$.
	Define $\Pi^s:\mathbb{R}\times \tilde{\Omega}\to \mathcal{L}(X)$ by
	\begin{equation*}
	\Pi^s(\omega_p):=P_0(\omega_p).
	\end{equation*}
	Thus for each $p\in \mathbb{R}$ fixed 
	$\Pi^s(p,\cdot):\tilde{\Omega}\to \mathcal{L}(X)$ is strongly measurable.
	\par \textbf{Claim 1:} For each $k\in \mathbb{Z}$ fixed,
	we have that
	$P_{k}(\omega_p)=\Pi^s(\Theta_k\omega_p)$.
	\par Indeed, for each 
	$k\in \mathbb{Z}$ fixed the sequence
	$\{\varphi_n(\Theta_k\omega_p)\}_{n\in \mathbb{Z}}$ admits 
	an exponential dichotomy with projections
	$\{P_n(\Theta_k\omega_p); n\in \mathbb{Z} \}$. 
	Note that,
	\begin{equation*}
	\varphi_n(\Theta_k\omega_p)=\varphi(1,\Theta_n(\Theta_k\omega_p))=
	\varphi_{n+k}(\omega_p).
	\end{equation*}
	Then, from Lemma \ref{cor-uniqueness-projection-discrete} we have that
	$P_n(\Theta_k\omega_p)=P_{n+k}(\omega_p)$ for all 
	$n,k\in \mathbb{Z}$. In particular,
	$P_k(\omega_p)=\Pi^s(\Theta_{k}\omega_p)$.
	\par Next, we prove that this projector operator is the candidate to obtain the exponential dichotomy.
	%	\par \textbf{Claim 2:} For all $n\geq m$
	%	\begin{equation*}
	%	\Pi^s(\Theta_n\omega_p)\varphi_{n,m}(\omega_p)=
	%	\varphi_{n,m}(\omega_p)\Pi^s(\Theta_m\omega_p).
	%	\end{equation*}
	%	This follows directly by the invariance of the projection and the definition
	%	of the evolution process $\{\varphi_{n,m}: n\geq m\}$
	\par \textbf{Claim 2:} For all $t\geq 0$ and $\omega_p\in \mathbb{Z}\times\tilde{\Omega}$, 
	we have that
	\begin{equation*}
	\|\varphi(t,\omega_p)\Pi^s(\omega_p)\|_{\mathcal{L}(X)} \leq \hat{K}(\omega_p) 
	e^{-\alpha(\omega_p)t},
	\end{equation*}	
	where $\hat{K}(\omega_p)=K(\omega_p)\sup_{0\leq t\leq 1}
	\{e^{\alpha(\omega_p)t} \|\varphi(t,\omega_p)\|_{\mathcal{L}(X)}\}$.
	\par Indeed, choose $n\in \mathbb{N}$, such that 
	$n\leq t<n+1$, then
	we write 
	\begin{equation*}
	\varphi(t,\omega_p)=\varphi(t-n,\Theta_n\omega_p)\varphi(n,\omega_p).
	\end{equation*}
	Therefore
	\begin{eqnarray*}
		\|\varphi(t,\omega_p)\Pi^s(\omega_p)\|_{\mathcal{L}(X)}
		&\leq&
		K(\omega_p) e^{-\alpha(\omega_p) n} \|\varphi(t-n,\Theta_n\omega_p)\|_{\mathcal{L}(X)}\\
		&\leq& \hat{K}(\omega_p) e^{-\alpha(\omega_p) t}.
	\end{eqnarray*}
	
	%	\par Now we define the inverse over 
	%	$R(\Pi^u(\omega_p))$.
	
	\par\textbf{Claim 3:} Let $x\in R(\Pi^u(\omega_p))$, $t<0$ and choose 
	$n\leq 0$
	such that 
	$n\leq t<n+1$. Define the linear operator
	\begin{equation*}
	\varphi(t,\omega_p)x:=\varphi(t-n,\Theta_n\omega_p)\varphi(n,\omega_p)x,
	\end{equation*}
	where $\varphi(n,\omega_p)$ is the inverse of
	$\varphi(-n,\Theta_n\omega_p)|_{R(\Pi^u(\Theta_n\omega_p))}$.
	Then for all $t\leq 0$
	\begin{equation*}
	\|\varphi(t,\omega_p)\Pi^u(\omega_p)\|_{\mathcal{L}(X)}
	\leq \hat{K}(\omega_p) e^{\alpha(\omega_p) t}.
	\end{equation*}
	\par The proof of Claim 3 follows by a similar argument used on the proof of Claim 2.
	\par \textbf{Claim 4:} The range of $\Pi^s(\omega_p)$ is characterized as follows
	\begin{equation*}
	R(\Pi^s(\omega_p))=\{z\in X\, ; [0,+\infty) \ni t \mapsto \varphi(t,\omega_p)z 
	\hbox{ is bounded } \}.
	\end{equation*}
	\par Indeed, if $x\in R(\Pi^s(\omega_p))$ from Claim 2 we have that
	\begin{equation*}
	\|\varphi(t,\omega_p)x\|_X
	\leq \hat{K}(\omega_p) e^{-\alpha(\omega_p) t}\|x\|_X, \hbox{ for every } t\geq 0.
	\end{equation*}
	Thus, it follows that $[0,+\infty)\ni t\mapsto \varphi(t,\omega_p)x$ is bounded.
	Conversely, suppose that $x\notin R(\Pi^s(\omega_p))$ and define
	$v=\varphi(n,\omega_p)\Pi^u(\omega_p)x,$, 
	hence
	\begin{equation*}
	\|\varphi(-n,\Theta_n\omega_p)\Pi^u(\Theta_n\omega_p)v\|_X\leq 
	K(\Theta_n\omega_p)e^{-\alpha(\omega_p)n}\|v\|_X, 
	\end{equation*}
	for $n\geq 0$.
	Thus we obtain
	\begin{equation*}
	\|\Pi^u(\omega_p)x\|_X\leq K(\omega_p) e^{\alpha(\omega_p)n} %e^{\nu|n|-\alpha(\omega_p)n}
	\|\varphi(n,\omega_p)\Pi^u(\omega_p)x\|_X, \hbox{ for } n\geq 0.
	\end{equation*}
	%	where we use that 
	%	$K(\Theta_n\omega_p)\leq D(\omega_p)e^{-\alpha(\omega_p)n} $ %$e^{\nu|n|-\alpha(\omega_p)n}$,
	%	for a $\Theta$-invariant map $D$.
	Since $\Pi^u(\omega_p)x\neq0$ and 
	%	$\nu<\alpha(\omega_p)$ 
	we obtain that
	$[0,+\infty) \ni n \mapsto \varphi(n,\omega_p)\Pi^u(\omega_p)x$ is unbounded,
	then the mapping
	$[0,+\infty) \ni t \mapsto \varphi(n,\omega_p)x$ is unbounded and complete the proof of Claim 4.
	\par \textbf{Claim 5:} The range of $\Pi^u(\omega_p)$ is characterized as follows: $z\in \Pi^u(\omega_p)$ if and only if there exists a backwards bounded solution $\xi^*$ for the evolution process $\Phi_{\omega_p}=\{\varphi_{t,s}(\omega_p): t\geq s\}$, see Remark \ref{remark-associated-processes}, such that $\xi^*(0)=z$.
	\par In fact, let $z\in R(\Pi^u(\omega_p))$, and $t<0$, 
	we define 
	\begin{equation*}
	\xi(t)=\varphi(t-n,\Theta_n\omega_p)\varphi(n,\omega_p)z,
	\end{equation*}
	where $n\leq 0$ is such that $n\leq t< n+1$.
	Then $\xi$ is a backwards bounded solution for
	the evolution process 
	$\{\varphi_{t,s}(\omega_p):t\geq s\}$ through $z$.
	In fact, for $t\geq s$
	\begin{eqnarray*}
		\varphi_{t,s}(\omega_p)\xi(s)&=&
		\varphi(t-s,\Theta_s\omega_p)\varphi(s-n,\Theta_n\omega_p)\varphi(n,\omega_p)z\\
		&=&\varphi(t-n,\Theta_n\omega_p)z=\xi(t),
	\end{eqnarray*}
	which shows that $\xi$ is a backwards solution.
	From Claim 3 we have that $\xi$ is bounded and 
	$\xi(0)=z$.
	Conversely, choose $x\in X$ 
	such that $x\notin R(\Pi^u(\omega_p))$ and
	suppose that there exists a backwards solution of
	$\{\varphi_{t,s}(\omega_p): t\geq s\}$ through $x$ on $t=0$.
	Then, for $n\leq 0$,
	\begin{eqnarray*}
		\|\Pi^s(\omega_p)x\|_X&=&
		\|\Pi^s(\omega_p)\varphi(-n,\Theta_n\omega_p)\xi(n)\|_X\\
		&\leq&	\|\varphi(-n,\Theta_n\omega_p)\Pi^s(\Theta_n\omega_p)\|_{\mathcal{L}(X)}
		\|\xi(n)\|_X\\
		&\leq& K(\omega_p)e^{\alpha(\omega_p)n}\|\xi(n)\|_X.
		% e^{-\nu n+\alpha(\omega_p)n}\|\xi(n)\|_X.
	\end{eqnarray*}
	Since $\Pi^s(\omega_p)x \neq 0$ we see that % and $\nu <\alpha(\omega_p)$,
	$\xi$ is unbounded, and the proof is complete.
	
	\par Now, it follows as in the discrete case, see Lemma \ref{lemma-key-to-prove-robstness-of-discrete}, that: 
	\par \textbf{Claim 6:} $R(\Pi^s(\cdot))$ is positively invariant and $R(\Pi^u(\omega_p)$ is invariant, i.e.,
	\begin{eqnarray*}
		\varphi(t,\omega_p)R(\Pi^s(\omega_p))&\subset&
		R(\Pi^s(\Theta_{t}\omega_p)), \hbox{ for all }t\geq 0, \hbox{ and }\\
		\varphi(t,\omega_p) R(\Pi^u(\omega_p))&=&R(\Pi^u(\Theta_t\omega_p)), 
		\hbox{ for all } t\geq 0.
	\end{eqnarray*}

	\par \textbf{Claim 7:} The linear operator
	$\varphi(t,\omega_p): R(\Pi^u(\omega_p))\rightarrow X$
	is injective.
	\par Indeed, let $z\in R(\Pi^u(\omega_p)$ such that $\varphi(t,\omega_p)z=0$.
	Choose $n\in \mathbb{N}$ such that $n\leq t\leq n+1$, then
	\begin{equation*}
	0=\varphi(n-t,\Theta_t\omega_p)\varphi(t,\omega_p)z=\varphi(n,\omega_p)z.
	\end{equation*}
	Now, Claim 7 follows by the fact that $\varphi(n,\omega_p)|_{R(\Pi^u(\omega_p))}$
	%=\varphi_{n-1}(\omega_p)\circ\cdots\circ\varphi_0(\omega_p)|_{R(\Pi^u(\omega_p))}$ 
	is injective for any
	integer $n\leq 0$.
	%	thus $z\in N(\varphi(n,\omega_p)|_{R(\Pi^u(\omega_p))})=\{0\}$.
	\par Then it follows directly from Claims 6 and 7 that
	$\varphi(t,\omega_p):R(\Pi^u(\omega_p))\rightarrow R(\Pi^u(\Theta_t\omega_p))$ is an isomorphism.
	\par Finally, from Claim 6, we obtain that 
	$\varphi(t,\omega_p)\Pi^s(\omega_p)=
	\Pi^s(\Theta_t\omega_p)\varphi(t,\omega_p)$ for all
	$t\geq 0$, and the proof of the theorem is complete.
	%	Indeed, let $x\in X$, then
	%	\begin{equation*}
	%	\varphi(t,\omega_p)x=\varphi(t,\omega_p)\Pi^s(\omega_p)x+\varphi(t,\omega_p)\Pi^u(\omega_p)x.
	%	\end{equation*}
	%	Thus, applying $\Pi^s(\Theta_t\omega_p)$ we obtain for the invariance of the stable and unstable sets that
	%	\begin{eqnarray*}
	%	\Pi^s(\Theta_t\omega_p)\varphi(t,\omega_p)x&=&\Pi^s(\Theta_t\omega_p)\varphi(t,\omega_p)\Pi^s(\omega_p)x+0\\
	%	&=&\varphi(t,\omega_p)\Pi^s(\omega_p)x.
	%	\end{eqnarray*}
	%	Since $x$ is arbitrary, the claim is proved, and the proof of the theorem is complete.
\end{proof}

\par Now, we state our robustness result for nonautonomous random dynamical systems with an exponential dichotomy.
\begin{theorem}\label{th-robustness-uniform-ED-co-cycle}
	Let $(\varphi,\Theta)$ be an co-cycle with an exponential dichotomy with bound 
	$K$ and exponent $\alpha$. Assume that there is a random variable
	$L:\mathbb{R}\times\Omega\rightarrow (0,+\infty)$ such that
	\begin{equation*}
	L(\omega_p):=\sup_{0\leq t\leq  1}\Big\{\|\varphi(t,\omega_p)\|_{\mathcal{L}(X)}\Big\}<+\infty,
	\end{equation*}
	that satisfies $L(\Theta_t\omega_p)\leq L(\omega_p)$, for all $t\in \mathbb{R}$.
	Then there exists a $\Theta$-invariant map $\delta:\mathbb{Z}\times \Omega\to \mathbb{R}$ with
	\begin{equation*}
	0< \delta(\omega_p) < 
	\frac{1-e^{-\alpha(\omega_p)}}{1+e^{-\alpha(\omega_p)}},
	\hbox{ for each }\omega_p\in \mathbb{R}\times\Omega,
	\end{equation*}
	such that every co-cycle $(\psi,\Theta)$ satisfying
	\begin{equation*}
	\sup_{0\leq t\leq  1}\Big\{\|\varphi(t,\omega_p)-\psi(t,\omega_p)\|_{\mathcal{L}(X)}\Big\}\leq \delta(\omega_p),
	\end{equation*}
	admits an exponential dichotomy with
	exponent $\tilde{\alpha}(\omega_p)$ and bound
	\begin{equation*}
	\hat{M}(\omega_p)=M(\omega_p)\sup_{0\leq t\leq  1}
	\Big\{\|\psi(t,\omega_p)\|_{\mathcal{L}(X)}e^{\tilde{\alpha}(\omega_p)t}\,\Big\},
	\end{equation*}
	where $M$ and $\tilde{\alpha}$ are the bound and exponent of the discretization of
	$(\psi,\Theta)$ given in Theorem 
	\ref{th-discrete-robstness-random-perturbations-of-autonomous-nonuniform-case}.
\end{theorem}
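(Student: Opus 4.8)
The plan is to prove the statement by \emph{discretization}, reducing the continuous problem to the discrete robustness result already established and then lifting the conclusion back to continuous time. Concretely, I would chain together Theorem \ref{th-continousED-implies-discreteED} (continuous dichotomy $\Rightarrow$ discrete dichotomy of the time-one map), Theorem \ref{th-discrete-robstness-random-perturbations-of-autonomous-nonuniform-case} (discrete robustness), and Theorem \ref{th-discrete-dichotomy-implies-continuous-dichotomy} (discrete dichotomy together with the $L$-bound $\Rightarrow$ continuous dichotomy). The whole argument is carried out fiberwise, for each fixed $\omega_p$ in the $\theta$-invariant full-measure set $\tilde\Omega$ on which $(\varphi,\Theta)$ admits its dichotomy.

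First I would apply Theorem \ref{th-continousED-implies-discreteED} to the unperturbed co-cycle: since $(\varphi,\Theta)$ admits a continuous exponential dichotomy with bound $K$ and exponent $\alpha$, the discretized sequence $\{\varphi_n(\omega_p):=\varphi(1,\Theta_n\omega_p)\}_{n\in\mathbb Z}$ admits a discrete exponential dichotomy with the same $\Theta$-invariant bound $K(\omega_p)$ and exponent $\alpha(\omega_p)$. Next I would transfer the closeness hypothesis to the discrete level. Evaluating $\sup_{0\le t\le 1}\|\varphi(t,\omega_p)-\psi(t,\omega_p)\|_{\mathcal L(X)}\le\delta(\omega_p)$ at the point $\Theta_n\omega_p$ and taking $t=1$, and using that $\delta$ is $\Theta$-invariant, gives $\|\varphi_n(\omega_p)-\psi_n(\omega_p)\|_{\mathcal L(X)}\le\delta(\omega_p)$ for every $n$. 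Choosing the continuous threshold small enough to absorb the factor $K$ appearing in the discrete hypothesis (i.e.\ taking it to be the discrete threshold divided by $K(\omega_p)$, which still respects $\delta(\omega_p)<(1-e^{-\alpha(\omega_p)})/(1+e^{-\alpha(\omega_p)})$ since $K\ge1$), the hypothesis \eqref{th-discrete-robstness-random-perturbations-of-autonomous-nonuniform-case-hypothesis1} of Theorem \ref{th-discrete-robstness-random-perturbations-of-autonomous-nonuniform-case} is met with the roles reversed: the discretization of $\varphi$ plays the unperturbed system carrying the dichotomy, and the discretization of $\psi$ is its perturbation. That theorem then yields a discrete exponential dichotomy for $\{\psi_n(\omega_p)\}$ with bound $M(\omega_p)$ and exponent $\tilde\alpha(\omega_p)$, and its proof (via the contraction/fixed-point characterization of the Green function) also delivers the strong measurability of the associated projections $P_n^\psi(p,\cdot)$.

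Finally I would invoke Theorem \ref{th-discrete-dichotomy-implies-continuous-dichotomy} for $(\psi,\Theta)$, with $M$ and $\tilde\alpha$ now playing the roles of the bound and exponent there. The finiteness $L_\psi(\omega_p):=\sup_{0\le t\le1}\|\psi(t,\omega_p)\|_{\mathcal L(X)}<+\infty$ is immediate from $\|\psi(t,\omega_p)\|\le\|\varphi(t,\omega_p)\|+\delta(\omega_p)\le L(\omega_p)+\delta(\omega_p)$, and the measurability of the projections was just obtained; feeding $M$ and $\tilde\alpha$ into the bound formula of Theorem \ref{th-discrete-dichotomy-implies-continuous-dichotomy} reproduces exactly $\hat M(\omega_p)=M(\omega_p)\sup_{0\le t\le1}\{\|\psi(t,\omega_p)\|_{\mathcal L(X)}e^{\tilde\alpha(\omega_p)t}\}$, giving the claimed continuous exponential dichotomy with exponent $\tilde\alpha$.

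The step I expect to be the main obstacle is the remaining hypothesis of Theorem \ref{th-discrete-dichotomy-implies-continuous-dichotomy}, namely the sub-invariance $L_\psi(\Theta_t\omega_p)\le L_\psi(\omega_p)$ (equivalently, since it is required for all real $t$, the $\Theta$-invariance of the time-one supremum). This is genuine information that is \emph{not} delivered by the crude bound $\|\psi\|\le\|\varphi\|+\delta$, because $\Theta$-invariance of $L$ and of $\delta$ only yields $L_\psi(\Theta_t\omega_p)\le L(\omega_p)+\delta(\omega_p)$ rather than a bound by $L_\psi(\omega_p)$ itself. I would resolve this by replacing $L_\psi$ with its $\Theta$-invariant majorant $\bar L_\psi(\omega_p):=\sup_{s\in\mathbb R}\sup_{0\le t\le1}\|\psi(t,\Theta_s\omega_p)\|_{\mathcal L(X)}$, which is automatically $\Theta$-invariant and finite since $\bar L_\psi(\omega_p)\le L(\omega_p)+\delta(\omega_p)$ by the sub-invariance of $L$ and invariance of $\delta$; using $\bar L_\psi$ in place of $L_\psi$ satisfies the hypothesis at the harmless cost of possibly enlarging the bound, the dichotomy still having exponent $\tilde\alpha$ and a bound of the stated form $\hat M$. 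The rest of the bookkeeping (matching $\delta$ to the discrete threshold and confirming measurability) is routine.
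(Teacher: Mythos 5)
Your proposal is correct and follows essentially the same route as the paper's proof: discretize via Theorem \ref{th-continousED-implies-discreteED}, feed the time-one maps into the discrete robustness Theorem \ref{th-discrete-robstness-random-perturbations-of-autonomous-nonuniform-case}, and lift back to continuous time with Theorem \ref{th-discrete-dichotomy-implies-continuous-dichotomy}. You are in fact more careful than the paper on two bookkeeping points---absorbing the factor $K(\omega_p)$ from hypothesis \eqref{th-discrete-robstness-random-perturbations-of-autonomous-nonuniform-case-hypothesis1} into the choice of $\delta$, and securing the sub-invariance $L_\psi(\Theta_t\omega_p)\le L_\psi(\omega_p)$ by passing to the $\Theta$-invariant majorant---whereas the paper's proof only checks finiteness, $\sup_{0\le t\le 1}\|\psi(t,\omega_p)\|_{\mathcal{L}(X)}\le L(\omega_p)+\delta(\omega_p)$, and silently takes the rest for granted.
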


\begin{proof}
	First, we consider the discretization of the co-cycle 
	$(\varphi,\Theta)$, i.e., for each $\omega_p$ we consider 
	the family of linear operators 
	$\{\varphi_n(\omega_p):=\varphi(1,\omega_p): n\in \mathbb{Z} \}$.
	From
	Theorem \ref{th-continousED-implies-discreteED}, we have that
	$\{\varphi_n(\omega_p) \, : \, n\in \mathbb{Z}\}$ admits a exponential dichotomy with bound 
	$K(\omega_p)$ and exponent $\alpha(\omega_p)$.
	By Theorem \ref{th-discrete-robstness-random-perturbations-of-autonomous-nonuniform-case}, there exists a
	$\Theta$-invariant map 
	$\delta$ such that
	if
	$\{\psi_n(\omega_p)\}_{n\in \mathbb{Z}}$ is a sequence of bounded linear operators which satisfies
	\begin{equation*}
	\sup_{n\in \mathbb{Z}}\|\varphi_n(\omega_p)-\psi_n(\omega_p)\|_{\mathcal{L}(X)}\leq \delta(\Theta_n\omega_p)=\delta(\omega_p),
	\end{equation*}
	$\{\psi_n(\omega_p)\}_{n\in \mathbb{Z}}$ admits an exponential dichotomy with 
	bound $M(\omega_p)$
	and exponent $\tilde{\alpha}(\omega_p)$ (see Theorem \ref{th-discrete-robstness-random-perturbations-of-autonomous-nonuniform-case}).
	Now, in order to use Theorem \ref{th-discrete-dichotomy-implies-continuous-dichotomy}
	to guarantee that $(\psi,\Theta)$ admits an exponential dichotomy
	it remains only to see that
	%	 $\sup_{0\leq t\leq  1}\|\psi(t,\omega_p)\|_{\mathcal{L}(X)}<+\infty.$
	%	In fact, note that
	\begin{equation*}
	\sup_{0\leq t\leq  1}\|\psi(t,\omega_p)\|_{\mathcal{L}(X)}\leq 
	\delta(\omega_p)+ L(\omega_p)<+\infty.
	\end{equation*}
	Therefore, the hypotheses of Theorem 
	\ref{th-discrete-dichotomy-implies-continuous-dichotomy}
	are satisfied, and the proof is complete.
\end{proof}

\begin{remark}
	Note that for each nonautonomous evolution process $(\varphi,\Theta)$ with 
	a uniform exponential dichotomy there exists $\delta$ in the previous theorem that depends only
	on the exponent of exponential dichotomy. 
	When applying Theorem \ref{th-discrete-robstness-random-perturbations-of-autonomous-nonuniform-case} we obtain explicit functions for the bound and exponent of the perturbations, which is an improvement of the result of robustness in the case of $\Omega$ a Hausdorff compact topological space on \cite{Chow-Leiva-existence-roughness}.
\end{remark}

\par To end this subsection we extend the result on the continuous dependence of projections for discrete co-cycle, Theorem \ref{th-continuity-projection-discrete}, to continuous co-cycle.

\begin{theorem}\label{th-continuity-projection-continuous}
	Suppose that  $(\varphi,\Theta)$ and $(\psi,\Theta)$ are nonautonomous random dynamical systems and that they
	admit an exponential dichotomy with projections 
	$\Pi_\varphi^s$ and $\Pi_\psi^s$, and
	exponents $\alpha_\varphi$
	and 
	$\alpha_\psi$, respectively.
	If 
	\begin{equation*}
	\sup_{t\in \mathbb{R}}
	\Big\{K(\omega_p)\|\varphi(t,\omega_p)-\psi(t,\omega_p)\|_{{\mathcal L}(X)}\Big\}\leq
	\epsilon , %\hbox{ for all } n\in {\mathbb Z},
	\end{equation*}
	then
	$$
	\sup_{t\in \mathbb{R}}  \|\Pi_\varphi^s(\Theta_t\omega_p)-\Pi_\psi^s(\Theta_t\omega_p)\|_{{\mathcal L}(X)}\leq \frac{e^{-\alpha_\psi(\omega_p)} + e^{-\alpha_\varphi(\omega_p)}}{1-e^{-(\alpha_\psi(\omega_p)+\alpha_\varphi(\omega_p))}}\, \epsilon.
	$$
\end{theorem}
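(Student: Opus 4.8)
The plan is to deduce the continuous statement from its discrete counterpart, Theorem~\ref{th-continuity-projection-discrete}, by freezing the instant at which the projections are compared and discretizing the two co-cycles there. Fix $t\in\mathbb{R}$ and write $\omega_q:=\Theta_t\omega_p$. First I would discretize both co-cycles at the base point $\omega_q$: by Theorem~\ref{th-continousED-implies-discreteED} the sequences $\{\varphi_n(\omega_q):=\varphi(1,\Theta_n\omega_q)\}_{n\in\mathbb{Z}}$ and $\{\psi_n(\omega_q):=\psi(1,\Theta_n\omega_q)\}_{n\in\mathbb{Z}}$ admit discrete exponential dichotomies with projections $P_n^{\varphi}(\omega_q)=\Pi_\varphi^s(\Theta_n\omega_q)$ and $P_n^{\psi}(\omega_q)=\Pi_\psi^s(\Theta_n\omega_q)$, with the common bound $K$ occurring in the hypothesis, and exponents $\alpha_\varphi(\omega_q)$, $\alpha_\psi(\omega_q)$. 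Since $K$, $\alpha_\varphi$ and $\alpha_\psi$ are $\Theta$-invariant, these data coincide with $K(\omega_p)$, $\alpha_\varphi(\omega_p)$ and $\alpha_\psi(\omega_p)$, and in particular $P_0^{\varphi}(\omega_q)=\Pi_\varphi^s(\omega_q)$ and $P_0^{\psi}(\omega_q)=\Pi_\psi^s(\omega_q)$.

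The key step, and the one I expect to be the main obstacle, is to verify the discrete perturbation hypothesis at the base point $\omega_q$, namely $\sup_{n\in\mathbb{Z}}\{K(\omega_q)\|\varphi_n(\omega_q)-\psi_n(\omega_q)\|_{\mathcal{L}(X)}\}\leq\epsilon$. This is exactly the continuous-to-discrete bridge already carried out in the proof of Theorem~\ref{th-robustness-uniform-ED-co-cycle}: the quantities involved are the time-one generators $\varphi_n(\omega_q)=\varphi(1,\Theta_{n+t}\omega_p)$ and $\psi_n(\omega_q)=\psi(1,\Theta_{n+t}\omega_p)$, so reading the continuous hypothesis along the orbit at the base point $\Theta_{n+t}\omega_p$ and selecting the value $t=1$ in the supremum gives $K(\omega_p)\|\varphi(1,\Theta_{n+t}\omega_p)-\psi(1,\Theta_{n+t}\omega_p)\|_{\mathcal{L}(X)}\leq\epsilon$ for every $n\in\mathbb{Z}$. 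The $\Theta$-invariance of $K$ makes this bound uniform and independent of the shift, which is precisely the discrete hypothesis. This is the only genuinely delicate point, since the time-one increments along the whole orbit are not literally among the maps $\varphi(t,\omega_p)$ measured from the single base point $\omega_p$; it must be justified, as in Theorem~\ref{th-robustness-uniform-ED-co-cycle}, by invoking the perturbation estimate at each orbit point with the same invariant constant.

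With this in hand I would apply Theorem~\ref{th-continuity-projection-discrete} to the two discretizations at the base point $\omega_q$. Evaluating its conclusion at the index $n=0$ yields
\begin{equation*}
\|\Pi_\varphi^s(\omega_q)-\Pi_\psi^s(\omega_q)\|_{\mathcal{L}(X)}\leq \frac{e^{-\alpha_\psi(\omega_q)}+e^{-\alpha_\varphi(\omega_q)}}{1-e^{-(\alpha_\psi(\omega_q)+\alpha_\varphi(\omega_q))}}\,\epsilon,
\end{equation*}
and substituting $\omega_q=\Theta_t\omega_p$ together with the $\Theta$-invariance of the exponents turns the right-hand side into the $t$-independent constant appearing in the statement. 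Since $t\in\mathbb{R}$ was arbitrary and $\{\Theta_t\omega_p:t\in\mathbb{R}\}$ ranges over the orbit along which the supremum is taken, passing to the supremum over $t$ completes the proof. Everything beyond the bookkeeping of invariances is a direct transcription of the discrete result through the discretization Theorem~\ref{th-continousED-implies-discreteED}.
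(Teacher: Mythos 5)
Your argument is correct and is exactly the route the paper intends (the paper states this theorem without proof, presenting it as the combination of Theorems \ref{th-continousED-implies-discreteED} and \ref{th-continuity-projection-discrete}): rebasing at $\Theta_t\omega_p$ for arbitrary real $t$, applying the discrete continuity result at $n=0$, and using the $\Theta$-invariance of $K$ and of the exponents is precisely the required bookkeeping. The delicate point you flag --- that the time-one increments along the orbit are controlled by reading the perturbation hypothesis at each base point $\Theta_s\omega_p$ with the same invariant constant --- is resolved exactly as in the proof of Theorem \ref{th-robustness-uniform-ED-co-cycle}, so there is no gap.
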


\section{Hyperbolicity on nonautonomous random differential equations}\label{sec-applications}

\par This section is dedicated to apply the exponential dichotomies results to nonautonomous 
random/stochastic differential equations. First we prove the existence of an exponential dichotomy for a linear nonautonomous random dynamical system obtained from a nonautonomous random perturbation of an autonomous problem. Then we apply this result to show existence and continuity of random hyperbolic solutions for nonautonomous random semilinear differential equations.
\par Before we start we remark some facts about nonautonomous random differential equations and generation of nonautonomous random dynamical systems.	
Let $(\theta,\Omega,\mathcal{F},\mathbb{P})$ be a random flow, and
consider the following  initial value problem
\begin{equation}\label{eq-generation-co-cycle}
\dot{y}=f(t,\theta_t\omega,y), \ \ t>\tau\hbox{ and } y(\tau)=y_0.
\end{equation}
\par Assume that for almost all $\omega\in \Omega$ the solutions of \eqref{eq-generation-co-cycle}
are associated with a nonlinear evolution process $\mathcal{S}_\omega:=\{S_\omega(t,s): t\geq s\}$. More precisely, if for every $\tau \in \mathbb{R}$ and $y_0\in X$ 
there exists $[\tau,+\infty)\ni t\mapsto y(t,\tau,\omega;y_0)$ a solution for
\eqref{eq-generation-co-cycle}, then we define 
$S_\omega(t,\tau,\omega)u_0:=y(t,\tau,\omega;u_0)$.
\par Another equivalent way to generate a dynamical system from problem \eqref{eq-generation-co-cycle} is the following:
define $x(t):=y(t+\tau,\tau,\omega;u_0)$, for every $t\geq 0$ and some fixed $\tau\in \mathbb{R}$. Hence we obtain the initial value problem 
\begin{equation*}\label{eq-generation-co-cycle-for-x}
	\dot{x}=f(t+\tau,\theta_{t+\tau}\omega,x)\ \ t>0\hbox{ and } x(0)=x_0.
\end{equation*}
Now, the relation
$f(t+\tau,\theta_{t+\tau}\omega)=f(\Theta_t(t,\theta_\tau\omega))$,
where $\Theta$ is the flow
$\{\Theta_t:\mathbb{R}\times\Omega\to \mathbb{R}\times\Omega\}_{t\in \mathbb{R}}$ 
defined as $\Theta_t(\tau,\omega)=(t+\tau,\theta_t\omega)$, 
leads us to consider a
nonautonomous random differential with a nonlinearity 
driven by the flow $\Theta$ i.e.
\begin{equation}\label{eq-generation-co-cycle-for-z}
\dot{z}=f(\Theta_t(\tau,\omega),z), \ \ t>0\hbox{ and } z(0)=z_0, \hbox{ for each }
(\tau,\omega)\in \mathbb{R}\times \Omega.
\end{equation}
Thus, the solutions of \eqref{eq-generation-co-cycle-for-z} defines a 
nonautonomous random dynamical system 
$\varphi(t,\tau,\omega)z_0:= z(t,(\tau,\omega);z_0)$.
\par Therefore, we rewrite problem \eqref{eq-generation-co-cycle} using formulation of \eqref{eq-generation-co-cycle-for-z} as follows
\begin{equation*}
\varphi(t,\tau,\omega)y_0:=y(t+\tau,\tau,\theta_{-\tau}\omega;y_0).
\end{equation*}
Or equivalently, for almost all $\omega\in \Omega$ we have that
\begin{equation*}
\varphi(t,\tau,\omega)=S_{\theta_{-\tau}\omega}(t+\tau,\tau), \ t\geq 0, \ \tau\in \mathbb{R}.
\end{equation*}
Consequently, we study asymptotic behavior of both dynamical systems: the co-cycle 
$(\varphi,\Theta)_{(X,\mathbb{R}\times \Omega)}$ generated by
\eqref{eq-generation-co-cycle-for-z}, and the family of evolution processes 
$\{\mathcal{S}_\omega;\, \omega\in \Omega\}$ associated with
\eqref{eq-generation-co-cycle}. 
%
%\begin{remark}
%	Let $\tilde{\Omega}\in \mathcal{F}$ with
%	$\mathbb{P}(\tilde{\Omega})=1$ such that for every 
%	$\omega\in \tilde{\Omega}$ there exists a
%	\textit{global solution} 
%	$\xi(\cdot,\omega):\mathbb{R}\to X$ for $\mathcal{S}_\omega$, i.e., 
%	\begin{equation*}
%	S_\omega(t,s)\xi(s,\omega)=\xi(t,\omega), \hbox{ for every } t\geq s.
%	\end{equation*} 
%	Then $\zeta:\mathbb{R}\times\tilde{\Omega}\to X$ defined by
%	$\zeta(\tau,\omega):=\xi(\tau,\theta_{-\tau}\omega)$ satisfies
%	\begin{equation*}
%	\varphi(t,\tau,\omega)\zeta(\tau,\omega)=\zeta(\Theta_t(\tau,\omega)), \hbox{ for all }
%	t\geq 0 \hbox{ and } (\tau,\omega)\in \mathbb{R}\times\tilde{\Omega}.
%	\end{equation*}
%	Thus, $\zeta$ is a global solution for $(\varphi,\Theta)_{(X,\mathbb{R}\times \Omega)}$. 
%\end{remark}

\subsection{Linear nonautonomous random differential equations}\label{subsec-app-linear}
\quad
\par In this subsection we shall study linear nonautonomous random differential equations on a Banach space $X$. We provide conditions to guarantee the existence of an exponential dichotomy for a nonautonomous random perturbation of a hyperbolic autonomous problem. The results contained here were inspired by those of Chow and Leiva \cite{Chow-Leiva-existence-roughness} in a deterministic context, where the base flow is a group over a Hausdorff compact set.
\par Let $A$ be the generator of a strongly continuous semigroup
$\{e^{At}:\, t\geq 0\}$ and $B:\mathbb{R}\times\Omega\rightarrow \mathcal{L}(X)$ be a bounded operator depending on parameters on $\mathbb{R}\times\Omega$. We study the linear problem
\begin{equation}\label{eq-autonomous-perturbation-by-nonautonomous-random}
\dot{x}=Ax+B(\Theta_t\omega_\tau)x, \ \ t>0\hbox{ and } x(0)=x_0.
\end{equation}
%Setting $A(t,\omega):=A+B(t,\omega)$, for every and writing 
%$x(t):=u(t+\tau)$, for $t\geq 0$ and $\tau >0$ fixed we obtain
% \begin{equation}\label{eq-autonomous-perturbation-by-nonautonomous-random}
% \dot{x}=A(\Theta_t\omega_\tau)x, \ \ t>0\hbox{ and } x(0)=u_0,
% \end{equation}
where $\omega_\tau:=(\tau,\omega)$ and for every $t\in \mathbb{R}$ the map
$\Theta_t:\mathbb{R}\times\Omega \to \mathbb{R}\times\Omega$ is defined by 
$\Theta_t\omega_\tau:=(t+\tau,\theta_t\omega)$.

To study equation \eqref{eq-autonomous-perturbation-by-nonautonomous-random}, we consider the following family of integral equations
\begin{equation*}
x(t,\tau,\omega;x_0)=e^{At}x_0+\int_{0}^{t}e^{A(t-s)}B(\Theta_s\omega_\tau)x(s)ds,\  x_0\in X, \ 
t\geq 0, \ \omega_\tau\in \mathbb{R}\times \Omega.
\end{equation*}
\par We have the following result on the robustness of exponential dichotomies for 
linear nonautonomous random differential equations.
%
%\begin{theorem}\label{th-existence-nonautnonomous-RDS}
%	Let $A$ be the generator of an strongly continuous semigroup
%	$\{e^{At}:t\geq0 \}$ on $X$ and $B:\mathbb{R}\times\Omega\rightarrow \mathcal{L}(X)$
%	such that \textcolor{red}{conditions over B}
%	for each $\omega\in \Omega$
%	$\mathbb{R}\ni t\mapsto B(t,\theta_t\omega)$ is continuous.
%	\par Then for each $\omega_\tau\in \mathbb{R}\times \Omega$ and 
%	$x_0\in X$ the problem
%	\eqref{eq-autonomous-perturbation-by-nonautonomous-random}
%	has a unique mild solution $\varphi(t,\omega_\tau)x_0$ satisfying
%	\begin{equation*}
%	\varphi(t,\omega_\tau)x_0=e^{At}x_0+\int_{0}^{t}e^{A(t-s)}B(\Theta_s\omega_\tau)\varphi(s,\omega_\tau)x_0ds, \ 
%	t\geq 0, \ \omega_\tau\in \mathbb{R}\times \Omega.
%	\end{equation*}
%	Moreover, $(\varphi,\Theta)_{(X,\mathbb{R}\times\Omega)}$ is a nonautonomous random dynamical system.
%\end{theorem}
%\par \textcolor{red}{For the proof see} \cite{Chow-Leiva}.

\begin{theorem}\label{th-robustness-nonautonomous-random-perturbation-of-nonautonomous}
	Let $(\varphi,\Theta)$ be a linear nonautonomous random dynamical system with
	\begin{equation}
	L(\omega_\tau):=\sup_{t\in \mathbb{R}}\|\varphi(t,\omega_\tau)\|_{\mathcal{L}(X)}<+\infty, 
	\ \hbox{ for each } \omega_\tau\in \mathbb{R}\times \Omega,
	\end{equation}
	where $\sup_{t\in\mathbb{R}}L(\Theta_t\omega_\tau)\leq  L(\omega_\tau)$.
	Suppose that $(\varphi,\Theta)$ admits an exponential dichotomy 
	with exponent $\alpha$ and bound $K$. 
	Then there exists a $\Theta$-invariant map $\epsilon:\mathbb{R}\times\Omega\to (0,+\infty)$
	such that for every 
	$B:\mathbb{R}\times\Omega\rightarrow \mathcal{L}(X)$ 
%	satisfying $\sup_{t\in\mathbb{R}}\|B(\Theta_t\omega_\tau)\|_{\mathcal{L}(X)}<+\infty$
 	with 
	\begin{equation*}
	\sup_{0\leq t\leq  1}\|\int_0^tB(\Theta_s\omega_\tau)xds\|_X<
	\epsilon(\omega_\tau)\|x\|_X
	\end{equation*} 
	a nonautonomous random 
	dynamical system satisfying
	\begin{equation}\label{eq-th-robustness-nonautonomous-random-perturbation-of-nonautonomous}
	\psi(t,\omega_\tau)=\varphi(t,\omega_\tau)+\int_0^t\varphi(t-s,\Theta_s\omega_{\tau})
	B(\Theta_s\omega_\tau)\psi(s,\omega_\tau) ds
	\end{equation}
	admits an exponential dichotomy with bound $\hat{M}$ and exponent $\tilde{\alpha}$ provided in Theorem
	\ref{th-robustness-uniform-ED-co-cycle}.
\end{theorem}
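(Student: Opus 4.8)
The plan is to deduce the statement from the abstract continuous robustness result, Theorem~\ref{th-robustness-uniform-ED-co-cycle}, applied to the pair consisting of the unperturbed co-cycle $(\varphi,\Theta)$ and the perturbed one $(\psi,\Theta)$ determined by the variation of constants formula \eqref{eq-th-robustness-nonautonomous-random-perturbation-of-nonautonomous}. That theorem requires two inputs: a uniform short-time bound $\sup_{0\le t\le 1}\|\varphi(t,\omega_\tau)\|_{\mathcal L(X)}<+\infty$ satisfying $L(\Theta_t\omega_\tau)\le L(\omega_\tau)$, and the short-time closeness $\sup_{0\le t\le 1}\|\varphi(t,\omega_\tau)-\psi(t,\omega_\tau)\|_{\mathcal L(X)}\le\delta(\omega_\tau)$, where $\delta$ is the $\Theta$-invariant threshold furnished there. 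The first input is immediate from the hypothesis $L(\omega_\tau)=\sup_{t\in\mathbb R}\|\varphi(t,\omega_\tau)\|_{\mathcal L(X)}<+\infty$ together with its monotonicity along the orbit. Thus the whole proof reduces to exhibiting a $\Theta$-invariant map $\epsilon$ for which the integral smallness of $B$ forces the second input.

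First I would record that \eqref{eq-th-robustness-nonautonomous-random-perturbation-of-nonautonomous} determines $\psi$ uniquely as a linear nonautonomous random dynamical system: on each $[0,T]$ the operator $\psi(\cdot,\omega_\tau)\mapsto\varphi(\cdot,\omega_\tau)+\int_0^{\cdot}\varphi(\cdot-s,\Theta_s\omega_\tau)B(\Theta_s\omega_\tau)\psi(s,\omega_\tau)\,ds$ is a contraction on $C([0,T],\mathcal L(X))$ for $T$ small, with measurability and the cocycle identity following from uniqueness in the usual way. In particular $\ell(\omega_\tau):=\sup_{0\le t\le 1}\|\psi(t,\omega_\tau)\|_{\mathcal L(X)}<+\infty$; this short-time bound is precisely what later feeds the factor $\sup_{0\le t\le1}\{\|\psi(t,\omega_\tau)\|_{\mathcal L(X)}e^{\tilde\alpha(\omega_\tau)t}\}$ in $\hat M$.

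The heart of the argument, and the step I expect to be the main obstacle, is the estimate of $w(t):=\psi(t,\omega_\tau)-\varphi(t,\omega_\tau)$ on $[0,1]$. The difficulty is that $B$ is controlled only through its antiderivative $\mathcal B_{\omega_\tau}(t):=\int_0^t B(\Theta_s\omega_\tau)\,ds$, whose operator norm is bounded by $\epsilon(\omega_\tau)$ on $[0,1]$, and never pointwise; so a naive Gr\"onwall inequality, which would require $\int_0^1\|B(\Theta_s\omega_\tau)\|_{\mathcal L(X)}\,ds$, is unavailable. The way around this is integration by parts. Writing $B(\Theta_s\omega_\tau)\,ds=d\mathcal B_{\omega_\tau}(s)$ in $w(t)=\int_0^t\varphi(t-s,\Theta_s\omega_\tau)B(\Theta_s\omega_\tau)\psi(s,\omega_\tau)\,ds$ and integrating by parts (using $\mathcal B_{\omega_\tau}(0)=0$ and $\varphi(0,\cdot)=Id_X$) transfers the smallness onto $\mathcal B_{\omega_\tau}$, yielding a boundary term controlled by $\epsilon(\omega_\tau)\,\ell(\omega_\tau)$ plus integral remainders carrying a factor $\mathcal B_{\omega_\tau}$ that can be absorbed, for $\epsilon$ small, by a contraction/Gr\"onwall argument in $\sup_{0\le t\le1}\|w(t)\|_{\mathcal L(X)}$. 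One then fixes $\epsilon(\omega_\tau)$ to be $\Theta$-invariant and small enough that the resulting bound does not exceed $\delta(\omega_\tau)$; the $\Theta$-invariance of $K,\alpha,\delta$ and the inequality $L(\Theta_t\omega_\tau)\le L(\omega_\tau)$ make the same estimate hold at every shifted base point $\Theta_n\omega_\tau$, as is needed to invoke Theorem~\ref{th-robustness-uniform-ED-co-cycle}. The genuinely delicate point is that for an arbitrary operator-valued $B$ the integration by parts leaves a commutator of $\mathcal B_{\omega_\tau}$ with the generator of $\varphi$; this is where extra care is needed (and where the structure in the applications, e.g.\ a scalar $B$ for which the commutator vanishes, makes the estimate transparent).

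With the closeness $\sup_{0\le t\le1}\|\varphi(t,\omega_\tau)-\psi(t,\omega_\tau)\|_{\mathcal L(X)}\le\delta(\omega_\tau)$ established, Theorem~\ref{th-robustness-uniform-ED-co-cycle} applies directly and yields that $(\psi,\Theta)$ admits an exponential dichotomy with exponent $\tilde\alpha$ and bound $\hat M(\omega_\tau)=M(\omega_\tau)\sup_{0\le t\le1}\{\|\psi(t,\omega_\tau)\|_{\mathcal L(X)}e^{\tilde\alpha(\omega_\tau)t}\}$, the data $M,\tilde\alpha$ being those produced by the discrete robustness Theorem~\ref{th-discrete-robstness-random-perturbations-of-autonomous-nonuniform-case}. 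This is exactly the asserted conclusion.
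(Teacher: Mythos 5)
Your top-level reduction is exactly the one the paper uses: verify the short-time bound on $\varphi$, establish the short-time closeness $\sup_{0\le t\le 1}\|\varphi(t,\omega_\tau)-\psi(t,\omega_\tau)\|_{\mathcal L(X)}\le\delta(\omega_\tau)$, and then invoke Theorem~\ref{th-robustness-uniform-ED-co-cycle}. The divergence, and the problem, lies in how you propose to obtain the closeness estimate. The paper's own proof is a two-line Gr\"onwall argument: from \eqref{eq-th-robustness-nonautonomous-random-perturbation-of-nonautonomous} it writes $\|\psi(t,\omega_\tau)x\|_X\le L(\omega_\tau)\|x\|_X+\int_0^t L(\Theta_s\omega_\tau)\|B(\Theta_s\omega_\tau)\|_{\mathcal L(X)}\|\psi(s,\omega_\tau)x\|_X\,ds$, concludes $\|\psi(t,\omega_\tau)\|_{\mathcal L(X)}\le L_1(\omega_\tau):=L(\omega_\tau)e^{L(\omega_\tau)\epsilon(\omega_\tau)}$ on $[0,1]$, bounds the difference by $\epsilon(\omega_\tau)L(\omega_\tau)L_1(\omega_\tau)$, and then chooses $\epsilon$ so that $\epsilon L L_1<\delta$, checking $\Theta$-invariance at the end. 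In other words, the authors use the smallness hypothesis as though it controlled $\int_0^1\|B(\Theta_s\omega_\tau)\|_{\mathcal L(X)}\,ds$, which is how it is evidently meant to be read and which holds in all the applications of Section 3, where $B(\Theta_t\omega_\tau)$ is a uniformly small scalar multiple of the identity.

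You instead take the hypothesis literally as a bound only on the antiderivative $\int_0^tB(\Theta_s\omega_\tau)\,ds$ and propose integration by parts. The concern is legitimate as a reading of the statement, but your resolution is not a proof: you yourself flag that the integration by parts produces a term in which the $s$-derivative falls on $\varphi(t-s,\Theta_s\omega_\tau)$, and you do not control it. For a cocycle generated by a $C_0$-semigroup this term involves the unbounded generator (the map $s\mapsto\varphi(t-s,\Theta_s\omega_\tau)$ is not differentiable in operator norm, and its strong derivative leaves $\mathcal L(X)$), so there is no commutator of bounded operators to absorb, and nothing in the hypotheses gives you a handle on it. As written, the central estimate $\sup_{0\le t\le1}\|\varphi(t,\omega_\tau)-\psi(t,\omega_\tau)\|_{\mathcal L(X)}\le\delta(\omega_\tau)$ is therefore not established, and that estimate is the entire content of the theorem beyond citing Theorem~\ref{th-robustness-uniform-ED-co-cycle}. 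The repair is either to argue as the paper does, applying Gr\"onwall to the pointwise norm of $B$, or, if one insists on using only the antiderivative, to follow Henry's scheme for unbounded perturbations, which adds the pointwise hypothesis $\|B(\Theta_t\omega_\tau)x\|_X\le b(\omega_\tau)\|x\|_X$ and interpolates between $b$ and the antiderivative bound $q$ (compare the remark following Theorem~\ref{th-robustness-nonautonomous-random-perturbation-of-autonomous}).
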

\begin{proof}
	Let $(\psi,\Theta)$ be a nonautonomous random dynamical system satisfying \eqref{eq-th-robustness-nonautonomous-random-perturbation-of-nonautonomous}.
	Then
	\begin{equation*}
	\|\psi(t,\omega_\tau)x\|_X\leq L(\omega_\tau)\|x\|_X+\int_{0}^{t}L(\Theta_s\omega_\tau)\|B(\Theta_s\omega_\tau)\|_{\mathcal{L}(X)} \|\psi(s,\omega_\tau)x\|_X \, ds, \ 0\leq t\leq 1.
	\end{equation*}
	From Gr\"onwall's inequality we obtain
	$\|\psi(t,\omega_\tau)\|_{\mathcal{L}(X)}\leq L_1(\omega_\tau):= L(\omega_\tau)e^{L(\omega_\tau)\epsilon(\omega_\tau)}$, for every $0\leq t\leq 1$.
	\par Hence, for every $0\leq t\leq 1$ 
	\begin{equation*}
	\|\varphi(t,\omega_\tau)x-\psi(t,\omega_\tau)x\|_X %&\leq&\int_{0}^{t}L(\omega_\tau)\|B(\Theta_s\omega_\tau)x\|_{X}L_1(\omega_\tau)ds\\
	\leq\epsilon(\omega_\tau) L(\omega_\tau)L_1(\omega_\tau)\|x\|_X.
	\end{equation*}
	Finally, since $(\varphi,\Theta)$ admits an exponential dichotomy, there exists a 
	$\Theta$-invariant measurable map $\delta>0$ as in Theorem
	\ref{th-robustness-uniform-ED-co-cycle}. Therefore, for each $\omega_\tau$ choose $\epsilon=\epsilon(\omega_\tau)>0$ such that 
	$\epsilon(\omega_\tau) L(\omega_\tau)
	L_1(\omega_\tau)<\delta(\omega_\tau)$. Note that, for every $t\in \mathbb{R}$ 
	$L_2(\Theta_t\omega_\tau):= L(\Theta_t\omega_\tau)
	L_1(\Theta_t\omega_\tau)\leq L_2(\omega_\tau)$, therefore we choose $\epsilon(\Theta_t\omega_\tau)=\epsilon(\omega_\tau)$, and the proof is complete.
\end{proof}

\begin{remark}
	Theorem \ref{th-robustness-nonautonomous-random-perturbation-of-nonautonomous} is saying that if problem 
	$\dot{x}=A(\Theta_t\omega_\tau)x$
	generates a nonautonomous random dynamical system with an exponential dichotomy then
	for the class of bounded linear perturbation $B$ given in the Theorem the perturbed nonautonomous random dynamical system generated by problem 
	\begin{equation}\label{eq-remark-robustness-nonautonomous-random-perturbation-of-nonautonomous}
	\dot{x}=A(\Theta_t\omega_\tau)x+B(\Theta_t\omega_\tau)x,
	\ x(0)=x_0\in X, \ 
	t\geq 0,
	\end{equation}
	admits an exponential dichotomy.
\end{remark}
\par Now, as a corollary we have the following robustness result for an autonomous problem under nonautonomous random perturbation.

\begin{theorem}\label{th-robustness-nonautonomous-random-perturbation-of-autonomous}
	%		Let $-A$ be a sectorial operator  with sector
	%		\begin{equation*}
	%		S_{-a,\phi}=\{\lambda\in \mathbb{C}:\phi\leq |\arg(\lambda+a)|\leq \pi, \
	%		 \lambda\neq a \} \subset \rho(-A)
	%		\end{equation*}
	%		where $\rho(-A)$ is the resolvent of $-A$. Suppose that the spectrum 
	%		$\sigma(A)$ does not intersect the strip region 
	%		$\{\lambda:\beta\leq Re\lambda \leq \alpha\}$, where $\beta<\lambda<a$.
	Assume that $\mathcal{A}$ generates a \textbf{analytic semigroup} $\{e^{\mathcal{A}t}: t\geq 0\}$, and that
	the spectrum of $\mathcal{A}$, $\sigma(\mathcal{A})$ does not intersect the imaginary 
	axis and that the set $\sigma^+:=\{\lambda\in \sigma(\mathcal{A}):\hbox{ Re}\lambda>0\}$ is compact.
	Then there exists a $\Theta$-invariant map $\epsilon:\mathbb{R}\times\Omega\to (0,+\infty)$
	such that, if 
	$B:\mathbb{R}\times\Omega\rightarrow \mathcal{L}(X)$ 
	satisfies
	%$\sup_{t\in\mathbb{R}}\|B(\Theta_t\omega_\tau)\|_{\mathcal{L}(X)}<+\infty$ with 
	\begin{equation*}
	\sup_{0\leq t\leq  1}\|\int_0^tB(\Theta_s\omega_\tau)xds\|_X<
	\epsilon(\omega_\tau)\|x\|_X,
	\end{equation*} 
%	a nonautonomous random 
%	dynamical system satisfying
%	\begin{equation}\label{eq-th-robustness-nonautonomous-random-perturbation-of-autonomous}
%	S_\epsilon(t,s)=e^{\mathcal{A}t}+\int_s^te^{\mathcal{A}(t-r)} B_{\epsilon}(\Theta_r\omega_\tau)	S_\epsilon(r,s)\, dr
%	\end{equation}
%	admits an exponential dichotomy for every $0<\epsilon<\epsilon(\omega_{\tau}]$ small enough. Moreover, $\epsilon:\mathbb{R}\times\Omega\to (0,1]$ is $\Theta$-invariant and 
	then any nonautonomous random dynamical system satisfying
	\begin{equation}\label{eq-existence-nonautonomous-random-perturbation}
	\varphi(t,\omega_\tau)=e^{\mathcal{A}t}+\int_0^te^{\mathcal{A}(t-s)} B(\Theta_s\omega_\tau)	\varphi(s,\omega_\tau)\, ds
	\end{equation}
	admits an exponential dichotomy with bound $\hat{M}$ and exponent $\tilde{\alpha}$ provided in Theorem
	\ref{th-robustness-uniform-ED-co-cycle}.
\end{theorem}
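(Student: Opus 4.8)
The plan is to reduce the statement to the robustness result already proved, Theorem \ref{th-robustness-nonautonomous-random-perturbation-of-nonautonomous}, by realizing the autonomous semigroup $\{e^{\mathcal{A}t}:t\geq 0\}$ as a base-point–independent nonautonomous random dynamical system. The only genuinely new ingredient is that this unperturbed co-cycle admits an exponential dichotomy; everything else is a verification that the hypotheses of the parent theorem hold with $\Theta$-invariant data.

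First I would show that the spectral hypotheses force $e^{\mathcal{A}t}$ to admit a deterministic, autonomous exponential dichotomy. Since $\sigma^+=\{\lambda\in\sigma(\mathcal{A}):\mathrm{Re}\,\lambda>0\}$ is compact and separated from the rest of the spectrum, I would introduce the Riesz spectral projection
\[
Q=\frac{1}{2\pi i}\int_\Gamma (\lambda-\mathcal{A})^{-1}\,d\lambda,
\]
with $\Gamma$ a rectifiable contour enclosing $\sigma^+$, and set $X^+=R(Q)$, $X^-=R(\mathrm{Id}_X-Q)$. On $X^+$ the restriction $\mathcal{A}|_{X^+}$ is bounded with spectrum $\sigma^+\subset\{\mathrm{Re}\,\lambda\geq a>0\}$, so $e^{\mathcal{A}t}|_{X^+}$ extends to a group decaying like $e^{at}$ backwards; on $X^-$ the analyticity of the semigroup together with $\sigma(\mathcal{A})$ avoiding the imaginary axis gives a negative growth bound and hence forward decay like $e^{-bt}$. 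This is the classical construction of Henry \cite[Section 1.5]{Henry-1}, and it yields an exponential dichotomy with constant projection $\Pi^s=\mathrm{Id}_X-Q$, constant bound $K$, and constant exponent $\alpha=\min\{a,b\}>0$.

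Next I would lift this to a nonautonomous random dynamical system by setting $\varphi(t,\omega_\tau):=e^{\mathcal{A}t}$ for every $\omega_\tau\in\mathbb{R}\times\Omega$; since it does not depend on the base point, it is trivially a co-cycle driven by $\Theta$. The constant projection $\Pi^s(\omega_\tau)=\mathrm{Id}_X-Q$ is strongly measurable and $\Theta$-invariant, while the constant maps $K,\alpha$ are $\Theta$-invariant, so $(\varphi,\Theta)$ admits an exponential dichotomy in the sense of the paper. Moreover $L(\omega_\tau):=\sup_{0\leq t\leq 1}\|e^{\mathcal{A}t}\|_{\mathcal{L}(X)}<+\infty$ by strong continuity of the semigroup, and being constant it satisfies the required invariance $L(\Theta_t\omega_\tau)\leq L(\omega_\tau)$.

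All hypotheses of Theorem \ref{th-robustness-nonautonomous-random-perturbation-of-nonautonomous} are therefore met with $\varphi(t,\omega_\tau)=e^{\mathcal{A}t}$. Applying it produces a $\Theta$-invariant $\epsilon$ such that, whenever $B$ obeys the stated smallness bound, the perturbed co-cycle solving \eqref{eq-existence-nonautonomous-random-perturbation} (playing the role of $\psi$ in the parent theorem) admits an exponential dichotomy with bound $\hat{M}$ and exponent $\tilde{\alpha}$ from Theorem \ref{th-robustness-uniform-ED-co-cycle}. The main obstacle is the autonomous dichotomy step, and within it the forward decay on $X^-$: a priori the spectrum $\sigma(\mathcal{A})\cap\{\mathrm{Re}\,\lambda<0\}$ is only known to avoid the imaginary axis and could accumulate on it. Here I would use sectoriality of $\mathcal{A}$, namely that the spectrum lies in a sector opening to the right, so its intersection with $\{\mathrm{Re}\,\lambda\leq 0\}$ is bounded and hence compact, forcing $\sup\{\mathrm{Re}\,\lambda:\lambda\in\sigma^-\}<0$; since for analytic semigroups the spectral bound equals the growth bound, this yields $\|e^{\mathcal{A}t}|_{X^-}\|\leq Ce^{-bt}$. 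Once this is in hand, the reduction is purely a matter of checking invariance and measurability, which is routine.
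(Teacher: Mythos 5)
Your proposal follows essentially the same route as the paper: decompose $X$ via the Riesz projection over $\sigma^+$, obtain the autonomous exponential dichotomy of $\{e^{\mathcal{A}t}:t\geq 0\}$ with constant (hence $\Theta$-invariant) bound, exponent and projection, and then invoke Theorem \ref{th-robustness-nonautonomous-random-perturbation-of-nonautonomous}. The only blemish is the orientation of the sector: for the generator of an analytic semigroup the spectrum lies in a sector opening to the \emph{left}, so what is bounded is $\sigma(\mathcal{A})$ intersected with a vertical strip $\{-c\le \mathrm{Re}\,\lambda\le 0\}$ rather than with the whole left half-plane, but this still forces $\sup\{\mathrm{Re}\,\lambda:\lambda\in\sigma^-\}<0$ and the rest of your argument goes through unchanged.
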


\begin{proof}
	These assumptions on $\mathcal{A}$ implies the existence of an 
	exponential dichotomy for $\{e^{\mathcal{A}t}: t\geq 0\}$, see for instance \cite{Henry-1}. 
	In fact,
	if $\gamma$ is a smooth closed simple curve in 
	$\rho(\mathcal{A})\cap\{\lambda\in \mathbb{C}:\hbox{ Re}\lambda>0\}$ oriented counterclockwise and enclosing $\sigma^+$ let
	\begin{equation*}
	Q=Q(\sigma^+)=\frac{1}{2\pi i}\int_{\gamma}(\lambda -\mathcal{A})^{-1}d\lambda,
	\end{equation*}
	and define $X^+=QX$, $X^-=(I-Q)X$, and $\mathcal{A}^\pm:=\mathcal{A}_\pm$.
	At this scenario, $\mathcal{A}^-$ generates a strongly continuous semigroup on $X^-$,
	$\mathcal{A}^+\in \mathcal{L}(X^+)$, and there are $M\geq 1$, $\beta>0$ such that
	\begin{eqnarray*}
		&\|e^{\mathcal{A}^+t}\|_{\mathcal{L}(X^+)}\leq Me^{\beta t}, \ t\leq 0;\\
		&\|e^{\mathcal{A}^-t}\|_{\mathcal{L}(X^-)}\leq Me^{-\beta t}, \ t\geq 0.
	\end{eqnarray*}
	Now apply Theorem \ref{th-robustness-nonautonomous-random-perturbation-of-nonautonomous}.
	\end{proof}

\begin{remark}
	Similar as in Henry \cite{Henry-1}, Theorem \ref{th-robustness-nonautonomous-random-perturbation-of-autonomous} can be proved in the parabolic case, when  $-\mathcal{A}$ is a sectorial operator with $\mathcal{A}\in \mathcal{L}(X^\alpha, X)$, for a
	unbounded perturbation $B:\mathbb{R}\times\Omega\to \mathcal{L}(X^\alpha, X)$, where $X^\alpha$ is a fractional power of $X$, with $0\leq\alpha<1$. In this situation 
	there exists a $\Theta$-invariant $\epsilon$ such that if $B$ satisfies 
	\begin{eqnarray*}
		&\|B(\Theta_t\omega_\tau)x\|_X\leq b(\omega_\tau) \|x\|_{X^\alpha},\\
		&\sup_{0\leq t\leq  1}\|\int_0^tB(\Theta_s\omega_\tau)x\,ds\|_X<
		q(\omega_\tau)\|x\|_{X^\alpha},
	\end{eqnarray*}
	and $q(\omega_\tau)^\delta b(\omega_\tau)^{1-\delta}\leq \epsilon(\omega_\tau)$ with
	$0<\delta<(1-\alpha)/2$, then any nonautonomous random dynamical system satisfying
	\eqref{eq-existence-nonautonomous-random-perturbation}
	admits an exponential dichotomy in $X^\alpha$.
	
\end{remark}

\begin{remark}
	Note that, Theorem 
	\ref{th-robustness-nonautonomous-random-perturbation-of-autonomous} can be proved independently of Theorem \ref{th-robustness-nonautonomous-random-perturbation-of-nonautonomous} and any nonautonomous random dynamical systems $(\varphi_\epsilon,\Theta)$ satisfying
	\eqref{eq-existence-nonautonomous-random-perturbation} will satisfy the hypotheses of Theorem \ref{th-robustness-nonautonomous-random-perturbation-of-nonautonomous}. 
\end{remark}
%	
%	\begin{proof}
%		From Theorem \ref{th-existence-nonautnonomous-RDS}
%		equation
%		\begin{equation*}
%		\dot{x}=Ax+B(\Theta_t\omega_\tau)x
%		\end{equation*}
%		generate the nonautonomous random dynamical system
%		$(\varphi,\Theta)$ given  by
%		\begin{equation*}
%			\varphi(t,\omega_\tau)x=T(t)x+\int_{0}^{t}T(t-s)B(\Theta_s\omega_\tau)\varphi(s,\omega_\tau)xds;
%		\end{equation*}
%		for each $x\in X$, $t\geq 0$, and $\omega_\tau\in \mathbb{R}\times \Omega$.
%		\par Then 
%		\begin{equation*}
%		\|\varphi(t,\omega_\tau)x\|_X\leq M_0\|x\|_X+\int_{0}^{t}M_2
%		\|\varphi(s,\omega_\tau)x\|_Xds,
%		\end{equation*}
%		where
%		\begin{equation*}
%		M_0=\sup_{0\leq t\leq  1}\|T(t)\|_{\mathcal{L}(X)},  \ \ 
%		M_2(\omega_\tau):=\delta M_0M(\omega_\tau).
%		\end{equation*}
%		From Grownwall`s inequality we obtain
%		$\|\varphi(t,\omega_\tau)\|_{\mathcal{L}(X)}\leq M_0e^{M_2(\omega_\tau)}=L_1(\omega_\tau)$, for all $1\geq t\geq 0$.
%		\par Finally, we have that for $0\leq t\leq 1$
%		\begin{eqnarray}
%		\|\varphi(t,\omega_\tau)x-T(t)x\|_X &\leq&
%		\int_{0}^t\|T(t-s)\|_{\mathcal{L}(X)} \, 
%		\|B(\Theta_s \omega_\tau)\|_{\mathcal{L}(X)}\,
%		\|\varphi(s,\omega_\tau)x\|_X ds\\
%		&\leq&  
%		\int_{0}^t M_0 \, \delta M(\omega_\tau)\,
%		L_1(\omega_\tau)ds\\
%		&=&
%		M_0\delta M(\omega_\tau)L_1(\omega_\tau),
%		\end{eqnarray}
%		From Theorem \ref{th-robustness-uniform-ED-co-cycle}, choose $\delta >0$ small enough such that
%		$(\varphi,\Theta)$ admits an exponential dichotomy.		
%		
%	\end{proof}
\subsection{Existence and continuity of random hyperbolic solutions}
\label{subsection-randomperturbation-of-autonomous-semilinear-problem}
\quad
\par Now we study a semilinear problem under a nonautonomous random perturbation. We provide
conditions to obtain existence of a bounded random hyperbolic solution for a co-cycle.  
We consider the semilinear problem on a Banach space $X$
\begin{equation}\label{eq-autonomous-semilinear-ODE}
\dot{y}=\mathcal{B}y+f_0(y), \ \ y(0)=y_0
\end{equation}
and a nonautonomous random perturbation of it
\begin{equation}\label{eq-nonautonomous-semilinear-randomODE}
\dot{y}=\mathcal{B}y+f_\eta(\Theta_t\omega_\tau,y), \ \ y(0)=y_0,
\end{equation}
where $(\theta,\Omega)$ is a random flow, and $\{\Theta_t: t\in \mathbb{R}\}$ is a driving flow given by 
$\Theta_t(\omega_\tau)=(t+\tau,\theta_t\omega)$ for every $\omega_\tau\in \mathbb{R}\times \Omega$, and $\eta\in (0,1]$ is a real parameter.
\par We assume that problem \eqref{eq-autonomous-semilinear-ODE} generates a (nonlinear) semigroup $\{T(t): t\geq 0 \}$, and that \eqref{eq-nonautonomous-semilinear-randomODE} generates a (nonlinear) nonautonomous random dynamical system $(\psi_\eta,\Theta)$, for each $\eta\in [0,1]$. 
\par Our goal is to provide conditions on the limit ``$f_\eta\to f_0$'', as $\eta\to 0$, such that, if $\{T(t): t\geq 0 \}$ has a hyperbolic equilibrium $y_0^*$, then there exists a (unique) \textit{random hyperbolic equilibrium} for $(\psi_\eta,\Theta)$ near $y_0^*$, for $\eta>0$ ``small enough''. We first prove existence and continuity of global solutions for \eqref{eq-nonautonomous-semilinear-randomODE} and then show that these solutions exhibit a hyperbolic behavior. 
%\par To start, we define \textit{global solutions} for a 
%nonautonomous random dynamical system $(\psi,\Theta)$ generated by a nonautonomous random differential equation
%\begin{equation}
%\dot{x}=Bx+f(\Theta_t\omega_\tau, x), \ t\geq 0, x(0)=x_0\in X.
%\end{equation}
\begin{definition}
	Let $(\psi,\Theta)$ be a nonautonomous random dynamical system. We say that a map $\zeta:\mathbb{R}\times\Omega\to X$ is a global solution for $(\psi,\Theta)$
	if
	\begin{equation*}
	\psi(t,\omega_\tau)\zeta(\omega_\tau)=\zeta(\Theta_t\omega_\tau), \hbox{ for every }
	t\geq 0.
	\end{equation*}
\end{definition}
\begin{remark}
	For random dynamical systems some authors called these global solutions of \textit{equilibria}. In our work, we will only refer as equilibrium to these kind of global solutions when it exhibits a hyperbolic behavior, in this situation we will call
	\textit{hyperbolic equilibria} or \textit{hyperbolic solutions}.
\end{remark}
\begin{remark}
	Let $(\psi,\Theta)$ be a nonautonomous random dynamical system and a global solution $\zeta$. Then, for each $\omega_\tau$ fixed, the mapping 
	$\mathbb{R}\ni t\mapsto \xi(t,\omega_\tau):=\zeta(\Theta_t\omega_\tau)$
	defines a global solution for the evolution process 
	\begin{equation*}
	\{\psi(t-s,\Theta_s\omega_\tau):t\geq s \}.
	\end{equation*}
\end{remark} 
\par Suppose that $y_0^*\in X$ is a \textbf{hyperbolic equilibrium} for 
\eqref{eq-autonomous-semilinear-ODE}, i.e., the linear operator
$\mathcal{A}:=\mathcal{B}+f^\prime_0(y_0^*)$ generates a $C_0$-semigroup 
$\{e^t:t\geq 0\}$ that admits an exponential dichotomy, see for instance Henry \cite{Henry-1}. 
Let $U$ be an open neighborhood of $y_0^*$ in $X$ such that,
$f_\eta(\omega_\tau,\cdot)\in C^1(U,X)$, for all $\eta\in [0,1]$ and $\omega_\tau\in \mathbb{R}\times \Omega$.
\par Define
\begin{equation*}
\lambda(\eta,\omega_\tau):=\sup_{(t,x)\in\mathbb{R}\times U}\Big\{\|f_\eta(\Theta_t(\omega_\tau),x)-f_0(x)\|_X+
\|(f_\eta)_x(\Theta_t(\omega_\tau),x)-f_0^\prime(x)\|_{\mathcal{L}(X)}\Big\},
\end{equation*}
and we assume that
\begin{equation}\label{eq-fundamental-hypothesis-sthocastic-perturbation}
\lim_{\eta\to 0}\sup_{t\in \mathbb{R}}\lambda(\eta,\Theta_t\omega_\tau)=0.
\end{equation}
Also suppose
\begin{equation}\label{eq-fundamental-hypothesis-contisouly-diferentiability}
\rho(\epsilon):=\sup_{x\in\times U}\sup_{\|h\|\leq \epsilon}
\bigg\{\frac{\|f_0(x+h)-f_0(x)-f_0^\prime(x)h\|_X}{\|h\|_X}\bigg\}\to 0,
\hbox{ as } \epsilon\to 0.
\end{equation}
\begin{theorem}\label{th-existence-random-hyperbolic-solutions}
	Let $y_0^*$ be a hyperbolic equilibrium for \eqref{eq-autonomous-semilinear-ODE} and assume that \eqref{eq-fundamental-hypothesis-sthocastic-perturbation}
	and 
	\eqref{eq-fundamental-hypothesis-contisouly-diferentiability} hold. 
	Given $\epsilon>0$ small enough, there exists a $\Theta$-invariant map
	$\eta_\epsilon:\mathbb{R}\times\Omega\to (0,1]$ such that
	there exists 
	\begin{equation*}
	\mathbb{R}\ni t\mapsto \xi_\eta^*(t,\omega_\tau)\in X, \hbox{ for every } \eta\in (0,\eta_\epsilon(\omega_\tau)],
	\end{equation*}
	a global solution of $\{\psi_\eta(t-s,\Theta_s\omega_\tau):t\geq s \}$ 
	such that
	\begin{equation*}
	\sup_{t\in \mathbb{R}} \|\xi_{\eta}^*(t,\omega_\tau)-y_0^*\|_X<\epsilon, \hbox{ for every } \eta\in (0,\eta_\epsilon(\omega_\tau)].
	\end{equation*}
\end{theorem}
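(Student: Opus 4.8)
The plan is to translate the hyperbolic equilibrium to the origin and reduce the existence of $\xi_\eta^*$ to a Lyapunov--Perron fixed point problem controlled by the exponential dichotomy of the linearization $\mathcal{A}=\mathcal{B}+f_0'(y_0^*)$.

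First I would set $v=y-y_0^*$. Using $\mathcal{B}y_0^*+f_0(y_0^*)=0$ and adding and subtracting $f_0'(y_0^*)v$, problem \eqref{eq-nonautonomous-semilinear-randomODE} becomes $\dot v=\mathcal{A}v+g_\eta(\Theta_t\omega_\tau,v)$, where $g_\eta(\Theta_t\omega_\tau,v):=f_\eta(\Theta_t\omega_\tau,y_0^*+v)-f_0(y_0^*)-f_0'(y_0^*)v$. Writing $g_\eta$ as the sum of $f_\eta(\Theta_t\omega_\tau,y_0^*+v)-f_0(y_0^*+v)$ and $f_0(y_0^*+v)-f_0(y_0^*)-f_0'(y_0^*)v$, the definitions of $\lambda$ and $\rho$ give, for $\|v\|\le\epsilon$ with $y_0^*+v\in U$, the bound $\|g_\eta(\Theta_s\omega_\tau,v)\|\le\lambda(\eta,\omega_\tau)+\rho(\epsilon)\epsilon$; the same decomposition bounds the Lipschitz constant of $g_\eta(\Theta_s\omega_\tau,\cdot)$ on $\{\|v\|\le\epsilon\}$ by $\lambda(\eta,\omega_\tau)+\tilde\rho(\epsilon)$, where $\tilde\rho(\epsilon):=\sup_{\|v\|\le\epsilon}\|f_0'(y_0^*+v)-f_0'(y_0^*)\|_{\mathcal{L}(X)}\to0$ since $f_0\in C^1(U,X)$. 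I note that, because the supremum defining $\lambda(\eta,\cdot)$ already runs over all $t\in\mathbb{R}$, the map $\lambda(\eta,\cdot)$ is $\Theta$-invariant, so that $\sup_{t}\lambda(\eta,\Theta_t\omega_\tau)=\lambda(\eta,\omega_\tau)\to0$ as $\eta\to0$ by \eqref{eq-fundamental-hypothesis-sthocastic-perturbation}.

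Since $y_0^*$ is hyperbolic, $\{e^{\mathcal{A}t}:t\ge0\}$ admits an exponential dichotomy with projection $\Pi^s$, deterministic constants $M\ge1$, $\beta>0$, and Green function $G(t,s)$ satisfying $\int_{\mathbb{R}}\|G(t,s)\|_{\mathcal{L}(X)}\,ds\le 2M/\beta$ for every $t$. On the closed ball $B_\epsilon$ of radius $\epsilon$ in $C_b(\mathbb{R},X)$ I define $(\mathcal{T}_\eta\zeta)(t):=\int_{\mathbb{R}}G(t,s)\,g_\eta(\Theta_s\omega_\tau,\zeta(s))\,ds$, whose bounded fixed points are exactly the bounded global (mild) solutions of the translated equation, this being the continuous nonlinear analogue of the admissibility correspondence behind Theorem \ref{th-admissibility-for-pertubed-process}. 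The self-mapping and contraction estimates then read $\|\mathcal{T}_\eta\zeta\|_\infty\le(2M/\beta)[\lambda(\eta,\omega_\tau)+\rho(\epsilon)\epsilon]$ and $\|\mathcal{T}_\eta\zeta_1-\mathcal{T}_\eta\zeta_2\|_\infty\le(2M/\beta)[\lambda(\eta,\omega_\tau)+\tilde\rho(\epsilon)]\|\zeta_1-\zeta_2\|_\infty$. I would first fix $\epsilon$ small enough that $B_\epsilon(y_0^*)\subset U$ and $(2M/\beta)(\rho(\epsilon)\epsilon+\tilde\rho(\epsilon))$ is small, and then define $\eta_\epsilon(\omega_\tau)$ as the threshold below which $\lambda(\eta,\omega_\tau)$ is small enough that $\mathcal{T}_\eta$ maps $B_{\epsilon/2}$ into itself and is a $\tfrac{1}{2}$-contraction there. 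Banach's fixed point theorem yields a unique $\zeta^*$ with $\|\zeta^*\|_\infty\le\epsilon/2<\epsilon$, and $\xi_\eta^*(t,\omega_\tau):=y_0^*+\zeta^*(t)$ is the claimed global solution of $\{\psi_\eta(t-s,\Theta_s\omega_\tau):t\ge s\}$ with $\sup_t\|\xi_\eta^*(t,\omega_\tau)-y_0^*\|<\epsilon$.

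Finally, the $\Theta$-invariance of $\eta_\epsilon$ follows because the thresholds depend only on the $\Theta$-invariant quantities $\lambda(\eta,\cdot)$, $M$ and $\beta$; and measurability of $\omega\mapsto\xi_\eta^*$ is obtained from $\zeta^*=\lim_n\mathcal{T}_\eta^n 0$, since each iterate is measurable in $\omega$, exactly as in the measurability argument closing the proof of Theorem \ref{th-discrete-robstness-random-perturbations-of-autonomous-nonuniform-case}. The main obstacle I expect is the rigorous identification of the bounded fixed point of $\mathcal{T}_\eta$ with a genuine bounded global solution of the nonautonomous random process generated by \eqref{eq-nonautonomous-semilinear-randomODE} (the continuous nonlinear admissibility), together with checking that all smallness thresholds, and the resulting solution, can be chosen simultaneously $\Theta$-invariantly and measurably in $\omega$.
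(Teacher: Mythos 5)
Your proposal is correct and follows essentially the same route as the paper: translate $y_0^*$ to the origin, write the bounded complete solutions as fixed points of the Lyapunov--Perron operator $\phi\mapsto\int_{\mathbb{R}}G_{\mathcal{A}}(t,s)g_\eta(\Theta_s\omega_\tau,\phi(s))\,ds$ built from the dichotomy of $\{e^{\mathcal{A}t}\}$, and run a contraction argument on the $\epsilon$-ball of bounded functions using exactly the bounds $\lambda(\eta,\omega_\tau)+\rho(\epsilon)\epsilon$ for the self-map and $\lambda(\eta,\omega_\tau)$ plus the modulus of continuity of $f_0'$ for the Lipschitz constant, with $\eta_\epsilon$ made $\Theta$-invariant via the $\Theta$-invariance of $\lambda(\eta,\cdot)$. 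Your bookkeeping of the constants (the $\epsilon/2$ ball and the explicit $\tilde\rho$) and the closing measurability remark, which the paper only supplies later in the stochastic setting, are cosmetic differences only.
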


\begin{proof}
	Let $y$ be a global solution of  
	\eqref{eq-nonautonomous-semilinear-randomODE}. 
	Then, if we define
	$\phi=y-y_0^*$, it satisfies 
	\begin{equation*}
	\dot{\phi}=\mathcal{A}\phi+g_\eta(\Theta_t\omega_\tau,\phi),
	\end{equation*}
	where $g_\eta(\Theta_t\omega_\tau,\phi)=f_\eta(\Theta_t\omega_\tau,y_0^*+\phi)-f_0(y_0^*)-
	f_0^\prime(y_0^*)\phi$, so that
	\begin{equation}\label{eq-existence_hyperbolic-solution-FVC}
	\phi(t)=e^{\mathcal{A}(t-\tau)}\phi(\tau)+
	\int_{\tau}^{t}e^{\mathcal{A}(t-s)}g_\eta(\Theta_s\omega_\tau,\phi(s))ds
	\end{equation}
	Hence, if we project $Q$ and $I-Q$ and take limits we obtain
	\begin{equation*}
	\phi(t)=
	\int_{-\infty}^{+\infty}G_\mathcal{A}(t,s)g_\eta(\Theta_s\omega_\tau,\phi(s))ds,
	\end{equation*}
	where $G$ is the Green function associated with the semigroup $\{e^{\mathcal{A}t}:t\geq 0 \}$
	and projection $Q$.
	\par Consequently, a complete bounded solution to 
	\eqref{eq-existence_hyperbolic-solution-FVC} exists in a small neighborhood of
	$x=0$, if and only if, the operator
	\begin{equation*}
	\mathcal{I}_{\omega_\tau,\eta}(\phi)(t)=
	\int_{-\infty}^{+\infty}G_\mathcal{A}(t,s)g_\eta(\Theta_s\omega_\tau,\phi(s))ds
	\end{equation*}
	has a unique fixed point in the set
	\begin{equation*}
	\mathfrak{X}_\epsilon:=\Big\{\phi:\mathbb{R}\rightarrow X:\sup_{t\in \mathbb{R}} \|\phi(t)\|_X\leq \epsilon \Big\}
	\end{equation*}
	for a given $\epsilon>0$ small. This follows by a fixed point argument for
	$\mathcal{I}_{\omega_\tau,\eta}$ for each $\omega\in \Omega$ fixed.
	\par Indeed, 
	let $\epsilon_1>0$ such that
	\begin{equation}
	\|f_0^\prime(y_0^*+h)-f_0^\prime(y_0^*)\|_{\mathcal{L}(X)}<\frac{1}{6 M\beta^{-1}},
	\hbox{ for every } \|h\|_X<\epsilon_1,
	\end{equation}
	and $\epsilon_2\in (0,1/2)$ be such that $\rho_0(\epsilon)<1/6M\beta^{-1}$, for every $0<\epsilon<\epsilon_2$, where $M>1$ is the bound, and $\beta>0$ is the exponent of the exponential dichotomy of $\{e^{\mathcal{A}t}: t\geq 0\}$.
	Define $\epsilon_0=\min{\epsilon_1,\epsilon_2/2}$ and 
	for a given $\omega_\tau\in \mathbb{R}\times \Omega$ fixed
	and 
	$\epsilon\in (0,\epsilon_0)$, 
	define $\eta_\epsilon(\omega_\tau)>0$ such that
	\begin{equation*}
	\lambda(\eta,\omega_\tau)<\frac{\epsilon}{6M\beta^{-1}}, \hbox{ for every } \eta\in (0,\eta_\epsilon(\omega_\tau)].
	\end{equation*}
	Then, it is possible to prove that $\mathcal{I}_{\omega_\tau,\eta}$ maps 
	$\mathfrak{X}_\epsilon$ into itself. 
	In fact, for $\phi \in \mathfrak{X}_\epsilon$
	\begin{eqnarray*}
		\|g_\eta(\Theta_t\omega_\tau,\phi(t))\|_X&\leq& \|f_\eta(\Theta_t\omega_\tau,y_0^*+\phi(t))-f_0(y_0^*+\phi(t))\|_X+
		\rho(\epsilon)\epsilon\\
		&\leq& \lambda(\eta,\omega_\tau)+\rho_0(\epsilon)\epsilon,
	\end{eqnarray*}
	hence
	\begin{equation*}
	\|\mathcal{I}_{\eta,\omega}\phi(t)\|_X\leq 2\beta^{-1}M \|g_\eta(\Theta_t\omega_\tau,\phi(t))\|_X<\epsilon,
	\end{equation*}
	and that $\mathcal{I}_{\omega_\tau,\eta}$ is a contraction.
	Let $\phi_1,\phi_2 \in \mathfrak{X}_\epsilon$
	\begin{eqnarray*}
		& &\|g_\eta(\Theta_t\omega_\tau, \phi_1(t))- g_\eta(\Theta_t\omega_\tau, \phi_2(t))\|_X\\
		& &\leq \|f_\eta(\Theta_t\omega_\tau, y_0^*+\phi_1(t))-f_\eta(\Theta_t\omega_\tau, y_0^*+\phi_2(t))-f_0^\prime(y_0^*)(\phi_1(t)-\phi_2(t))\|_X\\
		& &\leq \bigg[\lambda(\eta,\omega_\tau)+\rho(\epsilon)+\|f_0^\prime(y_0^*+\phi_1)-f_0^\prime(y_0^*)\|_{\mathcal{L}(X)}\bigg]\,\|(\phi_1(t)-\phi_2(t))\|_X.
	\end{eqnarray*}
	%		\begin{eqnarray}
	%		&\|g_\eta(\Theta_t\omega_\tau, \phi_1(t))- g_\eta(\Theta_t\omega_\tau, \phi_2(t))\|_X
	%		\leq \|[(f_\eta)_x(\Theta_t\omega_\tau,y_0^*)-f_0^\prime(y_0^*)](\phi_2(t)-\phi_1(t))\|_X\\
	%		&+\|f_\eta(\Theta_t\omega_\tau, y_0^*+\phi_1(t))-f_\eta(\Theta_t\omega_\tau, y_0^*+\phi_2(t))+
	%		(f_\eta)_x(\Theta_t\omega_\tau,y_0^*)(\phi_2(t)-\phi_1(t))\|_X\\
	%		&\leq \big[\lambda(\eta,\omega_\tau)+ \|(f_\eta)_x(\Theta_t\omega_\tau,y_0^*+\phi_1(t))-(f_\eta)_x(\Theta_t\omega_\tau,y_0^*)\|\big] \|\phi_1(t)-\phi_2(t)\|_X\\
	%		&+\|f_\eta(\Theta_t\omega_\tau, y_0^*+\phi_1(t))-f_\eta(\Theta_t\omega_\tau, y_0^*+\phi_2(t))+
	%		(f_\eta)_x(\Theta_t\omega_\tau,y_0^*+\phi_1(t))(\phi_2(t)-\phi_1(t))\|_X\\
	%		&\leq \big[3\lambda(\eta,\omega_\tau) + \|f_0^\prime(y_0^*)-f_0^\prime(y_0^*+\phi_1(t))\|_X+\rho_\eta(\epsilon,\omega_\tau,y_0^*+\phi_1(t))\big]\|\phi_1(t)-\phi_2(t)\|_X\\
	%		&\leq \frac{1}{4M\beta^{-1}}\|\phi_1(t)-\phi_2(t)\|_X
	%		\end{eqnarray}
	Then
	\begin{equation*}
	\|\mathcal{I}_{\eta,\omega}\phi_1(t)-\mathcal{I}_{\eta,\omega}\phi_1(t)\|_X\leq \frac{1}{2}\,\|\phi_1(t)-\phi_2(t)\|_X.
	\end{equation*}
	Therefore, there exists a fixed point $\phi_\eta^*(\cdot,\omega_\tau)$ of $\mathcal{I}_{\omega_{\tau},\eta}$ in 
	$\mathfrak{X}_\epsilon$, and the global solution of \eqref{eq-nonautonomous-semilinear-randomODE} is given by
	$\xi_\eta^*(\cdot,\omega_{\tau})=\phi_\eta^*(\cdot,\omega_\tau)+y_0^*$.
\end{proof}
%\begin{remark}
%	In Bortolan \textit{et al.} \cite{Bortolan-Cardoso-etal} they work with Lipschitz norms, instead of $C^1$-norms, and Theorem \ref{th-existence-random-hyperbolic-solutions} still holds with this weaker condition. 
%\end{remark}
\par  As in the deterministic case, see Carvalho and Langa, \cite{Carvalho-Langa-2}, these solutions 
$\{\xi_\eta^*\}$ play the role of an 
\textit{hyperbolic equilibrium} for \eqref{eq-nonautonomous-semilinear-randomODE}.  
Given $\epsilon>0$ define, for each $\omega_\tau$ fixed and $\eta\in (0,\eta_\epsilon(\omega_\tau)]$,
\begin{equation*}
\zeta^*_\eta(\tau,\omega):=\xi^*_\eta(0,\omega_\tau).
\end{equation*}
Note that, for each $\omega_\tau$ fixed, there exists $\eta_\epsilon(\omega_\tau)>0$ such that the mapping $\mathbb{R}\ni t\mapsto \xi_\eta^*(t,\omega_\tau):=\zeta^*_\eta(\Theta_t\omega_\tau)$, $t\in \mathbb{R}$ is a complete solution for
\begin{equation}\label{eq-nonautonomous-semilinear-randomODE-translated}
\dot{x}=\mathcal{B}x+f_\eta(\Theta_t\omega_\tau,x), \ \eta\in(0,\eta_\epsilon(\omega_\tau)].
\end{equation}
Then, to ensure that $\xi^*_\eta$ exhibits a hyperbolic behavior, we linearized problem \eqref{eq-nonautonomous-semilinear-randomODE-translated} over $\zeta^*_\eta$ and guarantee that the associated linear nonautonomous random dynamical system admits an exponential dichotomy. 

\begin{remark}\label{th-existence-random-hyperbolic-solutions-proof}
	Let $\omega_\tau\in \mathbb{R}\times \Omega$ be fixed, $x_\eta(\cdot,\omega_\tau)$ a solution of \eqref{eq-nonautonomous-semilinear-randomODE} and define  $z_\eta(t)=x_\eta(t,\omega)-\zeta_\eta^*(\Theta_t\omega_\tau)$, for each $t\geq 0$ and 
	$\eta\in (0,\eta_\epsilon(\omega_{\tau})]$. Then
	\begin{equation}\label{eq-semilinear-linearization-co-cycle}
	\dot{z}=\mathcal{A}z+B_\eta(\Theta_t\omega_\tau)z+h_\eta(\Theta_t\omega_\tau,z),
	\end{equation}
	where $B_\eta(\Theta_t\omega_\tau)=
	(f_\eta)_z(\Theta_t\omega_\tau,\zeta_\eta^*(\Theta_t\omega_\tau))-f_0^\prime(y_0^*)$,
	and \begin{equation*}
	h_\eta(\Theta_t\omega_\tau,z):=f_\eta(\Theta_t\omega_\tau,\zeta_\eta^*(\Theta_t\omega_\tau)+z)-f_\eta(\Theta_t\omega_\tau,\zeta_\eta^*(\Theta_t\omega_\tau))- (f_\eta)_z(\Theta_t\omega_\tau,\zeta_\eta^*(\Theta_t\omega_\tau))z.
	\end{equation*} 
	Thus, $0$ is a globally defined bounded solution for \eqref{eq-semilinear-linearization-co-cycle} and
	$h_\eta(\Theta_t\omega_\tau,0)=0$, $(h_\eta)_z(\Theta_t\omega_\tau,0)=0\in \mathcal{L}(X)$.
	\par We consider the linearization of problem \eqref{eq-nonautonomous-semilinear-randomODE} about $\xi^*_\eta$
	\begin{eqnarray}\label{eq-semilinear-linearization-co-cycle-2}
	\dot{z}&=&\mathcal{B}+(f_\eta)_z(\Theta_t\omega_\tau,\zeta_\eta^*(\Theta_t\omega_\tau))z\\
	&=&\mathcal{A}z+B_\eta(\Theta_t\omega_\tau)z.%, \ t\geq 0,  \ z(0)=z_0\in X,
	\end{eqnarray}
	Note that for each $\omega_\tau\in \mathbb{R}\times \Omega$ fixed
	\begin{equation*}
	\lim_{\eta\to 0}\sup_{t\in\mathbb{R}}\|B_\eta(\Theta_t\omega_\tau)\|_{\mathcal{L}(X)}=0.
	\end{equation*}
	Then we choose $\tilde{\eta}_\epsilon\leq \eta_\epsilon$ $\Theta$-invariant and consider a linear co-cycle 
	$\varphi_\epsilon(t,\omega_{\tau})=\varphi_{\tilde{\eta}_\epsilon(\omega_\tau)}(t,\omega_\tau)$ satisfying
	\begin{equation*}
	\varphi_\epsilon(t,\tau,\omega)x_0=e^{\mathcal{A}t}x_0+
	\int_{0}^te^{\mathcal{A}(t-s)}B_{\tilde{\eta}_\epsilon(\omega_{\tau})}(\Theta_s\omega_\tau)
	\varphi_\epsilon(s,\tau,\omega)x_0ds.
	\end{equation*}	
%	Then problem
%	\eqref{eq-semilinear-linearization-co-cycle-2} generates a co-cycle 
%	$(\varphi_\eta,\Theta)$ which satisfies
%	\begin{equation*}
%	\varphi_\eta(t,\tau,\omega)x_0=e^{\mathcal{A}t}x_0+
%	\int_{0}^te^{\mathcal{A}(t-s)}B_\eta(\Theta_s\omega_\tau)
%	\varphi_\eta(s,\tau,\omega)x_0ds.
%	\end{equation*}
	Since $\{e^{\mathcal{A}t}: t\geq 0\}$ admits an exponential dichotomy, we apply Theorem \ref{th-robustness-nonautonomous-random-perturbation-of-nonautonomous} to guarantee that the linear co-cycle
	$(\varphi_\epsilon,\Theta)$ admits an exponential dichotomy for each suitable small $\epsilon>0$. Thus $\zeta^*_\epsilon:=\zeta^*_{\tilde{\eta}_\epsilon}$ is a global solution that exhibits a \textbf{hyperbolic behavior}.
\end{remark}
%
%\par Let $\tilde{\Omega}$ be a full measure subset of $\Omega$ such that 
%for every $\omega\in \tilde{\Omega}$ there exists a global solution 
%$\xi(\cdot,\omega): \mathbb{R}\to X$ of 
%\begin{equation}\label{eq-definition-random-hyperbolic-solution}
%\dot{x}=Bx+f(t,\theta_t\omega, x), \ t\geq \tau, x(\tau)=x_0\in X.
%\end{equation}
%
%We assume without lost of generality for each $t\in \mathbb{R}$ that $\zeta(t,\omega)$ is defined for all $\omega\in \Omega$. 
%We now define ready to define when a global solution $\zeta$ exhibits hyperbolic behavior, at this case we call these particular solutions as \textit{random hyperbolic solutions} for a nonautonomous random dynamical systems. 
\par  The discussion above suggests a proper definition of \textit{random hyperbolic solution} for
a nonautonomous random differential equation.
\begin{definition}
	Let $B$ be a generator of a strongly continuous semigroup, and
	$f:\mathbb{R}\times\Omega\times X\to X$ such that
	for each $(t,\omega)$ fixed 
	$X\ni x\mapsto f(t,\omega,x) $ is differentiable, and $\zeta:\mathbb{R}\times \Omega\to X$ be a bounded global solution of
	\begin{equation}\label{eq-definition-random-hyperbolic-solution-correspondent}
	\dot{x}=Bx+f(\Theta_t\omega_\tau, x), \ t\geq 0, x(0)=x_0\in X.
	\end{equation}
	We say that $\zeta$ is a \textbf{random hyperbolic solution} of 
	\eqref{eq-definition-random-hyperbolic-solution-correspondent}
	if there exists a linear nonautonomous random dynamical system
	$(\varphi,\Theta)$ satisfying
	\begin{equation*}
	\varphi(t,\omega_\tau)=e^{Bt}+
	\int_{0}^te^{B(t-s)}D_xf(\Theta_s\omega_\tau,\zeta(\Theta_s\omega_\tau))
	\varphi(s,\omega_\tau)ds, \hbox{ for all } \omega_\tau \in 
	\mathbb{R}\times \Omega,
	\end{equation*}
	and $(\varphi,\Theta)$ admits an exponential dichotomy.
\end{definition}

\par Joining the results of Theorem \ref{th-existence-random-hyperbolic-solutions} and
Remark \ref{th-existence-random-hyperbolic-solutions-proof} we obtain:
\begin{theorem}[Existence and continuity of random hyperbolic solutions]\label{th-existence-random-hyperbolic-solutions-complete}
	Let $y_0^*$ be a hyperbolic equilibrium for \eqref{eq-autonomous-semilinear-ODE} and assume that \eqref{eq-fundamental-hypothesis-sthocastic-perturbation} and 
	\eqref{eq-fundamental-hypothesis-contisouly-diferentiability} hold. 
	Given $\epsilon>0$ small enough, there exists a $\Theta$-invariant map
	$\eta_\epsilon:\mathbb{R}\times\Omega\to (0,1]$ and a global solution $\zeta_{\epsilon}^*:\mathbb{R}\times \Omega\to X$ of 
	$(\psi_{\eta_\epsilon},\Theta)$ defined by
	\begin{equation*}
	\zeta_{\epsilon}^*(\omega_{\tau}):=\zeta_{\eta_\epsilon(\omega_\tau)}^*(\omega_\tau)	\end{equation*} 
	such that $\zeta_{\epsilon}^*$ is a random hyperbolic solution of $(\psi_{\eta_\epsilon},\Theta)$, with 
	\begin{equation*}
	\sup_{t\in \mathbb{R}}\|\zeta_{\epsilon}^*(\Theta_t\omega_\tau)-y_0^*\|_X<\epsilon.
	\end{equation*}
\end{theorem}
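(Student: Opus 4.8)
The plan is to assemble the statement from the two pieces already in hand: the existence result Theorem \ref{th-existence-random-hyperbolic-solutions} furnishes a global solution staying in an $\epsilon$-neighborhood of $y_0^*$, and Remark \ref{th-existence-random-hyperbolic-solutions-proof} identifies the linearization along this solution as a uniformly small bounded perturbation of the hyperbolic semigroup $\{e^{\mathcal{A}t}:t\ge 0\}$, to which the linear robustness Theorem \ref{th-robustness-nonautonomous-random-perturbation-of-nonautonomous} applies. No new analysis is needed beyond carefully combining these.

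First I would fix $\epsilon>0$ small and invoke Theorem \ref{th-existence-random-hyperbolic-solutions} to obtain a $\Theta$-invariant map $\eta_\epsilon$ and, for each $\omega_\tau$ and each $\eta\in(0,\eta_\epsilon(\omega_\tau)]$, the global solution $\xi_\eta^*(\cdot,\omega_\tau)$ of the evolution process $\{\psi_\eta(t-s,\Theta_s\omega_\tau):t\ge s\}$ with $\sup_{t\in\mathbb{R}}\|\xi_\eta^*(t,\omega_\tau)-y_0^*\|_X<\epsilon$. Setting $\zeta_\eta^*(\omega_\tau):=\xi_\eta^*(0,\omega_\tau)$ yields a global solution of the co-cycle $(\psi_\eta,\Theta)$, and the cocycle/evolution-process correspondence of Remark \ref{remark-associated-processes} gives $\xi_\eta^*(t,\omega_\tau)=\zeta_\eta^*(\Theta_t\omega_\tau)$; in particular the claimed bound $\sup_{t\in\mathbb{R}}\|\zeta_\epsilon^*(\Theta_t\omega_\tau)-y_0^*\|_X<\epsilon$ is already recorded.

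Next I would verify that $\zeta_\eta^*$ is in fact a random hyperbolic solution. Following Remark \ref{th-existence-random-hyperbolic-solutions-proof}, the linearization of \eqref{eq-nonautonomous-semilinear-randomODE-translated} about $\zeta_\eta^*$ reads $\dot z=\mathcal{A}z+B_\eta(\Theta_t\omega_\tau)z$, with $B_\eta(\Theta_t\omega_\tau)=(f_\eta)_z(\Theta_t\omega_\tau,\zeta_\eta^*(\Theta_t\omega_\tau))-f_0'(y_0^*)$; here the splitting $\mathcal{A}=\mathcal{B}+f_0'(y_0^*)$ makes this identical to $\mathcal{B}+D_xf_\eta$ in the definition of random hyperbolic solution, so the exponential dichotomy of the resulting co-cycle is exactly what must be shown. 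Using hypothesis \eqref{eq-fundamental-hypothesis-sthocastic-perturbation} together with the $\epsilon$-closeness of $\zeta_\eta^*$ to $y_0^*$ and the continuity of $f_0'$, I obtain $\lim_{\eta\to0}\sup_{t\in\mathbb{R}}\|B_\eta(\Theta_t\omega_\tau)\|_{\mathcal{L}(X)}=0$, whence $\sup_{0\le t\le1}\|\int_0^tB_\eta(\Theta_s\omega_\tau)x\,ds\|_X\le\sup_{s}\|B_\eta(\Theta_s\omega_\tau)\|_{\mathcal{L}(X)}\,\|x\|_X$ can be driven below the threshold $\epsilon(\omega_\tau)$ of Theorem \ref{th-robustness-nonautonomous-random-perturbation-of-nonautonomous}. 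Choosing a $\Theta$-invariant $\tilde\eta_\epsilon\le\eta_\epsilon$ realizing this smallness and setting $\zeta_\epsilon^*:=\zeta_{\tilde\eta_\epsilon}^*$, the hyperbolicity of $y_0^*$ (i.e.\ the exponential dichotomy of $\{e^{\mathcal{A}t}\}$) lets Theorem \ref{th-robustness-nonautonomous-random-perturbation-of-nonautonomous} conclude that the linear co-cycle $\varphi_\epsilon$ solving \eqref{eq-existence-nonautonomous-random-perturbation} with $B=B_{\tilde\eta_\epsilon}$ admits an exponential dichotomy, so $\zeta_\epsilon^*$ meets the definition.

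The Gr\"onwall estimates and the contraction argument are inherited verbatim from Theorem \ref{th-existence-random-hyperbolic-solutions} and need not be redone. The step I expect to demand the most care is the $\Theta$-invariance and measurability bookkeeping: one must check that the refined threshold $\tilde\eta_\epsilon$ is genuinely $\Theta$-invariant and measurable in $\omega$, and that the smallness of $B_\eta$ is uniform in $t$ along the whole orbit $\Theta_t\omega_\tau$. This is precisely the point where the \emph{boundedness} of the noise (rather than merely tempered growth) enters, since it is what keeps $\sup_{t\in\mathbb{R}}\|B_\eta(\Theta_t\omega_\tau)\|_{\mathcal{L}(X)}$ finite and forces the linearized perturbation into the admissible class required by Theorem \ref{th-robustness-nonautonomous-random-perturbation-of-nonautonomous}.
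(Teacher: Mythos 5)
Your proposal is correct and follows essentially the same route as the paper, which itself obtains this theorem by ``joining'' Theorem \ref{th-existence-random-hyperbolic-solutions} (existence of the bounded global solution $\epsilon$-close to $y_0^*$) with Remark \ref{th-existence-random-hyperbolic-solutions-proof} (smallness of $B_\eta$ along the solution, then Theorem \ref{th-robustness-nonautonomous-random-perturbation-of-nonautonomous} for the exponential dichotomy of the linearization, with a $\Theta$-invariant choice $\tilde\eta_\epsilon\leq\eta_\epsilon$). Your added attention to the uniform-in-$t$ smallness and the $\Theta$-invariance bookkeeping matches exactly the points the paper relies on.
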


\begin{remark}
	Theorem \ref{th-existence-random-hyperbolic-solutions-complete} provides existence and continuity of \textit{random hyperbolic solutions} from nonautonomous random perturbations of a hyperbolic problem. However, this result of persistence can be proved in a general context. In other words, following similar steps, it is possible to prove that \textit{random hyperbolic solutions} are stable under (random nonautonomous) perturbations.
\end{remark}

\begin{remark}
	In the parabolic case, when $-\mathcal{A}$ is sectorial, with $\mathcal{A}\in \mathcal{L}(X^\delta,X)$, $0<\delta<1$, where $X^\delta$ is a fractional power of $X$, we cannot assume that the nonlinearity $f_0:U\subset X\to X$ is differentiable, see \cite{Bortolan-Cardoso-etal}. 
	We have to assume that 
	the hyperbolic equilibrium $y_0^*$ is in $X^\delta$ and that $U$ is a open neighborhood of 
	$y_0^*$ in $X^\delta$ such that 
	$f_0:U\subset X^\delta\to X$ is differentiable with derivative 
	$f^\prime(y_0^*)\in \mathcal{L}(X^\delta,X)$.
	Also, we have to use a slightly different estimative on the Green function of $\{e^{\mathcal{A}t}:t\geq 0\}$
	\begin{eqnarray*}
		\|G_\mathcal{A}(t,s)\|_{\mathcal{L}(X,X^\delta)}&\leq& D(M,\delta) (t-s)^{-\delta} e^{-\beta|t-s|},  0<t-s\leq 1\\
		\|G_\mathcal{A}(t,s)\|_{\mathcal{L}(X,X^\delta)}&\leq& D(M,\delta) e^{-\beta|t-s|}, \hbox{ otherwise},
	\end{eqnarray*}
	where $D=D(M,\delta)$ is a constant, see \cite{Henry-1}. Under these conditions the proof of existence and continuity of bounded random hyperbolic equilibrium on $X^\delta$ is analogous to the argument used in 
	Theorem \ref{th-existence-random-hyperbolic-solutions-complete}. 
\end{remark}

\subsection{Stochastic perturbations of autonomous problems}\label{subsec-app-stochastic}
\quad
\par In this subsection we apply the results established in this work 
to study stability under a nonautonomous stochastic perturbation of a hyperbolic autonomous problem. We consider the following family of \textit{Stratonovich stochastic 
	differential equations} with a nonautonomous multiplicative 
white noise
\begin{equation}\label{eq-applications-stochastic-perturbation}
dy=\mathcal{B}ydt+f(y)dt +\eta\kappa_ty\circ dW_t, \ t\geq \tau, \ y(\tau)=y_\tau,
\end{equation}
where $\mathcal{B}$ is a generator of a strongly continuous semigroup 
$\{e^{\mathcal{B}t}: t\geq 0 \}$ on $X$, the family
$\{W_t:t\in \mathbb{R}\}$ is the standard Wiener process, see \cite{Arnold}, and
$\kappa:\mathbb{R}\rightarrow \mathbb{R}^+$ is continuously differentiable,
$\eta>0$.
\par We apply the results of Subsection \ref{subsection-randomperturbation-of-autonomous-semilinear-problem}, by a formal change of variable we show how to modify problem \eqref{eq-applications-stochastic-perturbation} to a nonautonomous random differential equations. We prove existence and continuity of \textit{random hyperbolic solutions} for the associated nonautonomous differential equation.
\par The canonical sample space of a Wiener process is
$\Omega:=C_0(\mathbb{R})$ the set of continuous functions over $\mathbb{R}$ which are 
$0$ at $0$ equipped with the compact open topology. We denote $\mathcal{F}$ the associated Borel $\sigma$-algebra. Let $\mathbb{P}$ be the Wiener probability measure on 
$\mathcal{F}$ which is given by the distribution of a two-sided Wiener process with trajectories in $C_0(\mathbb{R})$. The flow $\theta$ is given by the Wiener shifts
\begin{equation*}
\theta_t\omega(\cdot)=\omega(t+\cdot)-\omega(t), \ t\in \mathbb{R}, \ \omega\in \Omega.
\end{equation*}
\par In order to obtain a nonautonomous random differential equation from \eqref{eq-applications-stochastic-perturbation} we consider an auxiliary scalar stochastic differential equation 
\begin{equation}\label{eq-orstein-uhlenbeck-sde}
dz_t+zdt=dW_t.
\end{equation}
This problem has a stationary solution known as the Ornstein-Uhlenbeck process.
\begin{lemma}
	There exists a $\theta$-invariant subset 
	$\tilde{\Omega}\in \mathcal{F}$ of full measure such that
	$\lim_{t\to \pm\infty}\frac{|\omega(t)|}{t}=0, \ \omega\in \tilde{\Omega}$
	and, for such $\omega$, the random variable given by
	\begin{equation*}
	z^*(\omega)=-\int_{-\infty}^0 e^{ s}\omega(s)ds
	\end{equation*}
	is well defined. Moreover,
	for $\omega\in \tilde{\Omega}$, the mapping
	$(t,\omega)\mapsto z^*(\theta_t\omega)$
	is a stationary solution of \eqref{eq-orstein-uhlenbeck-sde}
	with continuous trajectories, and 
	\begin{equation*}\label{eq-sublinear-growth-OU}
	\lim_{t\to \pm\infty}\frac{|z^*(\theta_t\omega)|}{t}=0, \ \forall \,\omega\in \tilde{\Omega}.
	\end{equation*}
\end{lemma}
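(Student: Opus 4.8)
The plan is to carry out the classical construction of the Ornstein--Uhlenbeck process on the canonical Wiener space in four stages: fix the exceptional set $\tilde\Omega$, establish convergence of the defining integral, verify the solution property, and prove the sublinear growth of $t\mapsto z^*(\theta_t\omega)$. First I would set $\tilde\Omega:=\{\omega\in\Omega:\lim_{t\to\pm\infty}|\omega(t)|/t=0\}$. That $\mathbb{P}(\tilde\Omega)=1$ is the only genuinely probabilistic ingredient; it follows from the law of the iterated logarithm for two-sided Brownian motion (equivalently, from the strong law of large numbers applied to the increments $\omega(n)-\omega(n-1)$ together with path continuity), and I would simply invoke \cite{Arnold,Caraballo-Kloeden-Schmalfu} here. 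The $\theta$-invariance of $\tilde\Omega$ is an elementary pathwise estimate: from $(\theta_s\omega)(t)=\omega(t+s)-\omega(s)$ one has $|(\theta_s\omega)(t)|/t\le|\omega(t+s)|/t+|\omega(s)|/t$, and letting $t\to\pm\infty$ with $s$ fixed while using $(t+s)/t\to1$ gives $\theta_s\omega\in\tilde\Omega$.

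Next, for $\omega\in\tilde\Omega$ sublinearity yields a constant $C_\omega$ with $|\omega(s)|\le C_\omega(1+|s|)$ for $s\le0$, so the integrand of $z^*(\omega)=-\int_{-\infty}^0 e^s\omega(s)\,ds$ is dominated by $C_\omega e^s(1+|s|)\in L^1((-\infty,0])$; hence the integral converges absolutely and $z^*$ is well defined. The key computation is the explicit form of the shifted process. Using $(\theta_t\omega)(s)=\omega(t+s)-\omega(t)$, the identity $\int_{-\infty}^0 e^s\,ds=1$, and the change of variable $r=t+s$, I obtain
\begin{equation*}
z^*(\theta_t\omega)=\omega(t)-e^{-t}\int_{-\infty}^t e^{r}\omega(r)\,dr=\int_0^{\infty}e^{-u}\bigl[\omega(t)-\omega(t-u)\bigr]\,du,
\end{equation*}
and continuity of $t\mapsto z^*(\theta_t\omega)$ is then immediate from continuity of $\omega$ and of the convergent integral.

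To verify that this solves \eqref{eq-orstein-uhlenbeck-sde} I would check the integral identity $z^*(\theta_t\omega)=z^*(\omega)-\int_0^t z^*(\theta_s\omega)\,ds+\omega(t)$ directly. Writing $I(t):=\int_{-\infty}^t e^r\omega(r)\,dr$, so that $I'(s)=e^s\omega(s)$, integration by parts gives $\int_0^t e^{-s}I(s)\,ds=-e^{-t}I(t)+I(0)+\int_0^t\omega(s)\,ds$; substituting this into $\int_0^t z^*(\theta_s\omega)\,ds=\int_0^t\omega(s)\,ds-\int_0^t e^{-s}I(s)\,ds=e^{-t}I(t)-I(0)$ reproduces the identity since $z^*(\omega)=-I(0)$. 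Because the noise is additive, the Stratonovich and It\^o formulations coincide, so this pathwise identity is precisely the integral solution property; stationarity then follows because $z^*$ is a single random variable and $\theta$ preserves $\mathbb{P}$.

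Finally, for the sublinear growth I would use the last representation above. Given $\epsilon>0$, sublinearity provides $C_\epsilon$ with $|\omega(r)|\le C_\epsilon+\epsilon|r|$ for all $r$, whence $|\omega(t)|+|\omega(t-u)|\le 2C_\epsilon+2\epsilon|t|+\epsilon u$ for $u\ge0$, and therefore
\begin{equation*}
|z^*(\theta_t\omega)|\le\int_0^\infty e^{-u}\bigl(|\omega(t)|+|\omega(t-u)|\bigr)\,du\le 2C_\epsilon+2\epsilon|t|+\epsilon.
\end{equation*}
Dividing by $|t|$ and letting $t\to\pm\infty$ gives $\limsup|z^*(\theta_t\omega)|/|t|\le2\epsilon$, and since $\epsilon$ is arbitrary the limit is $0$. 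I expect the main difficulty to be purely bookkeeping, namely keeping the integration-by-parts verification of the solution property and the uniform-in-$t$ growth estimate clean, rather than any conceptual obstacle; the only external input is the almost-sure sublinear growth of Brownian paths.
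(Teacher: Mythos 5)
Your proof is correct, and it is the standard construction of the Ornstein--Uhlenbeck process on the canonical Wiener space: the paper itself gives no argument here and simply cites \cite{Caraballo-Kloeden-Schmalfu}, where essentially this same reasoning (pathwise convergence of the defining integral, the identity $z^*(\theta_t\omega)=\omega(t)-e^{-t}\int_{-\infty}^t e^r\omega(r)\,dr$, integration by parts for the integral form of the equation, and the sublinear-growth estimate) is carried out. All steps check out, including the correct isolation of the only probabilistic input, namely the almost sure sublinear growth of two-sided Brownian paths, and the observation that additive noise makes the Stratonovich and It\^o formulations coincide.
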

The proof of the following Lemma can be found on \cite{Caraballo-Kloeden-Schmalfu}.
%	\par This lemma provides an important property of the solution $z^*(\theta_t\omega)$, the sub-linear growth \eqref{eq-sublinear-growth-OU}.
%	\\
\par Let $y$ be a solution for 
\eqref{eq-applications-stochastic-perturbation} and consider 
$v(t,\omega):=e^{-\eta\kappa_tz^*(\theta_t\omega)}y(t,\omega)$. Hence, $v$ has to satisfy the following nonautonomous random differential equation  
\begin{equation*}
\dot{v} =\mathcal{B}v+e^{-\eta\kappa_tz^*(\theta_t\omega)}
f(e^{\eta\kappa_tz^*(\theta_t\omega)}v)
+\eta [\kappa_t-\dot{\kappa}_t] z^*(\theta_t\omega)v,
\end{equation*}
Define $f_\eta(t,\omega,v):=e^{-\eta\kappa_tz^*(\omega)}
f(e^{\eta\kappa_tz^*(\omega)}v)$, and
$B_\eta(t,\omega)v:=\eta [\kappa_t-\dot{\kappa}_t] z^*(\omega)v$.
\par In order to obtain hyperbolic solutions for \eqref{eq-applications-stochastic-perturbation} we study existence of random hyperbolic solutions for
\begin{equation}\label{eq-application-stochastic-nonautonomous-random}
\dot{v}= \mathcal{B}v+f_\eta(\Theta_t\omega_\tau,v)+B_\eta(\Theta_t\omega_\tau) v,
\ t\geq 0, v(0)=v_0\in X.
\end{equation}
\par To apply our results, we have to check hypotheses of Theorem
\ref{th-existence-random-hyperbolic-solutions-complete} to guarantee existence of random hyperbolic solutions \eqref{eq-application-stochastic-nonautonomous-random}.
\par To that end, we choose any differentiable positive real function $\kappa$ for which 
there are random variables $m_1,m_2>0$ such that
\begin{equation*}
m_1(\omega):=\sup_{t\in \mathbb{R}}\{|\kappa_tz^*(\theta_t\omega)|\}<\infty, \hbox{ and }
m_2(\omega):=
\sup_{t\in \mathbb{R}}\{|[\kappa_t-\dot{\kappa}_t]z^*(\theta_t\omega)|\}<\infty.
\end{equation*}
\par Define, for each $\omega_\tau\in \mathbb{R}\times \tilde{\Omega}$,
$c(\tau,\omega):=(\kappa_\tau-
\dot{\kappa}_\tau)z^*(\omega)$,
\begin{equation*}
M(\tau,\omega):= \sup_{t\in \mathbb{R}}\{|c(\Theta_t(\omega_\tau))|\}\leq 
m_2(\theta_{-\tau}\omega).
\end{equation*}
Note that $\sup_{t\in \mathbb{R}}M(\Theta_t\omega_\tau)=M(\omega_\tau).$
Then $
\sup_{t\in\mathbb{R}}\|B_\eta(\Theta_t\omega_\tau)\|_{\mathcal{L}(X)}=\eta M(\omega_\tau).
$
\par Let $y_0^*$ be a hyperbolic equilibrium of $\dot{y}=\mathcal{B}y+f(y)$, and $U$ be a neighborhood of $y_0^*$ such that $f_\eta(\omega_\tau,\cdot)\in C^1(U,X)$, for every $\eta\in [0,1]$ and $\omega_\tau\in \mathbb{R}\times \tilde{\Omega}$. 
We claim that for each $\omega_\tau\in \mathbb{R}\times\tilde{\Omega}$
\begin{equation}
\sup_{(t,x)\in \mathbb{R}\times U}\Big\{\|f_\eta(\Theta_t\omega_\tau,x)-f(x)\|_X+
\|D_xf_\eta(\Theta_t\omega_\tau,x)-f^\prime(x)+B_\eta(\Theta_t\omega_\tau)\|_{\mathcal{L}(X)}\Big\}\to 0,
\end{equation}
as $\eta\to 0$. The derivative of $f_\eta(\Theta_t\omega_\tau,\cdot)$ on $x$ is
\begin{equation*}
D_xf_\eta(\Theta_t\omega_\tau,x)=
f^\prime(e^{\eta\kappa_{t+\tau}z^*(\theta_t\omega)}x).
\end{equation*}
Since, for every $(\tau,\omega)\in \mathbb{R}\times\tilde{\Omega}$ the limit $\eta\sup_{t\in \mathbb{R}}|\kappa_{t+\tau}z^*(\theta_{t}\omega)|$ goes to $0$, as $\eta\to 0$, by continuity 
$
\sup_{t\in \mathbb{R}}|e^{\eta\kappa_{t+\tau}z^*(\theta_t\omega)}-1|\to 0, \hbox{ as } \eta\to 0.$
Similarly, for $f^\prime$ we have that
\begin{equation*}
\sup_{t\in \mathbb{R}}
\|f^\prime(e^{\eta\kappa_{t+\tau}z^*(\theta_t\omega)}x)-f^\prime(x)\|_{\mathcal{L}(X)}\to 0, 
\hbox{ as } \eta \to 0,
\end{equation*}
\begin{eqnarray*}
	\|f_\eta(\Theta_{t}\omega_\tau,x)-f(x)\|_X&\leq& 
	e^{-\eta\kappa_{t+\tau}z^*(\theta_t\omega)}\|f(e^{\eta\kappa_{t+\tau}z^*(\theta_t\omega)}x)-f(x)\|_X\\
	&+&
	|e^{-\eta\kappa_{t+\tau}z^*(\theta_t\omega)}-1|\|f(x)\|_X,
\end{eqnarray*}
and both terms on the right side go to zero when $\eta\to 0$, uniformly on $t$.
Therefore, for each $\omega_\tau\in \mathbb{R}\times \tilde{\Omega}$ we have that
$\sup_{s\in \mathbb{R}}\lambda(\eta,\Theta_s\omega_{\tau})=\lambda(\eta,\omega_\tau)\to 0, \hbox{ as } \eta\to 0.$
Hence, we can apply Theorem \ref{th-existence-random-hyperbolic-solutions-complete} to 
\eqref{eq-application-stochastic-nonautonomous-random} to prove existence and continuity of hyperbolic solutions.
\begin{theorem}\label{th-existence-random-hyperbolic-solutions-complete-stochastic}
	Let $y_0^*$ be a hyperbolic equilibrium for $\dot{y}=\mathcal{B}y+f(y)$. 
	Given $\epsilon>0$ small enough, there exists a $\Theta$-invariant map
	$\eta_\epsilon:\mathbb{R}\times\Omega\to (0,1]$ and a global solution $\zeta_{\epsilon}^*:\mathbb{R}\times \Omega\to X$ of 
	$(\psi_{\eta_\epsilon},\Theta)$ defined by
	\begin{equation*}
	\zeta_{\epsilon}^*(\omega_{\tau}):=\zeta_{\eta_\epsilon(\omega_\tau)}^*(\omega_\tau)	\end{equation*} 
	such that $\zeta_{\epsilon}^*$ is a random hyperbolic solution of $(\psi_{\eta_\epsilon},\Theta)$, with 
	\begin{equation*}
	\sup_{t\in \mathbb{R}}\|\zeta_{\epsilon}^*(\Theta_t\omega_\tau)-y_0^*\|_X<\epsilon.
	\end{equation*}
	Moreover, $\zeta_{\epsilon}^*$ defines a stochastic process, i.e., 
	the mapping $\tilde{\Omega}\ni \omega\mapsto \zeta_{\epsilon}^*(t,\omega)\in X$ is $(\tilde{\mathcal{F}},\mathcal{B}_X)$-measurable, for each $t\in \mathbb{R}$.
\end{theorem}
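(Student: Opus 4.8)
The plan is to split the proof into two parts: the existence, continuity and hyperbolicity of $\zeta_\epsilon^*$, which is a direct instance of the abstract theory, and the new measurability assertion.

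For the first part I would simply invoke Theorem \ref{th-existence-random-hyperbolic-solutions-complete} applied to problem \eqref{eq-application-stochastic-nonautonomous-random}, with generator $\mathcal{B}$, nonlinearity $f_\eta$ and linear term $B_\eta$ as defined above. The computations preceding the statement verify precisely hypotheses \eqref{eq-fundamental-hypothesis-sthocastic-perturbation} and \eqref{eq-fundamental-hypothesis-contisouly-diferentiability}: the sublinear growth of the Ornstein--Uhlenbeck process together with the boundedness of $\kappa_t z^*(\theta_t\omega)$ and $[\kappa_t-\dot\kappa_t]z^*(\theta_t\omega)$ yields $\sup_{s\in\mathbb{R}}\lambda(\eta,\Theta_s\omega_\tau)=\lambda(\eta,\omega_\tau)\to 0$ as $\eta\to 0$, while $\sup_{t\in\mathbb{R}}\|B_\eta(\Theta_t\omega_\tau)\|_{\mathcal{L}(X)}=\eta M(\omega_\tau)\to 0$. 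Hence Theorem \ref{th-existence-random-hyperbolic-solutions-complete} furnishes a $\Theta$-invariant map $\eta_\epsilon$ and the global solution $\zeta_\epsilon^*$ with the stated estimate $\sup_{t\in\mathbb{R}}\|\zeta_\epsilon^*(\Theta_t\omega_\tau)-y_0^*\|_X<\epsilon$, together with an exponential dichotomy for the linearization along $\zeta_\epsilon^*$, so that $\zeta_\epsilon^*$ is a random hyperbolic solution.

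The genuinely new content is the ``moreover'' claim that $\omega\mapsto\zeta_\epsilon^*(t,\omega)$ is $(\tilde{\mathcal{F}},\mathcal{B}_X)$-measurable for each fixed $t$. Since $\zeta_\epsilon^*(\Theta_t\omega_\tau)=\xi_\epsilon^*(t,\omega_\tau)=\phi_\epsilon^*(t,\omega_\tau)+y_0^*$, where $\phi_\epsilon^*(\cdot,\omega_\tau)$ is the unique fixed point of $\mathcal{I}_{\omega_\tau,\eta_\epsilon(\omega_\tau)}$ in $\mathfrak{X}_\epsilon$ constructed in the proof of Theorem \ref{th-existence-random-hyperbolic-solutions}, it suffices to show that this fixed point depends measurably on $\omega$. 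I would mimic the measurability scheme used at the end of the proof of Theorem \ref{th-discrete-robstness-random-perturbations-of-autonomous-nonuniform-case}: because $\mathcal{I}_{\omega_\tau,\eta_\epsilon(\omega_\tau)}$ is a contraction on $\mathfrak{X}_\epsilon$ with constant $\tfrac12$, its fixed point is the uniform limit of the iterates $\Psi_n(\cdot,\omega_\tau):=\mathcal{I}_{\omega_\tau,\eta_\epsilon(\omega_\tau)}^n(0)$ starting from the zero function. I would then prove by induction on $n$ that for each fixed $t$ the map $\omega\mapsto\Psi_n(t,\omega_\tau)$ is measurable, and conclude measurability of the pointwise limit $\omega\mapsto\phi_\epsilon^*(t,\omega_\tau)$.

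The crux of the induction step, and the main obstacle, is the joint measurability in $(s,\omega)$ of the integrand $G_\mathcal{A}(t,s)\,g_{\eta_\epsilon(\omega_\tau)}(\Theta_s\omega_\tau,\Psi_n(s,\omega_\tau))$, after which measurability of the integral in $\omega$ follows from a standard parametrized-integral (Fubini) argument. All the $\omega$-dependence enters through $z^*(\theta_s\omega)$ and through the measurable $\Theta$-invariant scalar $\eta_\epsilon(\omega_\tau)$, so I would first record that $(s,\omega)\mapsto z^*(\theta_s\omega)$ is jointly measurable, which is immediate from the Ornstein--Uhlenbeck construction, whose trajectories are continuous in $s$ and which is $\mathcal{F}$-measurable in $\omega$. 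Since $f$, $\kappa$ and the exponential are continuous and $\Psi_n(s,\omega_\tau)$ is measurable in $\omega$ by the inductive hypothesis, the explicit formula $f_\eta(\Theta_s\omega_\tau,v)=e^{-\eta\kappa_{s+\tau}z^*(\theta_s\omega)}f(e^{\eta\kappa_{s+\tau}z^*(\theta_s\omega)}v)$, evaluated at $\eta=\eta_\epsilon(\omega_\tau)$, shows that $g_{\eta_\epsilon(\omega_\tau)}(\Theta_s\omega_\tau,\Psi_n(s,\omega_\tau))$ is jointly measurable, the deterministic continuous factor $G_\mathcal{A}(t,s)$ leaving this unaffected. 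Integrating in $s$ and passing to the uniform limit in $n$ then transfers measurability to $\phi_\epsilon^*$, hence to $\zeta_\epsilon^*$, completing the proof.
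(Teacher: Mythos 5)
Your proposal follows essentially the same route as the paper: the existence, closeness and hyperbolicity of $\zeta_\epsilon^*$ are obtained by invoking Theorem \ref{th-existence-random-hyperbolic-solutions-complete} after the hypothesis checks carried out before the statement, and the measurability is obtained by writing the fixed point of $\mathcal{I}_{\omega_\tau,\eta_\epsilon(\omega_\tau)}$ as the limit of its iterates starting from the zero function and checking that each iterate is measurable in $\omega$. Your write-up merely spells out the induction step (joint measurability of $(s,\omega)\mapsto z^*(\theta_s\omega)$ and the Fubini argument) that the paper leaves implicit.
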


\begin{proof}
	Given $\epsilon>0$ small enough, thanks to Theorem \ref{th-existence-random-hyperbolic-solutions-complete}, we know that for every $\omega_\tau\in \mathbb{R}\times\tilde{\Omega}$ there exists a $\Theta$-invariant map $\eta_\epsilon:\mathbb{R}\times \tilde{\Omega}\to (0,1]$ such that 
	$\zeta_{\epsilon}^*:\mathbb{R}\times\tilde{\Omega}\to X$ is a random hyperbolic solution for
	\eqref{eq-application-stochastic-nonautonomous-random}.
	\par Let us prove that $\zeta_{\epsilon}^*(t,\cdot)$ is $(\tilde{\mathcal{F}},\mathcal{B}_X)$-measurable. 
	Consider $\phi_{\eta_\epsilon(\omega_\tau)}\in \mathfrak{X}_\epsilon$
	the fixed point of operator $\mathcal{I}_{\omega_\tau,\eta_{\epsilon}(\omega_\tau)}$, see the proof of Theorem \ref{th-existence-random-hyperbolic-solutions}.
	\par We claim that the mapping
	$\tilde{\Omega}\ni\omega\mapsto \phi_{\eta_\epsilon(\omega_\tau)}(0,\omega_\tau)$ is 
	$(\tilde{\mathcal{F}},\mathcal{B}_X)$-measurable, where $\tilde{\mathcal{F}}$ is the $\sigma$-algebra of $\tilde{\Omega}$.
	Indeed, since
	$\phi_{\eta_\epsilon(\omega_\tau)}(0)=\mathcal{I}_{\omega_\tau,\eta_{\epsilon}(\omega_\tau)}(\phi_{\eta_\epsilon(\omega_\tau)})(0),$
	we know that 
	\begin{equation*}
	\phi_{\eta_\epsilon(\omega_\tau)}(0)=\lim_{n\to +\infty}\mathcal{I}_{\omega_\tau,\eta_{\epsilon}(\omega_{\tau})} ^n(\tilde{0})(0)
	\end{equation*}
	where $\tilde{0}\in \mathfrak{X}_\epsilon$.
	Thus, the claim follows from the fact that, for each $\tau \in \mathbb{R}$, the mapping 
	$\omega\mapsto \mathcal{I}_{\omega_\tau,\eta_{\epsilon}(\omega_{\tau})}(\tilde{0})(0)$ 
	is $(\tilde{\mathcal{F}},\mathcal{B}_X)$-measurable.
	\par Therefore, for each $\tau\in \mathbb{R}$ fixed, the mapping $\tilde{\Omega}\ni\omega\mapsto\zeta_{\epsilon}^*(\omega_\tau):=\phi_{\eta_{\epsilon}\omega_\tau}^*(0,\omega_\tau)+y_0^*$ is $(\tilde{\mathcal{F}},\mathcal{B}_X)$-measurable, and the proof is complete.
\end{proof}

\par To end this section we present two special examples: the linear case, and an application in partial differential equations.
\begin{example}
	If we assume that $\mathcal{A}$ generates a  $C_0$-semigroup satisfying conditions of Theorem \ref{th-robustness-nonautonomous-random-perturbation-of-autonomous}, then $y=0$ is a random hyperbolic solution for 
	\begin{equation*}
	dy=\mathcal{A}ydt +\eta\kappa_ty\circ dW_t, \ t\geq \tau,
	\end{equation*}
	for suitable small $\eta>0$.
	In fact, the associated nonautonomous random differential equation is
	\begin{equation*}
	\dot{v} =\mathcal{A}v+
	\eta[\kappa_t-\dot{\kappa}_t] z^*(\theta_t\omega)v, \ t\geq \tau.
	\end{equation*}
	Thus, by Theorem \ref{th-robustness-nonautonomous-random-perturbation-of-autonomous}, for every $\epsilon>0$ suitable small there exists a $\Theta$-invariant 
	$\eta_\epsilon:\mathbb{R}\times\Omega\to (0,1]$ such that
	$A_\epsilon(\omega_\tau)=\mathcal{A}
	+ \eta_\epsilon(\omega_{\tau})[\kappa_\tau-\dot{\kappa}_{\tau}] z^*(\omega)v$ generates a linear nonautonomous random dynamical system that admits an exponential dichotomy.
\end{example}	

%{\color{red}
%\par Now, we show how to obtain existence and continuity of stochastic hyperbolic solution for \eqref{eq-applications-stochastic-perturbation}.
%Let $\eta_\epsilon:\mathbb{R}\times\Omega$ be the $\Theta$-invariant map obtained on Theorem \ref{th-existence-random-hyperbolic-solutions-complete-stochastic}, for a given $\epsilon>0$ suitable small, and $\zeta_{\eta_\epsilon}^*$ the random hyperbolic solution of 
%$(\psi_{\eta_\epsilon},\Theta)$. 
%Define, for each $\omega\in \tilde{\Omega}$,
%\begin{equation*}
%\xi_\epsilon^*(t,\omega):=e^{\eta_{\epsilon}(\omega_0)\kappa_{t}z^*(\theta_t\omega)}\zeta_{\eta_\epsilon(\omega_0)}^*(t,\theta_t\omega).
%\end{equation*}
%We claim that $\xi_\epsilon^*(\cdot,\omega
%):\mathbb{R}\to X$ is a complete solution for
%\begin{equation}
%dy=\mathcal{A}ydt+f(y)dt +\eta_\epsilon(\omega)\kappa_ty\circ dW_t,
%\end{equation}
%such that $\zeta_{\eta_\epsilon(\omega_0)}^*(t,\theta_t\omega)=	e^{-\eta_{\epsilon}(\omega_0)\kappa_{t}z^*(\theta_t\omega)}\xi_\epsilon^*(t,\omega)$, where $\zeta_{\eta_\epsilon}^*$ is a random hyperbolic solution.
%\par Notice that $\xi_\epsilon^*(t,\cdot)$ is a $(\tilde{\mathcal{F}}, \mathcal{B}_X)$-measurable, thus it defines a stochastic process such that 
%$\xi^*(t,\omega)=e^{-\eta\kappa_tz^*(\theta_t\omega)}
%\zeta^*_\eta(t,\omega)$ is a random hyperbolic solution for
%\eqref{eq-application-stochastic-nonautonomous-random}. 
%Therefore, $\xi_\epsilon^*$ is a stochastic hyperbolic solution of \eqref{eq-application-stochastic-nonautonomous-random}.
%}

\par We now apply Theorem \ref{th-existence-random-hyperbolic-solutions-complete-stochastic}
to a \textit{strongly damped wave equation}.
\begin{example}
	Let $D$ be a bounded smooth domain in $\mathbb{R}^3$,
	$A:D(A)\subset L^2(D)\to L^2(D)$ is $-\Delta$ with Dirichlet boundary condition,
	and $f:\mathbb{R}\to \mathbb{R}$ is twice differentiable.
	Consider the damped wave equation 
	\begin{equation*}
	u_{tt}+\beta u_t-\Delta u=f(u), \hbox{ in } D
	\end{equation*}
	with boundary condition $u=0$, in $\partial D$. The initial data will be taken in
	the space $X=H^1_0(D)\times L^2(D)$. 
	Hence we can consider an abstract evolutionary equation in $X$:
	\begin{equation}\label{example-eq-evolutionary-wave-eq}
	\dot{y}=\mathcal{B}y+ F(y),
	\end{equation}
	where
	\begin{equation*}
	y=\begin{pmatrix}
	y_1\\
	y_2 
	\end{pmatrix}\in X, \ \ 
	\mathcal{B}=\begin{pmatrix}
	0 & I \\
	-A & -\beta 
	\end{pmatrix}, \ \ 
	F(y)=\begin{pmatrix}
	0 \\
	f^e(y_1) 
	\end{pmatrix},
	\end{equation*}
	with $f^e:H_0^1(D)\to L^2(D)$ is given by $f^e(y_1)(x)=f(y_1(x))$ for $x\in D$. 
	Moreover, $f^e$ is continuously differentiable on the second variable, see \cite{Arrieta-Carvalho-Hale-92}.
	The hyperbolic equilibrium points of \eqref{example-eq-evolutionary-wave-eq} are of the form
	$y_0^*=(u_0^*,0)$ where $u_0^*$ is a solution of 
	\begin{equation*}
	-\Delta u=g_0(u),
	\end{equation*} 
	such that $0\notin \sigma(-\Delta+D_xf^e(u_0^*)Id_X)$.
	Now we consider a nonautonomous multiplicative white noise on \eqref{example-eq-evolutionary-wave-eq}, i.e.,
	\begin{equation}\label{example-eq-evolutionary-wave-eq-noise}
	\dot{y}=\mathcal{B}y+ F(y)+\tilde{\eta}\kappa_{t} y\circ dW_t,
	\end{equation}
	where
	\begin{equation*}
	\tilde{\eta}=\begin{pmatrix}
	\eta & 0 \\
	0 & 0
	\end{pmatrix},\ \
	\begin{pmatrix}
	0 & 0 \\
	0 & \eta
	\end{pmatrix}  \hbox{ or }
	\begin{pmatrix}
	\eta & 0 \\
	0 & \eta
	\end{pmatrix},
	\end{equation*}
	and we may choose where to consider the noise, on the position $u$, or velocity $u_t$ or both.
	\par  Let $A$ be a $2\times 2$ - matrix with real coefficients, \textit{the exponential of $A$} is represented by $\textbf{e}^A$.
	Let $y$ be a solution of \eqref{example-eq-evolutionary-wave-eq-noise}, then 
	apply the change of variables
	$v(t,\omega)=\textbf{e}^{-\tilde{\eta}\kappa_tz^*(\theta_t\omega)}y$ to obtain
	\begin{equation}\label{example-eq-evolutionary-wave-eq-random}
	\dot{v}=\mathcal{B}v+ F_\eta(\Theta_t\omega_\tau,v)+\tilde{\eta}(\kappa_{t} -\dot{\kappa_t})z^*(\theta_t\omega)v,
	\end{equation}
	where $F_\eta(\omega_\tau,v):=\textbf{e}^{\tilde{\eta}\kappa_\tau z^*(\omega)}F(\textbf{e}^{-\tilde{\eta}\kappa_\tau z^*(\omega)}v)$. 
	Note that, for each $\eta\in [0,1]$ and $\omega_\tau\in \mathbb{R}\times \Omega$ fixed,
	problem \eqref{example-eq-evolutionary-wave-eq-random}
	is well-posed, see Arrieta \textit{et al.} \cite{Arrieta-Carvalho-Hale-92}.
	\par Now, we apply Theorem
	\ref{th-existence-random-hyperbolic-solutions-complete-stochastic}
	for \eqref{example-eq-evolutionary-wave-eq-random} to ensure that 
	for each hyperbolic equilibrium $y_0^*$ and $\epsilon>0$
	there exists a random hyperbolic solution that defines stochastic processes $\{\xi^*_\epsilon(t,\cdot):\tilde{\Omega}\to X:\,t\in \mathbb{R} \}$ for each $\epsilon> 0$ such that $\sup_{t\in \mathbb{R}}\|\xi^*_\epsilon(\Theta_t\omega_\tau)-y_0^*\|\to 0$ as $\epsilon\to 0$.
\end{example}

\begin{remark}
	We note that it is not clear yet if the dynamical systems generated by \eqref{eq-autonomous-perturbation-by-nonautonomous-random} and 
	\eqref{eq-applications-stochastic-perturbation} are conjugated. This is not a simple task because our change of variables is not stationary because of the presence of $\kappa_t$.
	However, for every solution of
	\eqref{eq-application-stochastic-nonautonomous-random} it is possible to find a corresponding solution for the nonautonomous random differential equations 
	\eqref{eq-autonomous-perturbation-by-nonautonomous-random} and we analyze properties of 
	the later. 
	More precisely, we know now under which conditions it is possible to guarantee that global solutions of \eqref{eq-application-stochastic-nonautonomous-random} exhibit a hyperbolic behavior.
\end{remark}

\begin{remark}
	The study of this subsection could be applied to \textit{It\^o stochastic differential equations} with multiplicative white noise, by modifying to a Stratonovich stochastic differential equation, whenever is possible.
\end{remark}

\section{Conclusions}
\par The results of this paper on the robustness of exponential dichotomies can be extended to nonuniform or tempered, as it is done in \cite{Barreira-Dragicevi-Valls,Zhou-Lu-Zhang-1}. Since our goal was to study the effect of a small bounded noise on autonomous problems, it was expected to obtain nonuniform behavior on our hyperbolicity. We reinforce that to consider a bounded noise is sensibles in real life applications \cite{Caraballo-Colucci-Cruz,Caraballo-Garrido-Cruz,Carr}, and it was a crucial to prove permanence of the hyperbolicity. 
This is an important advance in order to understand the effect of random influences on deterministic evolutionary differential equations.
\par The robustness of exponential dichotomy for nonautonomous random dynamical systems is a fundamental property in the study of stability results for random dynamics. We were able to study hyperbolicity for nonautonomous random differential equations and obtained global solutions that behave as hyperbolic equilibria. 
As in \cite{Bortolan-Carvalho-Langa,Carvalho-Langa-2,Carvalho-Langa-Robinson} this is an important step in order to obtain continuity and structure stability of attractors on nonautonomous deterministic contexts. 
\par Hence, to understand structural stability of global attractors under random perturbations, next step is to prove the existence and continuity of invariant manifolds and study lower semi-continuity of attractors for nonautonomous random dynamical systems. This will be pursuit in a forthcoming paper. 
\par The results of these paper can also by applied for general non-compact random dynamical systems, see \cite{Bixiang-Wang-existence} for a formal definition. The results presented in this work have opened the discussion of: exponential dichotomies for co-cycles driving by noncompact symbol spaces;
bounded noises in order to preserve hyperbolic structure; and possibly also continuity and structure stability of attractors under random perturbations. 

\section*{Acknowledgments}
\par This work was carried out while Alexandre Oliveira-Sousa visited the Dpto. Ecuaciones Diferenciales y An\'alisis Num\'erico (EDAN), Universidad de Sevilla and he wants to acknowledge the warm reception from people of EDAN.
%, specially, to Tom\'as Caraballo and Jos\'e A. Langa. 
We acknowledge the financial support from the following institutions: T. Caraballo and J. A. Langa by Ministerio de Ciencia, Innovaci\'on y Universidades (Spain), FEDER (European Community) under grant PGC2018-096540-B-I00, and by Proyecto I+D+i Programa Operativo FEDER Andaluc\'{\i}a US-1254251; A. N. Carvalho by S\~ao Paulo Research Foundation (FAPESP) grant 2018/10997-6, CNPq grant 306213/2019-2, and FEDER - Andalucía P18-FR-4509; and A. Oliveira-Sousa by S\~ao Paulo Research Foundation (FAPESP) grants 2017/21729-0 and 2018/10633-4, and CAPES grant PROEX-9430931/D.

\bibliographystyle{abbrv}
\bibliography{references}

\end{document}